\newtheorem{theorem}{Theorem}[section]
\newtheorem{lemma}[theorem]{Lemma}
\newtheorem{corollary}[theorem]{Corrolary}
\theoremstyle{definition}
\newtheorem{definition}[theorem]{Definition}
\newtheorem{proposition}[theorem]{Claim}
\newtheorem{assumption}[theorem]{Assumption}
\newtheorem{example}[theorem]{Example}
\newtheorem{conjecture}[theorem]{Hypothesis}
\newtheorem{note}[theorem]{Note}
\newtheorem{nthm}{Theorem}[section]
\newtheorem{ncnj}{Conjecture}[section]
\DeclareMathOperator{\Hom}{Hom}
\DeclareMathOperator{\Sing}{Sing}
\DeclareMathOperator{\Spec}{Spec}
\DeclareMathOperator{\Res}{Res}
\DeclareMathOperator{\Fin}{Fin}
\DeclareMathOperator{\Ob}{Ob}
\DeclareMathOperator{\Or}{Or}
\DeclareMathOperator{\Aut}{Aut}
\DeclareMathOperator{\tSet}{tSet}
\DeclareMathOperator{\ssSet}{ssSet}
\DeclareMathOperator{\red}{red}
\DeclareMathOperator{\ad}{ad}
\DeclareMathOperator{\im}{Im}
\DeclareMathOperator{\const}{const}
\newcommand{\lExt}{\text{\textit{Ext}}}
\newcommand{\lRHom}{\text{\textit{RHom}}}
\begin{document}
\author{Lev Soukhanov \\ \tiny{National Research University Higher School of Economics}}
\title{A Surface Degeneration With Non-Collapsible Contractible Dual Complex}
\pagestyle{plain}
\maketitle
\begin{abstract}
We construct a smoothing of a particular normal crossing surface, which dual complex is a duncehat, thus obtaining an example of a surface degeneration with non-collapsible, contractible dual complex. The general fiber of this family has $h^{1,0} = h^{2,0} = 0$ and $h^{1,1}=9$, we conjecture that it is in the deformation class of the Barlow surface. This text is a complete rewrite of author's previous work \cite{LS}.
\end{abstract}

\section{Introduction}

This paper is a cleaned up and improved version of author's preprint \cite{LS}. It features more straightforward version of the main theorem and various improvements and clarifications.

\begin{subsection}{Main question} Let use denote by $D \subset \mathbb{C}$ the unit holomorphic disk and consider an analytic family $\pi: \tilde{X} \rightarrow D$ of complex surfaces over a disk, with smooth total space, with central fiber $X = \pi^{-1}(0)$ being a normal crossing surface. Denote by $X^{\varepsilon} = \pi^{-1}(\varepsilon)$ the nearby fiber, by $X_1, \cdots, X_n$ the irreducible components of $X$, and by $\hat{X}_1, \cdots, \hat{X}_n$ their normalizations.

Suppose $\hat{X}_i$ are rational surfaces, and normalizations of components of the singular locus are rational curves (we call such degeneration \textbf{geometrically maximal}). Then, the \textbf{dual complex} of the special fiber $X$ is intimately related to the topology of the general fiber - in particular, if the dual complex has trivial rational cohomology, then the general fiber has $h^{1,0}(X^{\varepsilon})=h^{2,0}(X^{\varepsilon})=0$. By the results of Kollar, Xu and de Fernex \cite{dFKX}, Theorem 4, if the general fiber was rational, then the dual complex needs to be contractible and even \textbf{collapsible}, which poses an interesting question of building (from smoothing techniques) examples of degenerations with dual complex being contractible but not collapsible. It would give (possibly new) surfaces with $h^{1,0} = h^{2,0} = 0$.
\end{subsection}

\begin{subsection}{Differences with earlier versions} As this text is an overhaul of the previous version of the paper, it might be worth stating the differences with the previous version. The main difference is the different proof of surjectivity of the map $O$ - instead of dealing with the asymptotics we just calculate it explicitly enough. We also give independent proofs of all the results we use for the smoothing of normal crossing spaces instead of referring to the literature, which, sadly, mostly written in the assumptions of simple normal crossing. Since the time we writing previous text, we have also managed to prove that our example is actually simply-connected, which we only suspected at that time.
\end{subsection}

\begin{subsection}{Purpose and organization of the paper}
This paper presents an example (first explicit, to our knowledge) of a geometrically maximal degeneration with non-collapsible contractible dual complex (topological duncehat), which answers affirmatively to the question of \cite{dFKX} on whether such degenerations exist. The generic fiber of this degeneration turns out to be some simply-connected surface of general type with $h^{1,1} = 9$ - same as Barlow's surface \cite{B}. We conjecture that it is in a deformation class of Barlow's surface. We would like to also point out that there are constructions which produce simply-connected surfaces with $p_g = 0$ by smoothing of isolated $\mathbb{Q}$-Gorenstein singularities \cite{LYP}. It would be extremely interesting to find out whether these constructions are related to the present work - in particular, what are the dual complexes of SNC-resolutions of these families.

Our initial motivation for this construction was an attempt to understand the extent to which the two-dimensional complexes play the role similar to those of graphs in the enumerative geometry of surfaces. This understanding is still yet to be achieved.

Another motivation is the fact that this construction could, in principle, provide us with the new examples of surfaces with $h^{1,0}=h^{2,0}=0$.

We prefer to work in the assumptions which are slightly weaker than most of the works in the field - i.e., in the literature the degeneration is commonly assumed to be simple normal crossing, while we prefer to work with just normal crossing. Due to this, and for readers and our own convenience we tried to keep the exposition self-contained when possible. We would like to point out that while in principle all constructions can be done in simple normal crossing case the combinatorics of the problem tend to become very messy while doing so.
\newline

The text consists of the following next parts:

~

\noindent \textbf{Notation} - we state the problem precisely, introduce our assumptions and fix the terminology and notation for triangulated spaces and dual complexes.

~

\noindent \textbf{Smoothing} - we present our view on the deformation theory of the normal crossings (which largely follows Friedman, but we explain the relation to the logarithmic deformation theory) and reduce the smoothing problem to the calculation of the fiber of a certain obstruction mapping. The main result of this part is the following observation, known in the slightly different form already to Friedman:

~
    
\noindent For the normal crossing surface $S$ which satisfies our geometric maximality assumption denote by $M_S$ the moduli space of locally trivial deformations of $S$, by $J_S$ the space of line bundles on the singular locus of $S$ (it can be naturally identified for all locally trivial deformations of $S$), by $O: M_S \longrightarrow J_S$ the map which sends the normal crossing surface to its first tangent cohomology $T^1(S)$, considered as a line bundle on its singular locus. Assume also that locally trivial deformations of $S$ are unobstructed.
\begin{nthm}[Corollary \ref{cor:moduli-smooth}]Suppose that the fiber $O^{-1}(\mathcal{O}_{\Sing(S)})$ is smooth as a subscheme of $M_S$ in a point corresponding to $S$. Then, $S$ is smoothable.
\end{nthm}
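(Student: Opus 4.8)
The plan is to deduce smoothability from the deformation-theoretic criterion of the Smoothing section, in which the smoothing problem is reduced to a statement about the fiber of the obstruction mapping $O$; the role of the present hypothesis is to supply the vanishing that this reduction calls for. The first remark is that, since $[S]$ lies on $O^{-1}(\mathcal{O}_{\Sing(S)})$, we have $T^1(S)\cong\mathcal{O}_{\Sing(S)}$, i.e. $S$ is $d$-semistable. Hence $T^1(S)$ admits a nowhere-vanishing global section, and (by the analysis of the first tangent cohomology in the Smoothing section, equivalently by the log structure that $d$-semistability puts on $S$) this section is the image, under the edge map $\mathrm{Ext}^1(\Omega_S,\mathcal{O}_S)\to H^0(\Sing(S),T^1(S))$, of a genuine first-order deformation $\xi_1$ of $S$ that is a first-order smoothing --- its local models are $\{xy=t\}$ along the double curves and $\{xyz=t\}$ at the triple points. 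By the reduction of the section it then suffices to extend $\xi_1$ to a compatible tower of smoothings over $\mathbb{C}[t]/(t^{n+1})$ for every $n$; a limit argument together with Artin approximation yields an analytic smoothing over the disk with smooth total space and smooth general fiber.

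To run the extension I would track the obstruction at each order. Because $S$ is locally a hypersurface we have $\mathcal{E}xt^{\ge 2}(\Omega_S,\mathcal{O}_S)=0$, so the local-to-global spectral sequence presents the obstruction group $\mathrm{Ext}^2(\Omega_S,\mathcal{O}_S)$ as an extension of $H^1(\Sing(S),T^1(S))=H^1(\Sing(S),\mathcal{O}_{\Sing(S)})=T_{[\mathcal{O}]}J_S$ by a quotient of $H^2(\Theta_S)$. The obstruction to passing from order $n$ to order $n+1$ for a smoothing therefore splits into two pieces. The piece in the quotient of $H^2(\Theta_S)$ is, up to the ambiguity in the construction, a locally trivial obstruction, and can be removed by a locally trivial correction --- here one uses that locally trivial deformations of $S$ are unobstructed, so that the correcting deformation can be performed to the required order. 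The piece in $T_{[\mathcal{O}]}J_S$ is, as the Smoothing section makes precise, a coboundary: it lies in the image of the differential $dO_{[S]}\colon T_{[S]}M_S=H^1(\Theta_S)\to H^1(\mathcal{O}_{\Sing(S)})$, and in fact it measures the obstruction to keeping the smoothing inside the locus $O^{-1}(\mathcal{O}_{\Sing(S)})$ (where the relative $T^1$ stays trivial). Smoothness of $O^{-1}(\mathcal{O}_{\Sing(S)})$ at $[S]$ says precisely that these obstructions vanish and that the corresponding tower of corrections --- tangent to the smooth germ $O^{-1}(\mathcal{O}_{\Sing(S)})$, hence themselves unobstructed --- can be assembled; so at every order the smoothing obstruction is killed, and the tower $(\xi_n)$ exists.

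The step I expect to be the real obstacle is the matching invoked above: one must show that, after the locally trivial correction has disposed of the $H^2(\Theta_S)$-part, the residual obstruction to extending the $n$-th order smoothing is genuinely the $n$-th obstruction of the deformation problem ``deform $S$ inside $O^{-1}(\mathcal{O}_{\Sing(S)})$'', whose tangent space is $\ker(dO_{[S]})$ and which is unobstructed by the smoothness hypothesis. Making this precise requires the comparison between scheme-theoretic and logarithmic deformations developed in the Smoothing section, together with honest bookkeeping of the non-simple-normal-crossing combinatorics at the triple points, where the versal smoothing is $\{xyz=t\}$ rather than $\{xy=t\}$. Granting the matching, the two hypotheses combine to make the smoothing deformation functor unobstructed in the smoothing direction, $\xi_1$ extends to all orders, and $S$ is smoothable.
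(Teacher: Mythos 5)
Your overall strategy is the right one and is the same in spirit as the paper's, but you have left the decisive step as an acknowledged gap, and that step is exactly where the paper's proof lives. The paper's argument for this corollary is two lines: (i) inspecting the proof of Theorem \ref{thm:txlog}, the differential of $O$ at the point $[S]$ is the map $\ad_{\xi}\colon H^1(T_S^0)\to H^1(T_S^1)$; (ii) smoothness of the fiber $O^{-1}(\mathcal{O}_{\Sing(S)})$ at $[S]$ (implicitly: smoothness of the expected codimension $\dim J_S$) forces that differential to be surjective, and surjectivity of $\ad_{\xi}$ is precisely the hypothesis of Theorem \ref{thm:log-def}, which produces the smoothing. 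Your order-by-order tower is the unpacked version of Theorem \ref{thm:log-def}, but you never isolate the identification $dO_{[S]}=\ad_{\xi}$, which is the only genuinely new content of the corollary; instead you use the smoothness hypothesis somewhat diffusely (``these obstructions vanish''), whereas what it actually buys is the single linear-algebra fact that $\ad_{\xi}$ is onto $H^1(T_S^1)$. Note also that ``fiber smooth at a point'' does not by itself imply ``differential surjective'' (a constant map has smooth fibers); you, like the paper, need the fiber to be smooth of codimension $\dim J_S$.

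The step you flag as ``the real obstacle'' --- matching the residual obstruction in $H^1(T_S^1)$ with the obstruction theory of the locus where $T^1$ stays trivial --- is a genuine gap in your write-up, and it does not resolve itself by bookkeeping alone. The paper closes it not by comparing two deformation functors order by order, but by exploiting the algebra structure on $H^*(T_S)$: the bracket respects the filtration by $H^*(T_S^0)$, so in coordinates $(x,t)$ adapted to the splitting $H^1(T_S)=H^1(T_S^0)\oplus H^0(T_S^1)$ the quadratic part of the Kuranishi map is $(\sum_i\ell_ix_i)t+rt^2$ with $\sum_i\ell_ix_i=\ad_{\xi}$; unobstructedness of locally trivial deformations (Assumption \ref{as:local-unobstructed}) shows the full Kuranishi series $K$ is divisible by $t$; and $K/t$ then has surjective linear part, so the formal implicit function theorem (rather than an inductive obstruction-killing) gives a smooth branch transverse to $\{t=0\}$, i.e.\ a smoothing direction. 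If you are permitted to quote Theorem \ref{thm:log-def} as established earlier in the section, your proof should collapse to the paper's two sentences; if you intend to reprove it, you must actually supply the filtration-compatibility of the bracket and the divisibility of $K$ by $t$, since without them the claim that the $H^1(T_S^1)$-component of the $n$-th obstruction is ``the $n$-th obstruction of deforming inside $O^{-1}(\mathcal{O}_{\Sing(S)})$'' is unsubstantiated.
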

It is clear that if the fiber is not smooth, but reduced, we can find the locally trivial deformation which is smoothable. We also prove the slight improvement of the result above - the case when the fiber is not reduced but has an expected dimension (Th. \ref{thm:cool-smooth}) This improvement is not needed in what follows but might turn out to be useful later. \\~\\~
\textbf{Example} - we introduce our degeneration, the special fiber $X$.\\~\\~
\textbf{Obstruction mapping} - we calculate the obstruction explicitly, thus proving \begin{nthm}[Corollary \ref{cor:epicwin}] There exists an analytic family $\tilde{X}$ with a smooth total space, with geometrically maximal degeneration and dual complex homeomorphic to the duncehat. \end{nthm}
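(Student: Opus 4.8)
The plan is to exhibit the central fibre $X$ explicitly and then let the abstract smoothing criterion do the work. First I would build a normal crossing surface $X$ whose components $\hat X_i$ are rational surfaces (concretely, suitable blow-ups of $\mathbb{P}^2$ or of Hirzebruch surfaces), glued along a configuration of rational double curves meeting at triple points in such a way that the resulting dual complex is a fixed triangulation of the duncehat; in particular the $1$-skeleton of that triangulation is the dual graph of the double curve $D=\Sing(X)$, and the $2$-cells are the triple points. By construction $X$ is then geometrically maximal. Since the dual complex is a purely combinatorial invariant, it is unchanged under locally trivial deformations, so once we produce \emph{any} smoothing with smooth total space of $X$ (or of a locally trivial deformation of $X$), its special fibre still has the duncehat as dual complex, and the resulting $\tilde X$ is the family claimed in Corollary~\ref{cor:epicwin}.

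By Corollary~\ref{cor:moduli-smooth}, to produce such a smoothing it suffices to verify the two hypotheses there for $X$ together with its locally trivial deformation space $M_X$: that locally trivial deformations of $X$ are unobstructed, and that the fibre $O^{-1}(\mathcal{O}_{\Sing(X)})\subset M_X$ is smooth at the point $[X]$. Unobstructedness I expect to follow from rationality of the components via a standard vanishing argument for the relevant (log-)tangent sheaf — this is where geometric maximality is used a second time. For the second hypothesis, recall that $T^1(X)$ is a line bundle on the nodal curve $D$, whose components are $\mathbb{P}^1$'s and whose dual graph has first Betti number $b_1$ equal to that of the $1$-skeleton of the duncehat triangulation; thus $O^{-1}(\mathcal{O}_{\Sing(X)})$ is precisely the locus of $d$-semistable locally trivial deformations, and its smoothness at $[X]$ is equivalent to surjectivity of the differential $dO\colon T_{[X]}M_X \to T_{\mathcal{O}_D}\operatorname{Pic}(D)$.

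The computation of $dO$ is the heart of the matter, and the one the introduction advertises as being done ``explicitly enough''. Two separate things must be arranged. The multidegree of $T^1(X)$ on the components of $D$ is governed by the triple point formula, i.e. by the normal bundles of each double curve inside its two adjacent components; I would therefore fix the self-intersection numbers of the double curves on each $\hat X_i$ at the outset so that this multidegree vanishes, so that the class of $T^1(X)$ actually lies in $\operatorname{Pic}^0(D)\cong(\mathbb{C}^\ast)^{b_1}$ and the differential takes values in $H^1(D,\mathcal{O}_D)\cong\mathbb{C}^{b_1}$. Then I would identify $T_{[X]}M_X$ with the appropriate hyper-$\Ext$ group (equivalently, $H^1$ of the log-tangent sheaf, via the comparison with logarithmic deformation theory set up earlier), choose an explicit family of locally trivial deformations — twisting the gluing along the double curves, and sliding the double curves inside the rational components — write down the induced change of $T^1(X)$, and check that the resulting classes span $H^1(D,\mathcal{O}_D)$.

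The main obstacle is exactly this last surjectivity: it forces the construction of $X$ to be chosen with care, since one needs enough independent locally trivial deformations to kill all $b_1$ loops of the dual graph of $D$ simultaneously, and each such deformation must be realised geometrically on the actual rational components $\hat X_i$ with their prescribed curve configurations. Once $dO$ is shown to be surjective — so that, by Corollary~\ref{cor:moduli-smooth} (or, if only reducedness / expected dimension of the fibre is obtained, by Theorem~\ref{thm:cool-smooth}), some locally trivial deformation of $X$ is smoothable — the family $\tilde X$ exists, its total space is smooth, the degeneration is geometrically maximal by construction, and its dual complex is the duncehat, as noted in the first paragraph.
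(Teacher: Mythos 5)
Your strategy coincides with the paper's: build a concrete geometrically maximal $X$ with duncehat dual complex, check Kulikov's triple point condition so that $T^1_X$ lands in $\operatorname{Pic}^0(\Sing(X))$, and then verify the hypothesis of Corollary~\ref{cor:moduli-smooth} by showing the differential of $O$ is surjective. But as written the proposal defers exactly the two steps that constitute the content of the theorem, and both are genuinely nontrivial. First, no central fibre $X$ is actually produced. The minimal duncehat has one vertex, one edge and one triangle, so $X$ must be a \emph{single} rational surface self-glued along a \emph{single} rational double curve with one node-type self-intersection and one triple point; in the paper this is the blow-up of $\mathbb{P}^2$ at eight of the nine intersection points of two nodal cubics plus one extra point of $P$ (forced by the triple point formula, which must give degree zero on the unique component of $\Sing(X)$), glued by the specific cyclic identification $\varphi(p_1)=q_2$, $\varphi(p_2)=q_3$, $\varphi(p_3)=q_1$ of the three marked points. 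One must also check that this particular gluing really yields the duncehat rather than the other combinatorial possibility (two smooth rational curves meeting in three nodes gives a different complex with $\pi_1=\mathbb{Z}/3$). Your phrasing in terms of several components $\hat X_i$ glued along ``a fixed triangulation'' suggests an SNC model; the paper deliberately avoids this because subdividing the duncehat to make it simplicial makes both the configuration and the obstruction computation much harder, and the minimal model is unavoidably non-simple normal crossing.

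Second, the surjectivity of $dO$ is asserted as ``the main obstacle'' but not proved, and it does not follow from soft arguments: $M_X$ is $9$-dimensional, $J_X=\operatorname{Pic}^0(\Sing(X))\cong(\mathbb{C}^*)^2$, and one must exhibit a $2$-dimensional family of locally trivial deformations on which the gluing data of $T^1_X$ moves with surjective differential. (Note also a slip here: the relevant $b_1$ is that of the double curve $D=\mathbb{P}^1/(0\sim 1\sim\infty)$, which is $2$, not the $b_1=1$ of the $1$-skeleton of the duncehat.) The paper does this by an explicit computation with Poincar\'e residues of the forms $1/\Omega_P$, $1/\Omega_Q$, reducing the gluing data, modulo factors depending only on the cross-ratios $n_P,n_Q$, to the pair $\left(f_Q(p_N)^{-1},\,f_P(q_N)^{-1}\right)$, and then moving the nodes $p_N$, $q_N$ independently by affine transformations fixing $n$; this is where the genericity hypothesis that $p_N$, $q_N$, $n$ are not collinear enters. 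Without carrying out this computation (or an equivalent one) the existence claim is not established, so the proposal should be regarded as a correct outline of the paper's method with its essential verifications missing.
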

\noindent
\textbf{Properties of the smoothing} - we show that the generic fibers of $\tilde{X}$ are algebraic, of general type and minimal; we calculate the fundamental group and show that it vanishes;
\newline 

\noindent \textbf{Questions} - we state and motivate few questions and conjectures based on our observations. The conjectures (one very simple and other maybe a bit too brave) are: \begin{ncnj}The generic fiber of our degeneration $X^{\varepsilon}$ is in a deformation class of Barlow's surface.\end{ncnj}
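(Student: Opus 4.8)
Since the statement is a conjecture, what follows is a proposal for how one might hope to establish it, together with an honest account of why it is out of reach with the present techniques. By the results of the section \textbf{Properties of the smoothing}, the generic fiber $X^{\varepsilon}$ is a minimal algebraic surface of general type with $p_g = q = 0$ and $\pi_1(X^{\varepsilon}) = 1$; combining $h^{1,1}(X^{\varepsilon}) = 9$ with Noether's formula gives $K_{X^{\varepsilon}}^2 = 1$, so $X^{\varepsilon}$ is a simply connected numerical Godeaux surface. The conjecture is therefore equivalent to saying that $X^{\varepsilon}$ lies in the same connected component of the Gieseker moduli space of canonical surfaces with $K^2 = 1$, $p_g = 0$ as Barlow's surface \cite{B}. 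One should keep in mind that matching $h^{p,q}$, $K^2$ and $\pi_1$ does not suffice: deformation equivalence of surfaces of general type is strictly finer than homeomorphism type, so a genuinely global argument is required.

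The first and most natural route, in the spirit of this paper, is a \emph{rigidity of the smoothing} argument. Corollary \ref{cor:epicwin} realizes $\tilde X$ via a point of the smoothing locus --- a component of $O^{-1}(\mathcal{O}_{\Sing(X)})$ --- inside the moduli space $M_X$ of locally trivial deformations of $X$. If one can prove that this locus is connected near $X$ and that, after fixing the locally trivial type, the smoothing direction is essentially unique (for instance that the relevant piece of $T^1(X)$ is one-dimensional), then \emph{every} one-parameter smoothing of $X$ has a generic fiber in one fixed deformation class. It would then be enough to exhibit Barlow's surface, or any surface deformation equivalent to it, degenerating to $X$, or to a locally trivial deformation of $X$ lying in the same connected piece of the smoothing locus; this is a concrete if delicate verification, and it would moreover make the conjecture sharp. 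The main obstacle is global: controlling the geometry of $O^{-1}(\mathcal{O}_{\Sing(X)})$ and excluding extra smoothing components goes well beyond the purely local computation of $O$ carried out here.

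A second route goes through the $\mathbb{Q}$-Gorenstein picture alluded to in the introduction. One would perform semistable reduction and contraction operations on the total space $\tilde X$ --- contracting the $(-1)$- and then the $(-2)$-configurations in the central fiber --- to present $X^{\varepsilon}$ as a $\mathbb{Q}$-Gorenstein smoothing of a projective surface with quotient singularities, ideally of the kind arising in the Lee--Park construction \cite{LYP} with $K^2 = 1$. One would then invoke, or prove, that the Lee--Park numerical Godeaux surface is deformation equivalent to Barlow's, which combined with connectedness of the ambient $\mathbb{Q}$-Gorenstein deformation space yields the conclusion. Here the hard part is that a normal crossing smoothing and a $\mathbb{Q}$-Gorenstein smoothing are a priori different degenerations of potentially different objects, so one must track the birational surgery carefully and then show that the two smoothing families actually meet in the moduli space.

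Independently of either route, a useful consistency check --- and a possible ingredient --- is to compute the canonical ring $R(X^{\varepsilon}, K_{X^{\varepsilon}})$ explicitly and match it with the known structure theory of numerical Godeaux surfaces (Reid, Catanese--Pignatelli): the torsion of $\operatorname{Pic}$, trivial here by simple connectedness, together with the generators and relations of the canonical ring locate $X^{\varepsilon}$ in one stratum of the classification. The expectation is that the trivial-torsion stratum is irreducible, which would immediately settle the conjecture; but this irreducibility is itself open, and it is precisely this gap --- rather than any feature special to our $X^{\varepsilon}$ --- that keeps the statement at the level of a conjecture.
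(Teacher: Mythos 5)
The statement you were asked about is stated in the paper as a conjecture, not a theorem: the paper offers no proof, only the one-line motivation that Barlow's surface is the only known simply connected surface of general type with these numerical invariants. Your writeup correctly recognizes this and reproduces exactly that motivation in sharper form: using minimality, $p_g=q=0$, $\pi_1=1$, $c_2=\chi_{\mathrm{top}}=11$ and Noether's formula you land on $K^2=1$, i.e.\ a simply connected numerical Godeaux surface, which is the paper's entire case for the conjecture. So on the part that can be compared, you agree with the paper. The additional material --- the rigidity-of-the-smoothing-locus route, the $\mathbb{Q}$-Gorenstein/Lee--Park route, and the canonical-ring consistency check --- goes beyond anything in the paper (which only remarks that relating the construction to \cite{LYP} would be ``extremely interesting''), and you are right to flag that none of these is carried out and that matching invariants does not imply deformation equivalence for surfaces of general type. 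In short: there is no proof here, but there is none in the paper either, and your account of why the statement remains conjectural (irreducibility of the relevant moduli stratum being open) is accurate and more informative than the paper's own remark.
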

\begin{ncnj}Every simply-connected surface with $h^{1,0}=h^{2,0}=0$ admits a geometrically maximal degeneration with non-collapsible contractible dual complex.\end{ncnj}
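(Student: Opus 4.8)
The plan is to reduce the statement to a structure theorem plus an explicit birational construction, and I expect the structure theorem and the final non-collapsibility step to be the real obstacles. For a surface $Y$ one has $h^{1,0}(Y)=h^{2,0}(Y)=0$ exactly when $p_g(Y)=q(Y)=0$. Note first that if $Y$ is rational then, by \cite{dFKX}, \emph{every} degeneration of $Y$ has a collapsible dual complex; so for the conjecture to be non-trivial one needs $\kappa(Y)\ge 1$, and I treat it as a statement about the non-rational surfaces in this class (Dolgachev surfaces and simply-connected surfaces of general type with $p_g=0$). For such $Y$ the strategy is: (i) realize $Y$ as the general fibre of a one-parameter $\mathbb{Q}$-Gorenstein smoothing $\mathcal{W}\to D$ of a \emph{rational} projective surface $W_0$ with only cyclic quotient singularities of class $T$; (ii) pass to a semistable model and check it is geometrically maximal; (iii) prove its dual complex is contractible; (iv) modify it so that the dual complex becomes non-collapsible. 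Step (i) holds for every presently known simply-connected surface of general type with $p_g=0$ --- this is the content of the constructions around \cite{LYP} and its relatives --- and, in an elementary form, for the Dolgachev surfaces; but it is \emph{not} known for an arbitrary such $Y$, and an unconditional proof would require it for all of them, which is essentially a missing structure theorem for the class. One would attack this either by running an MMP on a well-chosen birational model of some degeneration of $Y$ to force rational components, or by Hodge-theoretic input showing that, after a base change, the limit mixed Hodge structure of the shape imposed by $p_g=q=0$ is realized by a maximally unipotent (hence geometrically maximal) degeneration.

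Granting step (i), step (ii) is routine: the threefold $\mathcal{W}$ has isolated cyclic quotient singularities over $\Sing(W_0)$, so after a base change $t\mapsto t^m$ and a toric resolution one obtains $\tilde{X}\to D'$ with reduced normal crossing central fibre $X$ whose components are the proper transform of $W_0$ (rational) and toric exceptional surfaces (rational), and whose double curves --- the toric strata together with the chains of $\mathbb{P}^1$'s resolving the surface quotient singularities of $W_0$ --- are rational; hence the degeneration is geometrically maximal, and $\Delta$ is genuinely two-dimensional precisely because $W_0$ is singular. For step (iii): since $h^{1,0}(X^{\varepsilon})=h^{2,0}(X^{\varepsilon})=0$, the limit mixed Hodge structure satisfies $H^1_{\lim}=0$ and $F^2 H^2_{\lim}=0$, so the Clemens--Schmid sequence (the computation recalled in the introduction, run backwards) gives $H^1(\Delta;\mathbb{Q})=H^2(\Delta;\mathbb{Q})=0$, i.e. $\Delta$ is a $\mathbb{Q}$-acyclic $2$-complex. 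I would then extract $\pi_1(\Delta)=1$ from the explicit combinatorics of $X$ produced in step (ii) (this is consistent with $\pi_1(Y)=1$, though I do not know a formal implication in the general-type range), and conclude contractibility from Whitehead's theorem, using that a simply-connected $\mathbb{Q}$-acyclic $2$-complex has free --- hence trivial --- $H_2$.

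The main obstacle is step (iv): nothing so far prevents $\Delta$ from being collapsible (a $2$-simplex, a cone over a graph, and the like), in which case it does not answer the question of \cite{dFKX} at all. The plan is to \emph{complicate the degeneration} by a sequence of blow-ups of $\tilde{X}$ along curves in $X$ and birational modifications of the family that are isomorphisms away from $X$, chosen so that the dual complex becomes homeomorphic to a dunce hat (or to any other fixed non-collapsible contractible $2$-complex) while all components stay rational --- precisely the kind of local surgery on normal crossing surfaces carried out explicitly in the Example and Obstruction-mapping sections of this paper. The hard part is to show that such surgeries can reshape the dual complex coming from an \emph{arbitrary} $Y$ in the class all the way to a non-collapsible one without losing rationality of some component, and in particular to rule out the possibility that for some $Y$ every geometrically maximal degeneration has a collapsible dual complex --- were that to happen, the conjecture would be false as stated. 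Failing a uniform argument, the realistic fallback is a finite verification: run steps (i)--(iv) by hand over the (finitely many) known deformation classes of simply-connected surfaces of general type with $p_g=0$, plus the Dolgachev families, which would at least establish the conjecture for every surface of this kind currently in the literature. A more hands-on alternative --- first building a normal crossing surface $X$ with rational components and dunce-hat dual complex and then applying Corollary \ref{cor:moduli-smooth} to smooth it, as done here for one $Y$ --- trades this difficulty for the opposite one of controlling the diffeomorphism type of the general fibre as the construction varies over all of moduli.
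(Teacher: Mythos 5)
You were asked to prove what is, in the paper, an unproved statement: this is the author's second conjecture, offered in the Questions section as ``not supported by much but a wishful thinking,'' with the author explicitly admitting that one cannot at present even prove the existence of a single geometrically maximal degeneration of a given surface in this class. So there is no proof in the paper to compare yours against; the only question is whether your proposal closes the problem, and it does not --- as you yourself say. Your outline is essentially the program the paper itself gestures at (start from the $\mathbb{Q}$-Gorenstein smoothings of \cite{LYP}-type rational $T$-singular surfaces, resolve to a semistable model, and study the resulting dual complexes), so as a plan it is aligned with the paper's own suggestions; but two of its steps are genuinely open problems rather than technical details.

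Concretely: step (i) is a missing structure theorem --- realizing an \emph{arbitrary} simply-connected $Y$ with $p_g=q=0$ as a general fibre of a $\mathbb{Q}$-Gorenstein smoothing of a rational surface is known only for the finitely many constructed families, and neither of your suggested attacks can run as stated: there is no variation of Hodge structure to exploit (the Hodge structure of such $Y$ is already of Hodge--Tate type, which is exactly why the paper introduces ``geometrically maximal'' as an ad hoc substitute for maximal unipotency), and no MMP argument forcing rational components is known. Step (iv) has no mechanism: the surgeries you permit (blow-ups of the family along curves in $X$, modifications that are isomorphisms away from $X$) change $\Delta$ only by subdivisions and elementary expansions, which cannot destroy collapsibility, and by the invariance results of \cite{dFKX} you cannot reach a dunce hat from a collapsible semistable model this way --- one must construct an essentially different degeneration, which is precisely what the paper does by hand for one example and what is missing for general $Y$. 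Note also that as literally stated the conjecture is false for rational $Y$ (by \cite{dFKX}, Theorem 4, every such dual complex is collapsible), so your restriction to non-rational surfaces is a necessary amendment of the statement, not a harmless reduction; and your fallback of a finite verification over the known deformation classes would produce examples, not the conjecture. On the positive side, your step (iii) observation that $p_g=q=0$ forces $\mathbb{Q}$-acyclicity of $\Delta$ is correct (it is the converse direction of the paper's lemma, obtainable from the weight--monodromy structure of $H^2_{\lim}$), though $\pi_1(\Delta)=1$ still needs an argument --- plausibly a lifting argument in the spirit of Lemma \ref{lem:lift} giving surjectivity of $\pi_1(X^{\varepsilon})\to\pi_1(\Delta)$ --- and should not be asserted from ``explicit combinatorics'' you have not yet produced.
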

    
\end{subsection}

\begin{subsection}{Acknowledgement}
I thank Konstantin Loginov and Vadim Vologodsky for their crucial impact on my understanding of the subject; Denis Teryoshkin, Dmitry Sustretov, Andrei Losev and Dmitry Korb for stimulating discussions; Mikhail Kapranov for his suggestion on the connection of this work with the possible combinatorial construction of the phantom categories.
\end{subsection}

\section{Notation}

\subsection{Normal crossing.} \hfill\\

We fix the terminology. Recall that the singular point is called \textbf{normal crossing} if it is locally (in etale or analytic topology) equivalent to a union of a collection of coordinate hyperplanes in $\mathbb{C}^n$.

\begin{definition}The variety with normal crossing singularities is called \textbf{normal crossing} variety.\end{definition}

This notion admits a few different versions, which we would like to distinguish between.

\begin{definition}
Suppose $X$ is a NC-variety such that all its irreducible components are smooth. Then, it is called \textbf{simple normal crossing} (SNC).

Suppose, in addition, that the intersection of any collection of its irreducible components is connected. Then, we will call it \textbf{strictly simple normal crossing} (SSNC).
\end{definition}

\subsection{Triangulated spaces.}\hfill\\

There are also few different but similar notions of a triangulated space.

\begin{definition}
The \textbf{semi-simplicial set} $S$ is a contravariant functor $S$ from the category of non-empty totally ordered finite sets with injective order-preserving morphisms $\Delta_+$ to the category of sets.

Explicitly, it yields a set $S_0 = S(\{0\})$ of vertices, set $S_1 = S(\{01\})$ of edges, set $S_2 = S(\{012\})$ of $2$-simplices e.t.c. together with natural face maps $\partial_k: S_n \rightarrow S_{n-1}, 0 \leq k \leq n$ corresponding to deletion of $k$-th vertex.

We denote the category of semi-simplicial sets as $\ssSet$.
\end{definition}

\begin{note} $1$-dimensional semi-simplicial set is the same as ordered graph (possibly with loops and double edges).
\end{note}

This notion, however, contains additional information which we do not actually need - every simplex in the semi-simplicial set has a chosen ordering of vertices. There is the following, slightly more general, notion which forgets this data.

\begin{definition}
The \textbf{triangulated set} $T$ is a contravariant functor $T$ from the category of finite sets with injective morphisms $\Fin$ to the category of sets with the following additional property: for any $F \in \Ob(\Fin)$ action of $\Aut(F)$ on $T(F)$ is free. We denote $T(n) = T_n$, as above.

The category of triangulated sets is denoted as $\tSet$.
\end{definition}

\begin{note}
$T_n$ should be thought of as the set of $n$-dimensional simplices of $T$ with chosen ordering of vertices.
\end{note}

\begin{example}
There is a functor $p: \ssSet \rightarrow \tSet$, defined as: $p(S)(X) = S(X) \times \Or(X)$, where $\Or(X)$ denotes the set of orderings of $X$, and the face maps are natural.
\end{example}

While every $1$-dimensional triangulated set comes from the semi-simplicial set, in dimension $2$ it is already false.

\begin{note}
Every semi-simplicial set (and every triangulated set) has a geometric realization functor. We will use the word ''triangulated space'' for the pair (semi-simplicial/triangulated set, its realization) and hope it won't cause any confusion.
\end{note}

The main triangulated space we want to use throughout our work, however, does come from a semi-simplicial set:

\begin{definition}\label{def:dunce}
The \textbf{topological duncehat} is a $2$-dimensional semi-simplicial set with one vertex, one edge and one triangle and obvious structure maps.
\end{definition}

\begin{note}
We can also define functor $q: \tSet \rightarrow \ssSet$ as follows.

For a triangulated set $T$ denote the set of facets without orientation $T_n^{\red} = T_n / \Aut(\{01...n\})$.

Now, $T^{\red}_n$ and $T^{\red}_{m}, m < n$ admit a correspondence, for $\alpha \in T^{\red}_{m}, \beta \in T^{\red}_n$ we say that $\alpha < \beta$ iff they have preimages in $\tilde{\beta} \in T_n$ and $\tilde{\alpha} \in T_{m}$ such that $\tilde{\alpha}$ is a face of $\tilde{\beta}$. Now, we define:

$q(T)_n = \{\text{ordered collections of the form }\alpha_0 < \alpha_1 < ... \alpha_n\text{ with }\alpha_i \in T^{\red}_{k_i}, k_0 < ... < k_n\}$. Such collections are called ''flags'', and composition of the functors $p$ and $q$ in either order corresponds to the barycentric subdivision of the corresponding triangulated spaces.
\end{note}

We also give two possible restrictions on the definition above.

\begin{definition}
The triangulated space will be called \textbf{simple} if any simplex has no coinciding faces (of any codimension). The triangulated space will be called \textbf{strictly simple} if any two simplices either have an empty intersection or have a maximal common face. 
\end{definition}

\begin{note}
Strictly simple triangulated space with global ordering on the set of vertices is the same as simplicial complex.
\end{note}

\begin{note}
$1$-dimensional triangulated space is a graph. Being simple for it means that the corresponding graph has no loops. Strictly simple means that it also has no double edges.
\end{note}

\subsection{Dual complexes.}
\begin{definition} For every SNC-variety $X = \underset{i=1}{\overset{n}{\bigcup}} X_i$ define its dual complex $\Delta_X$ by the following semi-simplicial set: \[(\Delta_X)_k = \underset{1 \leq i_0 < ... < i_k \leq n}{\coprod} \pi_0(X_{i_0}\cap X_{i_1} \cap ... \cap X_{i_k})\] with face maps coming from natural inclusions of $X_{i_0}\cap X_{i_1} \cap ... \cap X_{i_k} \subset X_{i_0}\cap X_{i_1} \cap ... X_{i_{s-1}} \cap X_{i_{s+1}}... \cap X_{i_k}$
\end{definition}

\begin{note}
$\Delta_X$ is strictly simple iff $X$ is strictly SNC and simple iff $X$ is SNC. 
\end{note}

We can also define the dual complex for some (but not all!) normal crossing varieties, however, it won't in general will be a semi-simplicial set, but a triangulated set.

\begin{definition}
Consider a normal crossing variety $X$ together with its natural stratification $Z_n \subset Z_{n-1} \subset ... \subset Z_{1} = \Sing(X) \subset Z_0 = X$. Denote by $\mathcal{F}_k$ the set of irreducible components of $Z_k$. Consider any $F \in \mathcal{F}_k$, $\nu_F: \hat{F} \rightarrow F$ - its normalization (it is smooth). For any point of $x \in \hat{F}$, denote by $B_x(\hat{F})$ the set of irreducible components of the analytic germs of $X$, containing $(\nu_F)_{*}(T_x(\hat{F}))$ (we will call it the set of ''branches'' in $F$). It forms a local system over $\hat{F}$.

We will say that $X$ satisfies \textbf{no branch switching} assumption if this set is a trivial local system for any $F \in \mathcal{F}_k$ for any $k$. In that case we will denote this (globally defined) set $B(\hat{F})$, emphasizing its independence on the point $x$.
\end{definition}

\begin{example}
The simplest example when this assumption fails is the following: consider $X$ to be some surface which contains an embedded elliptic curve $E$, denote by $\sigma: E \rightarrow E$ any free involution of $E$. Then, $X/(s \sim \sigma (s))$ does not satisfy the no branch switching assumption.
\end{example}

\begin{note}
No branch switching assumption holds automatically if every $F$ is simply connected. We are mostly interested in the case when $X$ is a surface and all components and all intersections are rational, so for our application it is indeed the case.
\end{note}

\begin{definition}
For a normal crossing variety $X$ satisfying no branch switching assumption the \textbf{dual complex} is a triangulated set $\Delta_X$ defined as:
\[(\Delta_X)_k = \{(F, \alpha)|F\in\mathcal{F}_k, \alpha \in  \Or(B(\hat{F}))\}\]
 and the face maps are defined as follows:
 
 for any pair $F \in \mathcal{F}_k, F' \in \mathcal{F}_{k-1}$ such that $F \subset F'$ the ordering of the branches in $F$ induces the ordering of branches in $F'$ naturally (for the reason that any branch of $F$ but one can be analytically continued to the branch of $F'$). That defines the boundary operation: if $(F, \alpha)$ and $(F', \alpha')$ are such that $F \subset F'$ and the orderings are induced (and the element $k \in \alpha$ is the one which is not promoted to $\alpha'$) we say that $\partial_k (F, \alpha) = (F', \alpha')$
\end{definition}

\subsection{Maximal vs geometrically maximal degeneration}\hfill\\

Consider the family $\pi: \tilde{X} \longrightarrow D$ of complex Kahler varieties over a disk, with smooth total space and NC central fiber. In the theory of Calabi-Yau degenerations there is a maximality condition \cite{D}.

\begin{definition}[Deligne]
The degeneration of Calabi-Yau manifolds of dimension $n$ is called \textbf{maximal} if an operator of monodromy acting on the middle cohomology $H^n(X)$ has a Jordan block of length $n$.
\end{definition}

This condition is conjectured to give the dual complexes homeomorphic to a (homological) sphere. It is indeed true in the case of K3 surfaces, according to Kulikov \cite{Ku} and Freedman \cite{Fr}.

In the case of surfaces of general type with $h^{1,0}=h^{2,0}=0$ there is clearly no hope of characterizing the maximality of degeneration in terms of the limiting Hodge structure - there is no variation of Hodge structure anywhere at all, as the Hodge structure of the generic fiber is already of Hodge-Tate type. We, however, give a condition which coincides with the maximality in case of K3 surfaces. We do not know its precise relation to the notion of maximality given by Deligne in Calabi-Yau case.

\begin{definition}
The degeneration of the family of varieties is called \textbf{geometrically maximal} if the normalizations of all irreducible components of $Z_i$ are rational.
\end{definition}

\begin{note}If the dimension of the fiber is $> 2$, it is not clear whether we should maybe use ''rationally connected'' instead of ''rational'' in this definition. We are mostly concerned about the dimension $2$ case, so it doesn't really matter.\end{note}

We would also like to point out that this definition is a bit ad hoc - it is completely not obvious how to prove that some surface (even admitting a lot of deformations) has such a degeneration.

\subsection{Cohomology of the dual complex and limiting Hodge structure}
Consider, the analytic family of algebraic varieties $\pi: \tilde{X} \longrightarrow D$ over a disk, with smooth total space and normal crossing central fiber. We denote as $\pi^{-1}(\varepsilon) = X^{\varepsilon}$ the fiber over some generic nearby point $\varepsilon \neq 0$. Assume, in addition, that the degeneration is geometrically maximal. Consider $\Delta_X$ - the dual complex of the central fiber .

\begin{lemma}
$H^*(\Delta_X, \mathbb{C}) = 0$ implies $h^{*,0}(X^{\varepsilon}) = 0$
\end{lemma}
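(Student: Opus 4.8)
The plan is to use the theory of the limiting mixed Hodge structure attached to the degeneration $\pi$, which is computed by the Clemens--Schmid exact sequence and the associated spectral sequence whose $E_1$-page is built from the cohomology of the strata of the central fiber $X$. Concretely, write $X^{[k]}$ for the disjoint union of the normalizations of the $k$-fold intersections of components of $X$ (so $X^{[0]} = \coprod \hat X_i$, $X^{[1]} = \coprod$ of normalizations of double curves, etc.). The weight spectral sequence abutting to $H^m(X^\varepsilon)$ has $E_1^{p,q}$-terms assembled out of $H^*(X^{[k]})$ with the combinatorial (Čech-type) differentials dual to the face maps of $\Delta_X$. The point of the geometric maximality hypothesis is exactly that each $X^{[k]}$ is a disjoint union of rational varieties, hence $H^{j,0}(X^{[k]}) = 0$ for all $j>0$ and $H^0(X^{[k]}) = \bigoplus_{\text{components}} \mathbb{C}$.

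First I would isolate the Hodge piece $H^{m,0}$. Since $X^\varepsilon$ is (for $\varepsilon\ne 0$ generic) a smooth projective variety, $H^m(X^\varepsilon,\mathbb{C})$ carries a pure Hodge structure of weight $m$, and the nearby-cycle/limit Hodge structure has the same underlying vector space; the subspace of Hodge type $(m,0)$ survives to the limit and must appear in the associated graded of the weight filtration. In the weight spectral sequence, the only contributions that can carry Hodge type $(m,0)$ come from the $H^{j,0}$-parts of the strata cohomologies, and by geometric maximality these vanish for $j>0$; the only surviving piece in bidegree $(m,0)$ therefore comes from the $H^0$ of the strata, i.e. from the part of the complex $\bigl(H^0(X^{[0]}) \to H^0(X^{[1]}) \to H^0(X^{[2]}) \to \cdots\bigr)$, whose cohomology in degree $m$ is precisely $H^m(\Delta_X,\mathbb{C})$. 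Hence $H^{m,0}(X^\varepsilon)$ is a subquotient of $H^m(\Delta_X,\mathbb{C})$, and if the latter vanishes for all $m>0$ we conclude $h^{m,0}(X^\varepsilon)=0$. (One should also note $h^{0,0}=1$ trivially, consistent with $H^0(\Delta_X,\mathbb{C})=\mathbb{C}$ since $\Delta_X$ is connected, so the statement is understood for $*>0$.)

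I would carry this out in the following order: (1) recall the construction of the weight/limit spectral sequence for a normal-crossing degeneration, taking care that it works for NC and not merely SNC central fibers — this is where one invokes the normalization maps $\nu_F$ and the no-branch-switching setup from the Notation section so that the $E_1$-complex is genuinely the simplicial cochain complex of the triangulated set $\Delta_X$; (2) identify the lowest-weight row of $E_1$ with $C^\bullet(\Delta_X,\mathbb{C})$ and observe its cohomology is $H^\bullet(\Delta_X,\mathbb{C})$; (3) use strictness of the Hodge filtration on the limit MHS plus the rationality of all strata to show every class of type $(*,0)$ must live in that row; (4) conclude.

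The main obstacle I expect is step (1) in the NC (non-SNC) generality: the classical references (Clemens, Steenbrink, Morse) set things up for SNC central fibers, and one must check that passing to normalizations of the strata — which is exactly what the dual complex as a triangulated (not simplicial) set records — still yields the correct $E_1$-complex and that no extra differentials or extension problems interfere with pulling out the $(*,0)$-part. A clean way to finesse this is to base-change to a semistable/SNC model after a ramified cover and a sequence of blow-ups: that does not change $X^\varepsilon$, does not change $H^*(\Delta_X,\mathbb{Q})$ up to the known birational-invariance results on dual complexes, and reduces us to the textbook situation; alternatively one argues directly with the weight filtration on the log-de Rham complex, which the paper's deformation-theoretic setup makes natural. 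Either way, once the spectral sequence is in hand the Hodge-theoretic extraction of the $(*,0)$-part is formal.
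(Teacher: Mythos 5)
Your route is genuinely different from the paper's: you go through the Clemens--Schmid/Steenbrink weight spectral sequence and the limit mixed Hodge structure, which is exactly the ``standard'' argument the paper mentions and then deliberately avoids. The paper instead writes down a short \v{C}ech-type resolution of $\mathcal{O}_X$ on the central fiber by the sheaves $\nu_*\mathcal{O}$ of the normalized strata, indexed by the simplices of $\Delta_X$; acyclicity of these sheaves (rationality of the strata) gives $H^q(X,\mathcal{O}_X)\cong H^q(\Delta_X,\mathbb{C})$ directly, and the vanishing then passes to the nearby fiber (semicontinuity of $h^q(\mathcal{O})$ in the flat family, plus Hodge symmetry on $X^{\varepsilon}$). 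That argument needs no semistable reduction, no SNC reduction and no birational invariance of dual complexes, and it works verbatim in the NC setting --- precisely the points you flag as the main obstacles of your approach.

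More importantly, step (3) of your plan contains a genuine Hodge-theoretic error. It is not true that the subspace of type $(m,0)$ ``survives to the limit and must appear in the associated graded of the weight filtration'' in bidegree $(m,0)$: what is preserved is only $\dim \mathrm{Gr}_F^m H^m$, and in the limit MHS this dimension is spread over classes of types $(m,q)$ for various weights. When all strata are rational every $E_1$-term is of Tate type (the untwisted $H^{j,0}$-parts vanish and the $H^0$-terms have type $(0,0)$, possibly Tate-twisted), so the associated graded contains \emph{no} classes of type $(m,0)$ at all for $m>0$; if your locating argument were taken literally it would prove $h^{m,0}(X^{\varepsilon})=0$ for every geometrically maximal degeneration, which is false (a type III Kulikov degeneration of a K3 has all strata rational, $\Delta_X\simeq S^2$, and $h^{2,0}=1$). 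The class you are after sits in $F^m$ as the $(m,m)$-part, i.e.\ in the top-weight piece $\mathrm{Gr}^W_{2m}H^m_{\lim}\cong H^m(\Delta_X)(-m)$, not in the untwisted weight-zero row; your numerical conclusion ($h^{m,0}$ bounded by $\dim H^m(\Delta_X)$) happens to be correct only because the same combinatorial complex computes both the bottom and the top weight pieces. The argument can be repaired --- either by using Hodge--Tate-ness of the limit and identifying $\mathrm{Gr}_F^mH^m_{\lim}$ with the top-weight piece, or by using $\mathrm{Gr}_F^0H^m_{\lim}\cong H^m(X,\mathcal{O}_X)$ together with Hodge symmetry on the smooth fiber --- but as written the mechanism by which the $(m,0)$-part is traced through the spectral sequence is wrong, and this is in addition to the unresolved NC-versus-SNC reduction you already acknowledge.
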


This lemma (for the SNC case) can be deduced from the results of Schmid and Steenbrink \cite{St} on the limiting mixed Hodge structure, and improved to NC case due to the invariance of the homotopy type of the dual complex under birational changes (see \cite{dFKX}). We, however, give a very simple and straightforward argument:

\begin{proof} Denote by $X = \underset{a \in \pi_0 (\hat{X})}{\coprod} X_a$ the decomposition of the central fiber into irreducible components. Denote their normalizations as $\nu_a: \hat{X}_a \rightarrow X_a$.

For the stratum $Z_k$ denote its irreducible components $X_{\mu}$, where $\mu \in \Delta_X$, their normalizations $\nu_{\mu}: \hat{X}_{\mu} \rightarrow X_{\mu}$.

Then, we have the following resolution of the sheaf $\mathcal{O}$ on $X^0$:

\[
0 \rightarrow \mathcal{O}(X) \rightarrow \bigoplus_{\mu \in (\Delta_X)_0} \pi_* \mathcal{O}(\hat{X}_{\mu}) \otimes L_{\mu} \rightarrow \] \begin{equation} \rightarrow
 \bigoplus_{\mu \in (\Delta_X)_1} \pi_* \mathcal{O}(\hat{X}_{\mu}) \otimes L_{\mu} \rightarrow \bigoplus_{\mu \in (\Delta_X)_2} \pi_* \mathcal{O}(\hat{X}_{\mu}) \otimes L_{\mu} \rightarrow \cdots
\end{equation}

Here, $L_{\mu}$ is the $1$-dimensional orientation space of the simplex $\mu$. The maps are induced by the natural inclusions of the components $\hat{X}_{\mu} \rightarrow \hat{X}_{\mu'}$ and are taken with the sign depending on the chosen orientation of corresponding simplices.

Now, the components of this resolution are acyclic (being the direct images of $\mathcal{O}$ on some rational varieties via normalization morphism, which has no higher direct images), and, hence, it calculates the cohomology of $\mathcal{O}(X)$. Now, the complex
\[0 \rightarrow \bigoplus_{a \in (\Delta_X)_0} H^0(X, \pi_* \mathcal{O}(\hat{X}_{\mu})) \otimes(L_\mu)  \rightarrow \bigoplus_{\mu \in (\Delta_X)_1} H^0(X, \pi_* \mathcal{O}(\hat{X}_{\mu})) \otimes(L_{\mu}) \rightarrow \] \[ \bigoplus_{(\mu) \in (\Delta_X)_2} H^0(X, \pi_* \mathcal{O}(\hat{X}_{\mu})) \otimes L_{\mu} \rightarrow \cdots\]

is clearly identified with the combinatorial cochain complex of $\Delta_X$ because all $H^0(X^0, *)$ factors are isomorphic to $\mathbb{C}$ canonically, which implies the desired result.
\end{proof}

\section{Smoothing}

In this part, we present our point of view on the smoothing theory of normal crossing surfaces. It largely follows Friedman's original work on K3 surfaces \cite{Fr}.

Suppose we are given a normal crossing surface $X$. Then, there is the following question: is it possible to construct a family over a disk
$\tilde{X} \rightarrow D)$ with smooth total space, no multiple fibers and the central fiber isomorphic to $X$? There is an obstruction to it:

Consider the sheaf of first tangent cohomology $T_{X}^1$, it might be thought of as the sheaf of local deformations of the scheme. For the normal crossing surface, it forms a line bundle over the singular locus $\Sing(X)$.

The obstruction is as follows: $T_X^1$ must be isomorphic to $\mathcal{O}_{\Sing(X)}$.

$T_X^1$ of any normal crossing variety admits the very explicit description, which we reproduce, following Kawamata and Namikawa \cite{KN}.

\begin{definition}Let us denote by $R_k$ the coordinate cross in $\mathbb{C}^n$ defined by the equation $x_1 \cdots x_k = 0$. The \textbf{log chart} $U$ on $X$ is an (analytic) open subset $U$ together with the holomorphic embedding $\phi: U \longrightarrow \mathbb{C}^n$, such that $\phi(U)$ is identified with an open neighborhood of the $0$ in $R_k$. For the chart $U$ we denote $u_i = \phi^{*}x_i$. We will refer to $u_i$ for $i \in \{k+1, ..., n\}$ as ''free coordinates'' \end{definition}

\begin{definition}
The \textbf{log atlas} is the collection of log charts $U^1, ..., U^s$ together for the following structure for the pair of $U^i$ and $U^j$:

Choose the permutation $\sigma$ such that any irreducible component of $U^i \cap U^j$ such that it has index $s$ on $U^i$ has an index $\sigma(s)$ on $U^j$ - i.e., vanishing loci of $u^i_s$ and $u^j_{\sigma(s)}$ coincide.

Then, one should have the collection of functions $z^{ij}_1, ..., z^{ij}_n$ such that $u^j_{\sigma(s)} = z^{ij}_s u^i_s$. This choice is unique on $U^i \cap U^j \cap \Sing(X)$ up to the permutation of free coordinates.

This functions should satisfy the following condition: $z^{ij}_1 \cdots z^{ij}_n = 1$
\end{definition}

\begin{definition}
Suppose now $X$ was a central fiber of the family $\pi: \tilde{X} \rightarrow D$, and suppose $z$ is a standard coordinate on $D$. Then, the collection of open subsets $U_i$ of $\tilde{X}$ endowed with the maps $\phi_i: U_i \rightarrow \mathbb{C}^n$ such that $\phi_i^*x_1 \cdots \phi_i^*x_n = \pi^*z$ is called the \textbf{extended log atlas}.
\end{definition}

\begin{note}
It is easy to see that the extended log atlas on $\tilde{X}$ always exists, moreover, it is clear that the restriction of the extended log atlas on $X$ gives a log atlas. Hence, the existence of the log atlas on $X$ is the necessary condition for smoothing.
\end{note}

\begin{definition}[Kawamata-Namikawa]\label{def:log-kawa-nami}
Two log atlases are called \textbf{equivalent} if their union is a log atlas. The class of equivalences of log atlases is called the \textbf{log structure} on $X$.
\end{definition}

\begin{note}
In modern terminology this structure is called the log structure of semistable type. 
\end{note}

\begin{definition}\label{def:tx1-log}
Now, choose the collection of log charts, not necessary satisfying the log atlas condition. Then, we define the line bundle $T_X^1$ on $\Sing(X)$ by the gluing functions $\varphi^{ij} = (z^{ij}_1 \cdots z^{ij}_n)^{-1}$.
\end{definition}

The log structure is the same as the trivializing section of this bundle.

\begin{definition}
The variety with trivial $T_X^1$ is called $d$-semistable.
\end{definition}

Now, let us recall the relation of this definition of $T_X^1$ with the more standard ones. Exposition here closely follows Friedman \cite{Fr}.

Suppose $X$ is a normal crossing variety, locally (in analytic topology) embedded in a smooth variety $V$. Then, we have the following short exact conormal sequence, which is a locally free resolution of the sheaf of Kahler differentials on $X$: \begin{equation}\label{eqn:conormal-sequence} 0 \rightarrow I_X / I_X^2 \overset{e}{\rightarrow} \Omega^1_{V|X} \overset{r}{\rightarrow} \Omega^1_X \rightarrow 0\end{equation}
The dual sequence fails to be exact, and $T_X^1$ measures this failure: \begin{equation}\label{eqn:normal-sequence}0 \rightarrow T_X^0 \overset{r^*}{\rightarrow} T_{V|X} \overset{e^*}{\rightarrow} N_X \overset{\delta}{\rightarrow} \end{equation}
\[ \hspace*{-2cm} \rightarrow T_X^1 \rightarrow 0\]
The invariant version of this description is:\begin{equation}\label{eqn:tx1-invariant}T_X^1 = \lExt^1(\Omega_X, \mathcal{O}_X)\end{equation}

Now, the definition by the normal sequence from eq.\ref{eqn:normal-sequence} bears resemblance with our explicit description. Let us show it explicitly for the convenience.

\begin{lemma}\label{lem:tx1-equiv}The definition \ref{def:tx1-log} and definition by normal sequence (eq.\ref{eqn:normal-sequence}) are equivalent.\end{lemma}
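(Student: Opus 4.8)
The plan is to compare the two descriptions of $T_X^1$ locally on a log chart and check that the resulting transition functions agree, so that the line bundles on $\Sing(X)$ coincide. First I would work in a single log chart $U$ with embedding $\phi: U \hookrightarrow \mathbb{C}^n$ realizing $U$ as a neighborhood of $0$ in the coordinate cross $R_k = \{x_1\cdots x_k = 0\}$, and take $V = \mathbb{C}^n$ as the ambient smooth variety in the conormal/normal sequences \eqref{eqn:conormal-sequence}--\eqref{eqn:normal-sequence}. In this chart $I_X$ is the principal ideal generated by $f = x_1\cdots x_k$, so $I_X/I_X^2$ is free of rank one on the class of $f$, and the map $e^*: T_{V|X} \to N_X = (I_X/I_X^2)^\vee$ is contraction with $df = \sum_i (x_1\cdots \widehat{x_i}\cdots x_k)\, dx_i$. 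One computes directly that the image of $e^*$ is the ideal sheaf generated by the partial derivatives $x_1\cdots \widehat{x_i}\cdots x_k$, i.e. the ideal of the singular locus $\{x_i = x_j = 0, i\neq j\}$ inside $\mathcal{O}_X$; hence the cokernel $T_X^1$ is $N_X$ restricted (scheme-theoretically) to $\Sing(X)$, a line bundle on $\Sing(X)$ generated in this chart by the dual basis vector $(df)^\vee$, or equivalently by $f^{-1}$ thought of as a local generator of the conormal-dual twisted down to $\Sing(X)$.

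Next I would compute how this local generator transforms under a change of log charts. Given two log charts $U^i, U^j$ with the comparison data $u^j_{\sigma(s)} = z^{ij}_s u^i_s$ from the definition of the log atlas, the defining functions satisfy $f^j = u^j_1\cdots u^j_n = (z^{ij}_1\cdots z^{ij}_n)\, f^i$ (using that on the overlap the free coordinates match up to the permutation, which is irrelevant for the product, and that only the first $k$ indices actually occur in $f$ while the remaining $z^{ij}_s$ pair with free coordinates; more precisely one gets $f^j = \bigl(\prod_{s} z^{ij}_s\bigr) f^i$ on $U^i\cap U^j\cap\Sing(X)$). Therefore the generators $(f^i)^\vee$ of $T_X^1$, which scale inversely to $f$, satisfy $(f^j)^\vee = (z^{ij}_1\cdots z^{ij}_n)^{-1} (f^i)^\vee$. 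This is exactly the cocycle $\varphi^{ij} = (z^{ij}_1\cdots z^{ij}_n)^{-1}$ defining $T_X^1$ in Definition \ref{def:tx1-log}, so the two line bundles are isomorphic, and the isomorphism is canonical once the local embeddings are fixed.

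I would then check the two small points needed to make this rigorous: (a) that the transition functions $z^{ij}_s$ on the singular locus are well-defined up to the permutation of free coordinates, which does not affect the product $\prod_s z^{ij}_s$, so $\varphi^{ij}$ is genuinely well-defined; and (b) that the identification is independent of the auxiliary choice of ambient smooth $V$ — this follows from the invariant description $T_X^1 = \lExt^1(\Omega_X,\mathcal{O}_X)$ in \eqref{eqn:tx1-invariant}, which shows the normal-sequence construction is intrinsic, so it suffices to verify the cocycle in the convenient local model $V=\mathbb{C}^n$. The main obstacle I expect is purely bookkeeping: correctly tracking the free coordinates and the permutation $\sigma$ in the overlap comparison, and being careful that $N_X$ restricted to $\Sing(X)$ — rather than $N_X$ itself — is what carries the relevant transition data, i.e. that the partial-derivative ideal cuts out exactly $\Sing(X)$ with its reduced (or natural) scheme structure. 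Once that local model is set up cleanly, the cocycle identity $(f^j)^\vee = \varphi^{ij}(f^i)^\vee$ is immediate and the lemma follows.
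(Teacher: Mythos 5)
Your strategy coincides with the paper's: compute the cokernel of $e^*$ in the local model $R_k\subset\mathbb{C}^n$, trivialize it by the class dual to the defining equation, and show that this trivialization transforms by $(z^{ij}_1\cdots z^{ij}_n)^{-1}$, i.e.\ by the cocycle of Definition \ref{def:tx1-log}. The single-chart computation is fine; the gap is in the cross-chart comparison, which is exactly where the real work lies. As written, the relation $f^j=(z^{ij}_1\cdots z^{ij}_n)f^i$ is vacuous: both defining equations pull back to the zero function on $U^i\cap U^j\subset X$, so the identity can only be meant in $I/I^2$, and the two classes a priori live in conormal modules attached to two different embeddings $\phi^i,\phi^j$ into $\mathbb{C}^n$. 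The invariant description $T^1_X=\lExt^1(\Omega_X,\mathcal{O}_X)$ (eq.\ref{eqn:tx1-invariant}) guarantees the sheaf is independent of the embedding, but it does not by itself tell you what the canonical identification does to your two chosen generators $(f^i)^\vee$ and $(f^j)^\vee$; that still has to be computed.

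The paper supplies precisely this missing step: after throwing the pairwise intersections into the atlas, it reduces to one open set carrying two systems of structure functions $(u_s)$ and $(u'_s)$ with $u'_s=z_su_s$, lifts the $z_s$ to invertible ambient functions $\tilde z_s$, and changes ambient coordinates by $x'_s=\tilde z_s x_s$, so that one and the same embedding $\phi$ serves as structure map for both systems. Then the comparison $\alpha'=d(x'_1\cdots x'_n)=\tilde z_1\cdots\tilde z_n\,d(x_1\cdots x_n)+x_1\cdots x_n\,d(\tilde z_1\cdots\tilde z_n)$ takes place inside a single conormal sequence, the second term vanishes on $X$, and the cocycle $(z_1\cdots z_n)^{-1}$ for the dual generator drops out; one also notes that the choice of lift $\tilde z_s$ is irrelevant, since two lifts differ by an element of $I$ and hence change $\tilde z_s f$ only by an element of $I^2$. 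You should incorporate this reduction (or an equivalent computation of the identification), since without it ``the generators scale inversely to $f$'' has no precise meaning. Finally, the point you flag but leave unresolved --- that $f$ involves only the first $k$ coordinates while the cocycle of Definition \ref{def:tx1-log} is the product over all $n$ indices --- deserves an actual argument about what the factors $z^{ij}_s$ attached to free coordinates contribute along $\Sing(X)$, rather than the parenthetical remark that they ``pair with free coordinates.''
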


\begin{proof}
Suppose we are given, as in the def. \ref{def:tx1-log}, the collection of local log charts. Consider, at first, one of the charts, $U$, together with its structure functions $u_1, ... u_n$, the structure embedding \[\phi = (u_1, ..., u_n): U \rightarrow \mathbb{C}^n = V\] Let its image be the open neighborhood of the zero in the coordinate cross $R_k$. The explicit generator of $I_{\phi(U)}/I_{\phi(U)}^2$ is the defining equation of $X_k$, the function $x_1 x_2 \cdots x_k$. As an element of $\Omega_{V|X}$ it is represented by the differential form \[\alpha = x_2 \cdots x_k dx_1 + ... + x_1 \cdots x_{k-1} dx_k\]

Now, we need to calculate explictly the map \[e^*: T_{V|X} \longrightarrow N_X\]

We will use an element $\alpha^* \in N_X$ as the trivializing section of the normal bundle. Then, for a tangent field $v \in T_{V|X}$, the following holds: \[e^*(v) = \alpha(v) \alpha^*\]

It is, thus, not too hard to calculate the image of this map: \[\im(e^*) = \langle x_2 \cdots x_k; x_1 x_3 \cdots x_k; ... ; x_1 \cdots x_{k-1}\rangle \alpha^*\]

That means that the cokernel of this map is supported on $\Sing(R_k)$ and is a line bundle on it, generated by the image of $\alpha^*$. It now remains to calculate the gluing map on the intersection of two charts.

To simplify the calculation, let us add to our collection of log charts all the intersections between them. The gluing map, so, w.l.o.g. can be calculated for the pair of charts $U' \subset U$. Note that for the specific choice of structure functions - structure functions on $U'$ being just the restrictions of the structure functions on $U$, the gluing is trivially $1$. So we can further assume w.l.o.g. that we are actually given one local chart $U$ with two different sets of structure functions, $(u_1, ..., u_n)$, $(u_1', ..., u_n')$, and we need to calculate the resulting relation of the elements $(\alpha^*)'/(\alpha^*)|_{\Sing(U)}$. Again, we can assume (because it is true up to the permutation) that zero locus of $u_i$ and $u_i'$ coincides for all $i$. That means that $z_i = u_i' / u_i$ is a well defined function on $\overline{U \setminus \{u(p)=0\}}$. Let us promote it (non-uniquely) to the holomorphic function $z_i$ on $U$, satisfying $u_i' = z_i u_i$.

As the next step, consider the structure map $\phi = (u_1, ..., u_n): U \rightarrow V$. Let us, further, find such invertible holomorphic functions $\tilde{z}_1, ..., \tilde{z}_n$ that $\phi^* (\tilde{z}_i) = z_i$. Then, define the new coordinate chart \[x_i' = \tilde{z}_i x_i\] Note that $\phi$ in this new coordinate chart is actually the structure map for the collection $(u_1', ..., u_n')$: \[\phi^*(x_i') = u_i'\]

That allows us to compare \[\alpha = d(x_1\cdots x_n))\] and \[\alpha = d(x_1'\cdots x_n') = \tilde{z}_1 \cdots \tilde{z}_n d(x_1 \cdots x_n) + x_1 \cdots x_n d(\tilde{z}_1 \cdots \tilde{z}_n)\]

The second term vanishes on $\im(\phi)$, which shows that \[\alpha' = z_1 \cdots z_n \alpha\]
which concludes the proof.
\end{proof}

Now, let us proceed with the general theory of smoothing of normal crossing varieties. The exposition, once again, closely follows Friedman \cite{Fr}, the general results allowing this description are by Tyurina and Palamodov \cite{P}.

\begin{definition}\label{def:tx-global}For any proper algebraic variety $X$ we have the tangent complex \[T_X = \lRHom(\Omega_X, \mathcal{O}_X)\]\end{definition} its global first cohomology is the tangent space of local deformations, and the formal scheme of infinitesimal deformations of $X$ being the fiber of (non-linear formal) Kuranishi map: \begin{equation}K: H^1(T_X) \longrightarrow H^2(T_X)\end{equation}

In normal crossings case, $T_X$ has only 0-th and 1-st cohomology as the complex of sheaves. That gives the distinguished triangle \begin{equation} \label{eq:tx-dist-triangle}T_X^0 \rightarrow T_X \rightarrow T_X^1[1]\end{equation} leading to the following exact sequence:
\[0 \rightarrow H^0(T_X^0) \rightarrow H^0(T_X) \rightarrow 0 \rightarrow \]
\begin{equation}\label{eq:tx-exact-seq}\rightarrow H^1(T_X^0) \rightarrow H^1(T_X) \rightarrow H^0(T_X^1) \rightarrow\end{equation}
\[\rightarrow H^2(T_X^0) \rightarrow H^2(T_X^1) \rightarrow H^1(T_X) \rightarrow ...\]

Suppose now that the dimension of $X$ is $2$. Then, all further terms vanish automatically by dimension. We now restrict our interest to the smoothing formal deformations - i.e., such families \[\tilde{X} \rightarrow \Spec(\mathbb{C}[[t]])\] that the total space is smooth, and the central fiber is reduced. It implies the statement equivalent to Kulikov's triple point condition:
\begin{lemma}[reformulation of Kulikov's triple point condition] \label{lem:Kul-c1}
$ $ \newline $c_1(T_X^1)=0$ on every irreducible component of $\Sing(X)$.
\end{lemma}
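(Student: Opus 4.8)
The plan is to understand what constraint the existence of a smoothing family $\tilde{X}\to D$ with smooth total space and reduced central fiber imposes on $T^1_X$, using the explicit description of $T^1_X$ via log charts (Definition \ref{def:tx1-log}) together with the extended log atlas coming from the family. First I would pick a point $p$ lying on a single component $C$ of $\Sing(X)$ (away from triple points, since the claim is about each irreducible component of the singular locus, and triple points form a proper closed subset). Near $p$ the surface $X$ looks like two smooth branches meeting transversally, so in the extended log atlas we have coordinates with $\phi^*x_1\cdot\phi^*x_2 = \pi^*z$, and $T^1_X$ is glued by the functions $\varphi^{ij} = (z^{ij}_1 z^{ij}_2)^{-1}$ in the notation of Definition \ref{def:tx1-log}, restricted to $C$. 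The point is that the extended log atlas exists globally (see the Note following Definition \ref{def:tx1-log}), so these gluing functions descend to give $T^1_X|_C$ as an honest line bundle; I want to exhibit it as a tensor product of normal bundles.

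The key computation: along the smooth curve $C$, write the two branches of $X$ as smooth surfaces $\hat X_a$, $\hat X_b$ (the components through $C$, or their normalizations if $C$ lies on a single component meeting itself). Let $N_{C/\hat X_a}$ and $N_{C/\hat X_b}$ denote the normal bundles of $C$ inside each branch. From the conormal/normal sequence description (equation \eqref{eqn:normal-sequence}) combined with Lemma \ref{lem:tx1-equiv} — which identifies the log-chart definition of $T^1_X$ with the one from the normal sequence — one reads off the standard identification
\[
T^1_X|_C \;\cong\; N_{C/\hat X_a}\otimes N_{C/\hat X_b}.
\]
Indeed, in a log chart with free coordinate restricting to a coordinate on $C$, the section $\alpha^* = (x_2\cdots x_k)^{-1}$-type generator transforms, under a change of branch coordinates $u_i \mapsto z_i u_i$, exactly by $z_1 z_2$ (this is the $\alpha' = z_1\cdots z_n\,\alpha$ relation established in the proof of Lemma \ref{lem:tx1-equiv}, specialized to $k=2$), and $z_1|_C$, $z_2|_C$ are precisely the cocycles defining $N_{C/\hat X_a}$, $N_{C/\hat X_b}$.

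Now I would invoke the triple point formula for the smoothing. Since the total space $\tilde X$ is smooth and the central fiber $X = X_a + X_b$ (locally) is reduced and Cartier, we have $\mathcal{O}_{\tilde X}(X_a + X_b)|_X = \mathcal{O}_X$ because $X$ itself is the fiber $\pi^{-1}(0)$, i.e. the divisor $X$ is the pullback of a point and hence restricts trivially. Restricting to $C = X_a\cap X_b$ and using $\mathcal{O}_{\tilde X}(X_a)|_{C} = N_{C/X_b}$ and $\mathcal{O}_{\tilde X}(X_b)|_C = N_{C/X_a}$ (adjunction inside the smooth $3$-fold, since $X_b$ is smooth along $C$ and $C\subset X_b$ has normal bundle the restriction of $\mathcal{O}(X_b)$, while $C$ sits in $X_a$ with normal bundle $\mathcal{O}(X_b)|_C$... matching up the two), we get
\[
N_{C/\hat X_a}\otimes N_{C/\hat X_b} \;\cong\; \mathcal{O}_C,
\]
hence $T^1_X|_C \cong \mathcal{O}_C$, so $c_1(T^1_X)|_C = 0$. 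The main obstacle — and the step requiring care — is the bookkeeping in this last paragraph: getting the adjunction identifications and signs exactly right when $C$ may be a self-intersection curve of a single component (so that "$X_a$" and "$X_b$" are the two local branches rather than two global divisors), and checking that the no-branch-switching hypothesis lets one still speak of $N_{C/\hat X_a}\otimes N_{C/\hat X_b}$ globally along $C$. Once the local-to-global passage is justified via the extended log atlas, the degree-zero conclusion is immediate from $T^1_X|_C \cong \mathcal O_C$.
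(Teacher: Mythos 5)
There is a genuine gap in the two key identifications, and it is exactly the triple-point correction that Kulikov's formula is about. Your claimed global isomorphism $T^1_X|_C \cong N_{C/\hat X_a}\otimes N_{C/\hat X_b}$ only holds away from the triple points; the correct statement on the normalization $\nu:\hat C\to C$ is $\nu^*T^1_X \cong N_{\hat C;B_1}\otimes N_{\hat C;B_2}([t_1]+\dots+[t_k])$, with a twist by the preimages of the triple points (this is Lemma \ref{lem:tx1-explicit} in the paper). Correspondingly, your adjunction step $\mathcal{O}_{\tilde X}(X_a+X_b)|_X\cong\mathcal{O}_X$ is not what the fiber condition gives: the trivial bundle is $\mathcal{O}_{\tilde X}(X)|_X$ where $X$ is the \emph{whole} central fiber, and when you restrict $\mathcal{O}_{\tilde X}(X)$ to $C$ the components (or branches) other than the two through $C$ contribute precisely the divisor of triple points on $C$. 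As written, each of your two intermediate claims is false whenever $C$ carries triple points --- which is the case of interest here: in the paper's example the two branch normal bundles have degrees $-2$ and $-1$, and only the three triple points restore degree $0$, so your argument as stated would ``prove'' $-3=0$. The two omissions are of the same size and cancel, so the repaired argument (restrict $\mathcal{O}_{\tilde X}(X)\cong\mathcal{O}_X$ to $\hat C$, match it with $\nu^*T^1_X$ including the $([t_1]+\dots+[t_k])$ term, and do this on the normalization so the self-intersecting case makes sense) is valid; but that bookkeeping is exactly the step you deferred, and it is the content of the lemma.

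For comparison, the paper's proof is shorter and proves more: a smoothing with smooth total space and reduced fiber yields an extended log atlas, whose restriction to $X$ is a log atlas, i.e.\ a log structure in the sense of Definition \ref{def:log-kawa-nami}, i.e.\ a nowhere-vanishing section of $T^1_X$ (Definition \ref{def:tx1-log}); hence $T^1_X$ is \emph{trivial} as a line bundle on $\Sing(X)$, not merely of degree zero on each component. Your (corrected) adjunction route is a legitimate alternative and makes the numerical triple-point formula transparent, but it only gives the $c_1$ statement directly.
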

\begin{proof}Actually, $T_X^1$ should be a trivial bundle since such deformation requires the existence of log structure (see def.\ref{def:log-kawa-nami})\end{proof}

Let us now describe the geometric significance of the sequence from eq.\ref{eq:tx-exact-seq}.

\begin{note}\label{note:geom-sense}
The $T_X^0$ is just the sheaf of derivations of $X$. So, the fact that $H^0(T_X^0)$ and $H(T_X)$ are identified is clear from the standpoint of the deformation theory - any infinitesimal automorphism of $X$ comes from the vector field on $X$, just as in the smooth case.

Now, the cohomologies of $T_X^0$ actually control \textbf{locally trivial} deformations - that is, deformations which preserve the structure of singularities. We will see it more explicitly later, or one could refer to \cite{Fr}. The locally trivial deformations embed into the tangent space of all deformations, $H^1(T_X)$. Any nontrivial deformation gives the section of $H^0(T_X^1)$. Now, assuming Kulikov's topological condition (see lemma \ref{lem:Kul-c1}) we get that either $h^0(T_X^1) = 1$ if $T_X^1$ is trivial, or $h^0(T_X^1) = 0$ if it is nontrivial, yet trivial topologically. In the latter case, any deformation is locally trivial (which improves already known fact that there are no smoothings in this case). In the former case, the mapping from $H^1(T_X)$ to $H^0(T_X^1)$ could still be zero. This, however, can be avoided in case $H^2(T_X^0) = 0$ (a natural assumption corresponding to the idea that $X$, as a gluing of components, is not ''overdetermined'', so it has non-obstructed locally trivial deformations). In that case, we will have $h^1(T_X) = h^1(T_X^0) + 1$, and the Kuranishi map $K$ is a formal map of the form \[K:H^1(T_X) \rightarrow H^1(T_X^1)\] Later, the very explicit description of this map will be given. Motivated by this, we from now on assume the following:

\end{note}

\begin{assumption}\label{as:local-unobstructed}$H^2(T_X^0) = 0$\end{assumption}

In particular, this assumption implies that the locally trivial deformations of $X$ are unobstructed.

Now, we would like to describe the Kuranishi mapping. Our main source of knowledge here is the discussion in the Chapter 4 in \cite{Fr}.

According to \cite{P} $T_X$ forms a sheaf of dg-algebras. The $[\cdot,\cdot]$ descends on cohomology, which gives the quadratic mapping
\[K_2: H^1(T_X) \rightarrow H^2(T_X)\]
\[K_2(\alpha) = [\alpha, \alpha]\]

By the general deformation theory it is the second term of the Kuranishi mapping $K = \frac{1}{2}K_2 + \frac{1}{6}K_3 + \frac{1}{24}K_4 + ...$

Now, in our case the cohomologies of $T_X$ have a filtration: \[0 \rightarrow H^*(T_X^0) \rightarrow H^*(T_X) \rightarrow H^*(T_X^1) \rightarrow 0 \]

This filtration is in agreement with the commutator - that is, $H^*(T_X^0 \oplus T_X^1[1])$ is an associated graded algebra of $H^*(T_X)$.

Moreover, the commutator in this subalgebra has the following explicit description:

for $a, b \in H^*(T_X^0):$ \[[a,b] \text{ comes from the commutator of vector fields}\]

for $a \in H^*(T_X^0), b \in H^*(T_X^1[1]):$ \[ [a,b] = L_a(b)\] comes from the action of vector fields on all natural objects.

for $a, b \in H^*(T_X^1[1])$: \[[a,b] = 0\]  for obvious grading reasons.

Now, suppose $X$ had a log structure. Then, denote by $\xi \in H^0(T_X) = H^1(T_X^1[1])$ the corresponding trivializing section of the line bundle $T_X^1$ (this is the same as $\alpha^*$, but we use it in the cohomological calculations, and $\alpha^*$ in the calculations with local coordinates). 

\begin{definition}\label{def:tx-log}
The sheaf $T_X(\log) \subset T_X$ is a subsheaf of all vector fields $v$ which preserve log structure: i.e. \[[v, \xi] = 0\]

It is seen by direct local inspection that this sheaf is locally free (and $T_X$ is not).
\end{definition}

It is readily seen (again, verified locally) that $T_X(\log)$ admits the following resolution:

\begin{equation}\label{eq:tx-log-resolved}0 \rightarrow T_X(\log) \overset{i}{\rightarrow} T_X^0 \overset{\ad_{\xi}}{\rightarrow} T_X^1 \rightarrow 0\end{equation}

\begin{lemma} $\ad_\xi$ is actually a morphism of \textbf{coherent} sheaves i.e. it is linear w.r.t. multiplication by functions from the ground ring. \end{lemma}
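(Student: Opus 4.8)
The statement is local on $X$ (being $\mathcal{O}_X$-linear is a local property), so it suffices to check it in a single log chart. The plan is to work exactly as in the proof of Lemma \ref{lem:tx1-equiv}: present $X$ locally as the hypersurface $\{f=0\}\subset V=\mathbb{C}^n$ with $f=x_1\cdots x_k$ (every normal crossing variety is, locally, the hypersurface cut out by a product of coordinates), so that $I_X/I_X^2$ is free of rank one on $\bar f$, the conormal sequence \ref{eqn:conormal-sequence} is a locally free resolution of $\Omega^1_X$, and, as computed there, $T_X^1=\mathcal{O}_X\,\alpha^*\big/\langle\partial_1 f,\dots,\partial_n f\rangle\,\alpha^*$, where $\alpha^*\in N_X=\mathcal{H}om(I_X/I_X^2,\mathcal{O}_X)$ is the dual basis vector, $\alpha^*(\bar f)=1$, and $\xi=\delta(\alpha^*)$ is the trivializing section entering Definition \ref{def:tx-log}.

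Next I would make $\ad_\xi$ explicit on this resolution. A local section $v$ of $T_X^0$ is a derivation of $\mathcal{O}_X$; lift it to a derivation $\tilde v$ of $\mathcal{O}_V$ preserving $I_X=(f)$, so $\tilde v(f)=h f$ for some $h\in\mathcal{O}_V$ (such a lift exists, as in the setup of Lemma \ref{lem:tx1-equiv}). The bracket $[v,-]$ on $H^0\otimes H^1$ of the tangent complex is the natural action of the derivation $v$ on $T_X^1=\lExt^1(\Omega_X,\mathcal{O}_X)$, which on the resolution is induced (up to sign) by $\psi\mapsto \tilde v\circ\psi-\psi\circ L_{\tilde v}$, with $L_{\tilde v}$ the Lie-derivative action on the conormal bundle, $L_{\tilde v}\bar f=\overline{\tilde v(f)}=\bar h\,\bar f$. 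Evaluating on $\psi=\alpha^*$ and using $\alpha^*(\bar f)=1$ gives the key formula
\[
\ad_\xi(v)\;=\;\tilde v\bigl(\alpha^*(\bar f)\bigr)-\alpha^*\bigl(\bar h\,\bar f\bigr)\;=\;\tilde v(1)-\bar h\;=\;-\,\overline{\tilde v(f)/f}\ \cdot\ \alpha^*\ \in\ T_X^1 .
\]
The content of the lemma is precisely that the ``derivative term'' $v\mapsto \tilde v\bigl(\alpha^*(\bar f)\bigr)=\tilde v(1)$ — the part responsible for the non-$\mathcal{O}$-linearity of Lie derivatives in the vector field — vanishes here, because the trivializing section corresponds to the \emph{constant} $1$; equivalently, the principal symbol of the a priori first-order operator $\ad_\xi$ is zero.

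From the formula, $\mathcal{O}_X$-linearity is immediate: for $\phi\in\mathcal{O}_X$ pick a lift $\tilde\phi\in\mathcal{O}_V$; then $\tilde\phi\,\tilde v$ lifts $\phi v$ and preserves $(f)$, with $(\tilde\phi\tilde v)(f)=\tilde\phi\,h\,f$, hence
\[
\ad_\xi(\phi v)\;=\;-\,\overline{\tilde\phi\,h}\ \cdot\ \alpha^*\;=\;\phi\cdot\bigl(-\,\bar h\,\alpha^*\bigr)\;=\;\phi\cdot\ad_\xi(v),
\]
and additivity is clear. One also checks this is well defined: changing $\tilde v$ by $f\,w$ for $w\in T_V$ changes $h$ by $w(f)=\sum_i w_i\,\partial_i f$, which lies in the Jacobian ideal and is killed in $T_X^1$; changing $\tilde\phi$ by an element of $I_X$ has no effect. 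Thus $\ad_\xi\colon T_X^0\to T_X^1$ is $\mathcal{O}_X$-linear, i.e. a morphism of coherent sheaves — which in particular is what makes \ref{eq:tx-log-resolved} an exact sequence of coherent sheaves and $T_X(\log)$ of Definition \ref{def:tx-log} coherent (indeed locally free).

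The one genuine subtlety — the step I expect to take the most care — is the second one: pinning down that the DGLA bracket $[v,-]\colon T_X^0\otimes T_X^1\to T_X^1$ really is computed, at the level of the explicit local resolution, by $\psi\mapsto \tilde v\circ\psi-\psi\circ L_{\tilde v}$, signs included, and that this descends to $T_X^1$ independently of the lift $\tilde v$. This is exactly the ``action of vector fields on all natural objects'' description of the bracket recorded just before Definition \ref{def:tx-log}, combined with the invariant identification $T_X^1=\lExt^1(\Omega_X,\mathcal{O}_X)$ of \ref{eqn:tx1-invariant} and its presentation via \ref{eqn:normal-sequence}; once it is granted, the remainder is the short computation above.
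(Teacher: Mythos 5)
Your argument is correct and is essentially the paper's own proof: both reduce to the local computation that the action of a vector field $v$ on the trivializing section $\alpha^*$ is multiplication by $-\overline{\partial_{\tilde v}(f)/f}$ (the paper writes this as $-\sum_i \partial_v(u_i)/u_i$ in a log chart), which is visibly $\mathcal{O}_X$-linear in $v$. The only cosmetic difference is that you run the computation through the normal-sequence presentation of $T_X^1$ (Lemma \ref{lem:tx1-equiv}) with an ambient lift $\tilde v$, adding the (correct) check that the class of $\tilde v(f)/f$ is independent of the lift modulo the Jacobian ideal, where the paper instead works with the log-chart transition functions and notes that $\partial_v(u_i)$ vanishes where $u_i$ does.
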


Statement of this lemma follows from proposition 4.3 of \cite{Fr}.

\begin{proof}
In the local chart $U$ with structure functions $u_1, ..., u_n$ we act by a vector field $v$. Its infinitesimal action is \[u_i \mapsto u_i + \epsilon \partial_v(u_i)\]

The change functions to this new set of structure functions, so, are \[z_i = 1 + \epsilon \frac{\partial_v(u_i)}{u_i}\] (here, note that $\partial_v(u_i)$ is ought to vanish on the same component $u_i$ does due to $v$ being a derivation, so $z_i$ are well defined on $\Sing(U)$). So, \[L_{v}(\alpha^*) = -(\underset{i=1}{\overset{n}{\sum}} \frac{\partial_v u_i}{u_i})\alpha^*\]

This map is clearly linear w.r.t. the multiplication of $v$ by functions, which means that $\ad_{\xi}$ is linear.
\end{proof}

Provided $X$ has a dimension $2$ and assumption \ref{as:local-unobstructed} holds, we have \newline $H^{*}(T_X(\log))$ (non-canonically) equal to the cohomology of $\ad_{\xi}$ acting on $H^*(T_X^0 \oplus T_X^1[1])$ (higher terms of spectral sequence vanish for dimensional reasons).

\begin{note} There are actually two approaches to the smoothing. Initial approach, due to Friedman, was only about dimension two, and assumption \ref{as:local-unobstructed} held trivially, so it was mainly concerned about the action of $\ad_{\xi}$ on $H^*(T_X^0 \oplus T_X^1[1])$. Latter approach, due to Kawamata and Namikawa worked in arbitrary dimension and was mostly concerned about logarithmic cohomology, but used Calabi-Yau condition, which allowed them to smooth out varieties of higher dimension. This approach was later generalized to (generalized) Fano varieties by Tziolas \cite{Tz}. We are neither in Fano, nor Calabi-Yau case - so we choose the former, earlier approach to analyze our problem.
\end{note}

From now on, we assume the dimension $2$ and assumption \ref{as:local-unobstructed}.

\begin{theorem} \label{thm:log-def} Suppose \[\ad_{\xi}: H^1(T_X^0) \rightarrow H^1(T_X^1) = H^2(T_X^1[1])\] is surjective (or, equivalently, $H^2(T_X(\log)) = 0$) Then, there are smoothing deformations of $X$. \end{theorem}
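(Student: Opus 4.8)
The plan is to build the smoothing as a logarithmic-smooth deformation of $X$ over the standard log disk, constructed formally order by order, and then to algebraize. Recall that a smoothing of $X$ carries an extended log atlas whose restriction to $X$ represents the given log structure (the Note after the definition of the extended log atlas, together with Lemma~\ref{lem:tx1-equiv}); conversely a flat proper family over $D$ with central fibre $X$ that carries an extended log atlas has smooth total space and reduced special fibre, since locally it has the shape $x_1\cdots x_k=t$ (times free coordinates). Hence it suffices to produce, for every $m$, a flat family $X_m\to\Spec\mathbb{C}[t]/t^{m+1}$ equipped with such an extended log atlas, compatibly in $m$, and then to algebraize the resulting formal family to an analytic one over a small disk.

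First I would set up the obstruction theory for this order-by-order construction. Fix $X_m$ over $\mathbb{C}[t]/t^{m+1}$ with its extended log atlas. Locally the extension to order $m+1$ always exists, because the local model $\{x_1\cdots x_k=t\}$ literally extends to all orders; the obstruction to gluing the local extensions into a global $X_{m+1}$ is a Čech $2$-cocycle with values in the sheaf of infinitesimal automorphisms of the local log model, which is precisely $T_X(\log)$. Here it is essential, as in Definition~\ref{def:tx-log} and the following lemma, that $\ad_{\xi}$ is $\mathcal{O}_X$-linear, so that $T_X(\log)$ is an honest coherent (indeed locally free) sheaf whose $H^2$ governs the gluing. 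Thus the obstruction to passing from order $m$ to order $m+1$ lies in $H^2(X,T_X(\log))$ for every $m\ge 1$; the first-order deformation itself exists because $X$ carries a log structure (Definition~\ref{def:log-kawa-nami}). Equivalently, this whole step is the deformation theory of the log smooth morphism $X^{\dagger}\to\mathbb{C}^{\dagger}$ over thickenings of the log point, whose tangent and obstruction spaces are $H^1(T_X(\log))$ and $H^2(T_X(\log))$.

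Now the hypothesis enters. From the short exact sequence~\eqref{eq:tx-log-resolved}, $0\to T_X(\log)\to T_X^0\xrightarrow{\ad_{\xi}}T_X^1\to 0$, the long exact cohomology sequence contains
\[
H^1(T_X^0)\xrightarrow{\ \ad_{\xi}\ }H^1(T_X^1)\longrightarrow H^2(T_X(\log))\longrightarrow H^2(T_X^0).
\]
By Assumption~\ref{as:local-unobstructed} the last group vanishes, so $H^2(T_X(\log))$ is the cokernel of $\ad_{\xi}$ on $H^1$; this simultaneously proves the equivalence asserted in the statement and shows that $H^2(T_X(\log))=0$ under the hypothesis. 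Consequently all the obstructions of the previous paragraph vanish, and one obtains by induction a compatible system $\{X_m\}$, i.e. a formal family over $\Spec\mathbb{C}[[t]]$ with smooth total space and special fibre $X$, carrying an extended log atlas.

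Finally I would algebraize and conclude. Since $X$ is a projective surface, a suitable relatively ample class can be carried along the formal deformation, so by Grothendieck's existence theorem the formal family is effective, hence algebraizable by Artin; analytifying and restricting to a small disk $D$ yields an analytic family $\tilde X\to D$ with smooth total space, no multiple fibres, and central fibre $X$ — a smoothing in the required sense. The step I expect to be the main obstacle is not the cohomological vanishing, which is essentially formal once~\eqref{eq:tx-log-resolved} is available, but the careful bookkeeping that the order-by-order obstruction genuinely lands in $H^2(T_X(\log))$ rather than in a larger group — this rests on the local rigidity of the model $x_1\cdots x_k=t$ and on the $\mathcal{O}_X$-linearity of $\ad_{\xi}$ — together with the algebraization step, which requires that projectivity (or at least an ample class) survive the deformation.
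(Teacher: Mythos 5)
Your argument is correct, but it is not the paper's argument: you have essentially reconstructed the proof of Corollary 2.4 of \cite{KN}, which is exactly the reference the paper cites for Theorem \ref{thm:log-def} before deliberately giving a different, self-contained proof. You work order by order with the log smooth deformation theory of $X^{\dagger}$ over the log point, locate the obstruction to each extension in $H^2(T_X(\log))$, and kill it via the long exact sequence of eq.~\ref{eq:tx-log-resolved} together with Assumption \ref{as:local-unobstructed}; the cohomological identification $H^2(T_X(\log))=\mathrm{coker}\bigl(\ad_{\xi}\colon H^1(T_X^0)\to H^1(T_X^1)\bigr)$ is the same in both treatments. The paper instead stays entirely inside the ordinary (non-logarithmic) Kuranishi theory of $X$: it splits $H^1(T_X)=H^1(T_X^0)\oplus\mathbb{C}\cdot t$, observes that the quadratic part of the Kuranishi map is $(\sum_i\ell_i x_i)t+rt^2$ with $\sum_i\ell_i x_i=\ad_{\xi}$ and that the full series $K$ is divisible by $t$ because the locally trivial locus $\{t=0\}$ is unobstructed, and then applies the implicit function theorem to $K/t$, whose linear term is the surjection $\ad_{\xi}$, to produce a smooth formal subvariety transverse to $\{t=0\}$. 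What your route buys is generality and standardness — it is the argument that works in all dimensions (as in \cite{KN}) and it makes the role of $H^2(T_X(\log))$ as an obstruction space explicit — at the price of the bookkeeping you yourself flag, namely verifying that the Čech obstruction really lands in $H^2(T_X(\log))$, plus an algebraization step via Grothendieck existence that presumes projectivity. What the paper's route buys is a concrete geometric picture of the Kuranishi space (the divisor $Z'=\{K/t=0\}$ and its intersection with the locally trivial locus) that is reused verbatim in the refinement Theorem \ref{thm:cool-smooth}, where the fiber of the obstruction map is allowed to be non-reduced and no order-by-order vanishing argument would apply; it also ends with Artin approximation on an analytic germ rather than formal GAGA, so no ample class needs to be carried along.
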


This is a Corollary 2.4. from \cite{KN}. We now give the geometric self-contained proof for the reader's convenience.

\begin{proof}Consider the quadratic map \[K_2: H^1(T_X) \rightarrow H^2(T_X)\] Choose the non-canonical splitting $H^1(T_X) = H^1(T_X^0) \oplus (\mathbb{C} = H^0(T_X^1))$. Denote the coordinates on $H^1(T_X^0)$ as $x_1, ..., x_d$ and the coordinate along $H^0(T_X^1)$ as $t$. Then, \[K_2(x_1, ..., x_n, t) = (\sum_i \ell_i x_i)t + r t^2, \ell_i, r \in H^1(T_X^1)\] - there are no quadratic terms in $x$ due to $[\cdot,\cdot]$ being in agreement with filtration. Moreover, the linear map $\sum_i \ell_i x_i: H^1(T_X^0) \rightarrow H^1(T_X^1)$ is precisely $\ad_{\xi}$. Now, note that one can force $r=0$ by the linear change of coordinates, due to our assumption that $\ad_{\xi}$ is surjective. Assume now w.l.o.g that it is the case.

Now, note that the whole series $K$ is actually divisble by $t$, either by noting that Massey products are also in agreement with the filtration or noting that the locus $t=0$ is a locus of locally-trivial deformations, which are unobstructed by assumption \ref{as:local-unobstructed} and so it lies in the formal scheme $K=0$.

Then, the series $K/t = \ad_{\xi} + ...$ start with the linear term which is a surjective linear mapping, so define a smooth formal subvariety by the formal implicit function theorem. This subvariety is also clearly transverse to the locus of locally trivial deformations $t=0$. This implies existence of the formal smoothing deformation, and to make it non-formal apply Artin's approximation theorem (see [KN] for details).
\end{proof}

Our main concern for the rest of this section is developing methods to calculate $H^*(T_X(\log))$. Our approach is geometric in nature - we interpret $T_X(\log)$ as a sheaf controlling deformation theory of a certain object, and then deem to consider the corresponding moduli space by other, geometric, means.

\begin{theorem}[Friedman, {\cite{Fr}}, Thm. 4.5] \label{thm:txlog}
$H^1(T_X(\log))$ is a tangent space to the locally trivial deformations which preserve $d$-semistability.
\end{theorem}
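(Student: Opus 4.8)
The plan is to identify $H^1(T_X(\log))$ with the space of first-order deformations of $X$ \emph{together with} its semistable log structure, by an explicit \v{C}ech description, and then to read off from the resolution \eqref{eq:tx-log-resolved} that forgetting the log structure exhibits the locally trivial, $d$-semistable deformations as a quotient of $H^1(T_X(\log))$. The geometric content underlying everything is that $\ad_\xi$ on $H^1$ is the derivative of the map sending a locally trivial deformation of $X$ to the isomorphism class of its first tangent sheaf $T^1$ in $\mathrm{Pic}(\Sing(X))$.

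First I would fix a Stein cover $\{U_i\}$ of $X$ on which log charts exist and on which $T_X^0$, $T_X^1$ and $T_X(\log)$ are acyclic, together with their intersections; on each $U_i$ the log structure is recorded by a trivializing section $\alpha^*_i$ of $T_X^1|_{U_i}$, and via $\xi$ we identify $T_X^1 \cong \mathcal{O}_{\Sing(X)}$. As recalled in Note~\ref{note:geom-sense}, a locally trivial first-order deformation is obtained by gluing the trivial deformations $U_i \times \Spec\mathbb{C}[\epsilon]$ by automorphisms $\mathrm{id}+\epsilon\,\theta_{ij}$ with $\theta_{ij}\in T_X^0(U_{ij})$, the cocycle condition making $\{\theta_{ij}\}$ a \v{C}ech $1$-cocycle and local retrivializations accounting for coboundaries, so such deformations are classified by $H^1(T_X^0)$. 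Next I would compute how $T^1$ transports: $\mathrm{id}+\epsilon\theta_{ij}$ carries $\alpha^*_i$ to $\alpha^*_i + \epsilon L_{\theta_{ij}}(\alpha^*_i)$, and the local computation behind the $\mathcal{O}$-linearity of $\ad_\xi$ gives $L_{\theta_{ij}}(\alpha^*_i) = -\bigl(\sum_s \partial_{\theta_{ij}}u_s/u_s\bigr)\alpha^*_i = -\ad_\xi(\theta_{ij})\,\alpha^*_i$. Hence, relative to $\{\alpha^*_i\}$, the deformed $T^1$ has transition cocycle $\{1-\epsilon\,\ad_\xi(\theta_{ij})\}$, whose class in $H^1(\mathcal{O}_{\Sing(X)})=H^1(T_X^1)$ is $-[\ad_\xi(\theta)]$; this is the asserted identification of $\ad_\xi\colon H^1(T_X^0)\to H^1(T_X^1)$ with that derivative.

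From here the theorem follows by chasing the long exact sequence of \eqref{eq:tx-log-resolved}. A locally trivial first-order deformation preserves $d$-semistability exactly when the deformed $T^1$ stays trivial, i.e.\ when its tangent vector lies in $\ker\bigl(\ad_\xi\colon H^1(T_X^0)\to H^1(T_X^1)\bigr)$, and the long exact sequence gives
\[0 \to \operatorname{coker}\!\bigl(H^0(T_X^0)\to H^0(T_X^1)\bigr) \to H^1(T_X(\log)) \to \ker\!\bigl(H^1(T_X^0)\to H^1(T_X^1)\bigr)\to 0,\]
with both unlabelled arrows equal to $\ad_\xi$. Thus $H^1(T_X(\log))$ is an extension of the tangent space to locally trivial $d$-semistable deformations by the space of infinitesimal changes of the trivializing section $\xi$ modulo those induced by global vector fields, i.e.\ it is the tangent space to the deformation problem of the pair $(X,\text{log structure})$. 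Concretely, a class in $H^1(T_X(\log))$ is a cocycle $\{\theta_{ij}\}$ with $\ad_\xi(\theta_{ij})=0$; gluing by $\mathrm{id}+\epsilon\theta_{ij}$ produces a locally trivial deformation on which the $\alpha^*_i$ agree to first order, hence patch to a global trivialization of $T^1$, that is, to a deformed semistable log structure, and conversely.

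The step I expect to be the main obstacle is the surjectivity in this last correspondence: given a locally trivial, $d$-semistable first-order deformation carrying a chosen extension of the log structure, one must show the patching data can be chosen simultaneously in $T_X(\log)$. This amounts to exactness of \eqref{eq:tx-log-resolved} on the chosen cover — correcting the $\theta_{ij}\in T_X^0(U_{ij})$ by local sections killing $\ad_\xi$ while preserving the cocycle condition — and it uses both the local freeness of $T_X(\log)$ from Definition~\ref{def:tx-log} and the triviality of the deformed $T^1$; this is where the local structure of normal crossings is genuinely needed. It is also worth making explicit that "the" tangent space in the statement is cleanest to read as that of the problem of deforming $X$ with its log structure, for which the identification with $H^1(T_X(\log))$ is exact; forgetting the log datum then gives the surjection above onto the plain locally trivial $d$-semistable deformations.
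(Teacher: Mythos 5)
Your proposal is correct and follows essentially the same route as the paper: both identify a locally trivial first-order deformation with a \v{C}ech cocycle $\{\theta_{ij}\}$ in $T_X^0$, compute that the deformed $T^1$ has transition cocycle $L_{\theta_{ij}}(\alpha^*)/\alpha^* = -\ad_\xi(\theta_{ij})$, and conclude that $d$-semistability is preserved exactly when the cocycle can be taken in $T_X(\log)$. Your additional chase of the long exact sequence of \eqref{eq:tx-log-resolved}, which exhibits $H^1(T_X(\log))$ as an extension of $\ker(\ad_\xi)$ by $\operatorname{coker}\bigl(H^0(T_X^0)\to H^0(T_X^1)\bigr)$ and reads it as the tangent space to deformations of the pair $(X,\text{log structure})$, is a worthwhile precision that the paper's one-line proof leaves implicit.
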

\begin{proof}
Consider the covering by log atlas $(U^1, ..., U^s)$, with corresponding structure functions, $\alpha$ - corresponding nonvanishing section of $(T_X^1)^*$, $v^{ij} \in T_X (U^i \cap U^j)$ - Cech cocycle. The Cech cocycle deforming $T_X^1$ is then $\varphi^{ij} = L_{v^{ij}} (\alpha) / \alpha$, which is trivial iff $[v, \xi] = 0$
\end{proof}

\begin{assumption}
From now on, $\Sing(X)$ only has rational irreducible components.
\end{assumption}

\begin{definition}
Denote as $M_X$ the space of locally trivial deformations of $X$. Denote as $J_X$ the space of line bundles on $\Sing(X)$ with $c_1 = 0$. Denote by $O: M_X \rightarrow J_X$ the map which sends $X$ to $T_X^1$.
\end{definition}

\begin{note}
While $\Sing(X)$ could indeed deform, $J_X$ is always $(\mathbb{C}^*)^{h^1(\Sing(X))}$, identified canonically. See Lemma \ref{lem:glue} for details.
\end{note}

\begin{corollary} \label{cor:moduli-smooth}
If the fiber $O^{-1}(\mathcal{O}_{\Sing(X)})$ is smooth in $X$, there exists a smoothing of $X$.
\end{corollary}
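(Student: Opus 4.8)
The plan is to deduce Corollary \ref{cor:moduli-smooth} from Theorem \ref{thm:log-def} by reinterpreting the surjectivity of $\ad_\xi$ as a smoothness/transversality statement about the fiber $O^{-1}(\mathcal{O}_{\Sing(X)})$ inside $M_X$. The key dictionary is: by Theorem \ref{thm:txlog}, $H^1(T_X(\log))$ is the tangent space at $X$ to the space of locally trivial deformations preserving $d$-semistability, i.e. exactly $O^{-1}(\mathcal{O}_{\Sing(X)})$; while $H^1(T_X^0)$ is the tangent space to $M_X$ itself. So the resolution (\ref{eq:tx-log-resolved}), $0 \to T_X(\log) \to T_X^0 \xrightarrow{\ad_\xi} T_X^1 \to 0$, together with Assumption \ref{as:local-unobstructed} ($H^2(T_X^0)=0$), gives the long exact sequence
\[
0 \to H^1(T_X(\log)) \to H^1(T_X^0) \xrightarrow{\ad_\xi} H^1(T_X^1) \to H^2(T_X(\log)) \to 0,
\]
and I read off that $\ad_\xi$ on $H^1$ is surjective if and only if $H^2(T_X(\log))=0$.

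First I would make precise the claim in the Note after the definition of $M_X$ and $J_X$: that $J_X$ is canonically $(\mathbb{C}^*)^{h^1(\Sing(X))}$ across all locally trivial deformations (this is what lets $O^{-1}(\mathcal{O}_{\Sing(X)})$ make sense as a subscheme of $M_X$), citing Lemma \ref{lem:glue}. With that in hand, the differential of $O: M_X \to J_X$ at $X$ is identified with the map on first cohomology $H^1(T_X^0) \to H^1(T_X^1)$ induced by $\ad_\xi$: indeed $\ad_\xi$ on sheaves sends an infinitesimal locally trivial deformation (a class in $H^1(T_X^0)$) to the induced infinitesimal change of the line bundle $T_X^1$ (a class in $H^1(T_X^1)$, the tangent space to $J_X = \mathrm{Pic}^0(\Sing X)$), which is exactly $d_X O$. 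The fiber $O^{-1}(\mathcal{O}_{\Sing(X)})$ contains $X$ since $X$ is assumed $d$-semistable (it has a log structure), and its Zariski tangent space at $X$ is $\ker(d_X O) = \ker(\ad_\xi\colon H^1(T_X^0)\to H^1(T_X^1)) = H^1(T_X(\log))$.

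Now I run the smoothness argument. Since locally trivial deformations are unobstructed (Assumption \ref{as:local-unobstructed}), $M_X$ is smooth at $X$ of dimension $h^1(T_X^0)$, and $J_X$ is smooth of dimension $h^1(\Sing X) = h^1(T_X^1)$ (the latter equality because $T_X^1$ is a line bundle on $\Sing X$ with $c_1=0$, so $H^1(T_X^1)\cong H^1(\mathcal{O}_{\Sing X})$). Therefore $\dim_X O^{-1}(\mathcal{O}_{\Sing X}) \geq h^1(T_X^0) - h^1(T_X^1)$, with equality and smoothness at $X$ precisely when $d_X O$ is surjective. By hypothesis the fiber \emph{is} smooth at $X$; I must upgrade ``smooth'' to ``$d_X O$ surjective.'' The cleanest route: smoothness of the scheme-theoretic fiber at $X$ forces its dimension to equal the tangent space dimension $h^1(T_X(\log))$, while the long exact sequence above gives $h^1(T_X(\log)) = h^1(T_X^0) - h^1(T_X^1) + h^2(T_X(\log))$; combined with the lower bound $\dim \geq h^1(T_X^0)-h^1(T_X^1)$ on \emph{every} component through $X$, a dimension count pins down $h^2(T_X(\log))$, hence $\ad_\xi$ surjective on $H^1$, hence by Theorem \ref{thm:log-def} a smoothing exists. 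Then apply Artin approximation as in Theorem \ref{thm:log-def} to pass from formal to analytic.

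The main obstacle is the last step — honestly justifying that smoothness of the fiber at the point $X$ implies surjectivity of $d_X O$ (equivalently $H^2(T_X(\log))=0$). This is the standard ``smooth fiber $\Rightarrow$ submersion'' principle, but it requires care because a priori $O^{-1}(\mathcal{O}_{\Sing X})$ could be smooth at $X$ with dimension \emph{larger} than expected if $M_X$ jumped in dimension — here Assumption \ref{as:local-unobstructed} rules that out by making $M_X$ smooth — and I need to know the fiber is cut out by the right number of equations (that $O$ is, near $X$, a morphism to the smooth variety $J_X$ whose fiber-dimension lower bound $h^1(T_X^0)-h^1(T_X^1)$ holds componentwise). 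Granting the lower bound on fiber dimension, smoothness of the fiber at $X$ gives $h^1(T_X(\log)) = \dim_X O^{-1}(\mathcal{O}_{\Sing X}) \geq h^1(T_X^0) - h^1(T_X^1)$, and comparing with $h^1(T_X(\log)) = h^1(T_X^0) - h^1(T_X^1) + h^2(T_X(\log))$ and the opposite inequality coming from semicontinuity/the exact sequence forces $h^2(T_X(\log)) = 0$. I would present this as: the fiber is smooth, hence its local dimension equals $\dim_{\mathbb{C}} H^1(T_X(\log))$; but the fiber of $O$ through any point has dimension at least $\dim M_X - \dim J_X = h^1(T_X^0) - h^1(T_X^1)$; since the exact sequence shows $\dim H^1(T_X(\log)) = h^1(T_X^0)-h^1(T_X^1)+\dim H^2(T_X(\log))$, smoothness forces $H^2(T_X(\log))=0$, and Theorem \ref{thm:log-def} finishes the proof.
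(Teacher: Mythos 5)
Your proposal is correct and follows essentially the same route as the paper: identify $d_XO$ at the point $X$ with $\ad_\xi\colon H^1(T_X^0)\to H^1(T_X^1)$ via Theorem \ref{thm:txlog}, deduce surjectivity of this differential from smoothness of the fiber, and conclude by Theorem \ref{thm:log-def}. The paper simply asserts ``fiber being smooth implies surjectivity of this differential,'' whereas you supply the dimension count (every fiber component has dimension $\geq h^1(T_X^0)-h^1(T_X^1)$, smoothness forces the tangent-space dimension $h^1(T_X(\log))$ to equal this, hence $H^2(T_X(\log))=0$) that justifies it; this is a welcome elaboration, not a different argument.
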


\begin{proof}
Inspecting the proof of the Theorem \ref{thm:txlog} we see that $\ad_{\xi}$ is a differential of the map $O$ in the point $X$. Fiber being smooth implies surjectivity of this differential, which, in turn, implies existence of smoothing by Theorem \ref{thm:log-def}.
\end{proof}

Our approach to calculation of $H^*(T_X(\log))$ is now the following: describe $M_X$ as explicitly as we can in terms of gluing of components and calculate $O$.

Summarising this section: results above follow easily from the results of Friedman and Kawamata-Namikawa, Corollary \ref{cor:moduli-smooth} is used (in some form) by Friedman. The references are for SNC case, however actual proofs do work for NC case just as well - at the cost of a bit more confusing notation. We tried, however, to present our own point of view, based largely on geometric consideration of the Kuranishi mapping. We conclude the section with additional theorem, an improvement of Corollary \ref{cor:moduli-smooth} which seems to be not known in the log deformation theory. This fact is not needed in the current work but might find its applications later.

\begin{theorem}\label{thm:cool-smooth}
Suppose $O^{-1}(\mathcal{O}_{\Sing(X)})$ has the dimension $\dim(M_X)-\dim(J_X) = h^1(T_X^0) - h^1(T_X^1)$, yet not necessary smooth or even reduced. There still exists the surface $X'$ with $\Delta_{X'}$ homeomorphic to $\Delta_X$, admitting the smoothing deformation.
\end{theorem}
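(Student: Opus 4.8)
The plan is to reduce to Corollary \ref{cor:moduli-smooth} by a deformation-to-general-position argument, using that $\Delta_{X'}$ is locally constant in any locally trivial family. First I would observe that by Theorem \ref{thm:txlog} the scheme $Z := O^{-1}(\mathcal{O}_{\Sing(X)})$ is precisely the formal locus of locally trivial deformations preserving $d$-semistability, with tangent space $H^1(T_X(\log))$ at the point $[X]$; and $O\colon M_X\to J_X$ has differential $\ad_\xi$ at $[X]$ by the same inspection used in Corollary \ref{cor:moduli-smooth}. The dimension hypothesis says $\dim Z = \dim M_X - \dim J_X$, i.e.\ $Z$ cuts out $O^{-1}(\mathcal{O}_{\Sing(X)})$ in the expected codimension, so $O$ is \emph{dominant} onto a neighborhood of $\mathcal{O}_{\Sing(X)}$ in $J_X$: indeed a fiber of the expected dimension over a point forces (by upper semicontinuity of fiber dimension and the fact that $\dim M_X = \dim Z + \dim J_X$) the image of $O$ to contain a neighborhood of that point.

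Next I would choose a general point $\eta$ of $J_X$ close to $\mathcal{O}_{\Sing(X)}$ and a point $[X']\in M_X$ with $O([X']) = \mathcal{O}_{\Sing(X)}$ lying on a component of $Z$ of the expected dimension and generic enough that the fiber $O^{-1}(\mathcal{O}_{\Sing(X)})$ is \emph{smooth} at $[X']$. This is the crux: one must produce, inside the possibly-nonreduced and possibly-singular scheme $Z$, a point where $Z$ is smooth of the expected dimension. The argument I have in mind: by generic smoothness of the dominant morphism $O$ over the (reduced, smooth) target $J_X$ in characteristic zero, there is a dense open $V\subset \overline{O(M_X)}$ over which $O$ is smooth; since $O$ is dominant, $\mathcal{O}_{\Sing(X)}$ lies in $\overline{V}$, but it need not lie in $V$ itself. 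To fix this, I would instead run generic smoothness on the restriction of $O$ to a well-chosen component $M^0\subset M_X$ dominating $J_X$ with $\dim M^0 = \dim Z' + \dim J_X$ where $Z' = Z\cap M^0$ is a component through a chosen point; pick a general fiber $O|_{M^0}^{-1}(\eta)$, which is smooth, and then — this is the step I expect to be the main obstacle — transport smoothness back from the general fiber over $\eta$ to \emph{some} fiber over $\mathcal{O}_{\Sing(X)}$. One clean way: choose an analytic arc $\gamma\colon D\to J_X$ with $\gamma(0)=\mathcal{O}_{\Sing(X)}$, $\gamma(t)=$ general for $t\neq 0$, pull back $M^0\times_{J_X} D$, and observe that its central fiber $O^{-1}(\mathcal{O}_{\Sing(X)})$ acquires a point where the total space of the pullback family is smooth and the central fiber is a Cartier divisor of the expected dimension — hence generically reduced — so that at a general point of it the fiber is smooth over $0\in D$, and therefore $\ad_\xi$ at that point is surjective.

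Finally, at such a point $[X']$ we have $H^2(T_{X'}(\log)) = 0$ by the equivalent formulation in Theorem \ref{thm:log-def}, so $X'$ is smoothable; and since $[X']$ and $[X]$ lie in the same connected locally trivial family $M_X$ (all the components we used are connected to $[X]$ by construction, or one passes through $M^0$ which contains $[X]$ in its closure — here one should be slightly careful and, if necessary, only claim $[X']$ lies on a component of $Z$ through $[X]$), the dual complex is unchanged: $\Delta_{X'}$ is homeomorphic to $\Delta_X$ because locally trivial deformations preserve the entire stratification $Z_n\subset\cdots\subset Z_1\subset X$ up to homeomorphism, hence preserve the combinatorial data $(\mathcal{F}_k, B(\hat F))$ defining $\Delta_X$. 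This gives the surface $X'$ with the asserted properties. The one genuine subtlety to nail down in writing this carefully is the connectedness bookkeeping — ensuring the smooth point of $Z$ we land on is joined to $[X]$ within the locally trivial deformation space — which one can always arrange by working on the irreducible component of $Z$ (equivalently of $M_X$) containing $[X]$ and noting the dimension hypothesis may be assumed to hold on that component.
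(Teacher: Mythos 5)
Your reduction to Corollary \ref{cor:moduli-smooth} breaks down at exactly the step you flag as the crux. The hypothesis of the theorem allows $O^{-1}(\mathcal{O}_{\Sing(X)})$ to be non-reduced, and it can be non-reduced \emph{everywhere} along the component through $[X]$ (think of the local model $O(x,y)=x^2$ over a disk: the fiber over $0$ is a Cartier divisor of the expected dimension in a smooth total space, yet nowhere reduced). In that situation there is no point $[X']$ of the fiber at which the differential of $O$ (i.e.\ $\ad_\xi$) is surjective, so generic smoothness over a punctured neighbourhood of $\mathcal{O}_{\Sing(X)}$, or your arc-pullback variant, cannot transport smoothness into the special fiber: ``Cartier divisor of expected dimension, hence generically reduced'' is simply false. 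Since this is precisely the case the theorem is meant to cover, the argument cannot be repaired within the class of locally trivial deformations of $X$; indeed the statement only promises $\Delta_{X'}$ \emph{homeomorphic} to $\Delta_X$, not that $X'$ is itself a locally trivial deformation of $X$, and that weakening is forced.

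The paper's proof uses a different mechanism, working inside the Kuranishi space rather than on $J_X$. Writing $Z'=\{K/t=0\}$ and $H=\{t=0\}$ for the locally trivial locus, one has $\dim Z'\geq d+1-r$ while the hypothesis gives $\dim(Z'\cap H)=d-r$; hence $Z'$ is not contained in $H$, and there is an arc $\gamma$ in $Z'$ through $[X]$ meeting $H$ with some finite tangency order $k$. The corresponding one-parameter family is locally trivial to order $k-1$, its $k$-th Kodaira--Spencer invariant is a nonzero multiple of $\xi$, and locally it has the form $x_1\cdots x_s=\tau^k$; so the generic fiber is smooth and the total space has only toroidal singularities. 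A toroidal resolution followed by semistable reduction then produces the desired $X'$ with homeomorphic dual complex. To salvage your approach you would need to replace ``find a smooth point of the fiber'' by this kind of higher-order transversality argument; as written, the proposal proves only the already-known Corollary \ref{cor:moduli-smooth} under an extra (unverifiable) genericity assumption.
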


Note, that in standard deformation theory, it is enough to prove that $h^1(T_X) > h^2(T_X)$ to guarantee the existence of deformations. In the logarithmic deformation theory, though, it is not the case. This is, basically, our version of obstructed deformation theory. Curiously, the proof uses almost no log deformations and is mainly about the geometry of the Kuranishi space.

\begin{proof} Assume the notation from the proof of Thm. \ref{thm:txlog}, i.e., choose the non-canonical splitting of $H^1(T_X^0) \rightarrow H^1(T_X) \rightarrow H^0(T_X^1) = \mathbb{C}$. Pick the coordinate $t$ to be the pullback of the standard coordinate on $H^0(T_X^1)$ dual to $\xi$, and $(x_1, ..., x_d)$ - the rest of coordinates, defining splitting. We now fix our attention to an open neighborhood of the zero in which Kuranishi mapping converges.

Denote the analytic subset $K=0$ as $Z$, and the hyperplane $t=0$, which is an image of $H^1(T_X^0)$ as $H$.

It is easy to see that $H \subset Z$ - that is true because we have demanded in Assumption \ref{as:local-unobstructed} that locally trivial deformations are unobstructed, and $H^1(T_X^0)$ is the space of local deformations.

That implies that $t|K$ - because $K$ is the defining equation of $Z$, and $t$ is prime. Denote as $Z'$ the analytic subset $K/t = 0$.

Now, the set $Z' \cap H$ is the set of locally trivial deformations that preserve $d$-semistability. We have done it in the smooth case by calculation in Theorem \ref{thm:txlog}, but it can be proved geometrically, too. $Z' \cap H$ is clearly the same as $\Sing(Z) \cap H$, i.e., it is the subscheme of such points $p \in H$ that have tangent space bigger than $T_pH$. However, being $d$-semistable exactly means having $H^1(T_X) \neq H^1(T_X^0)$.
\begin{center}
\includegraphics[clip, width=0.7\linewidth]{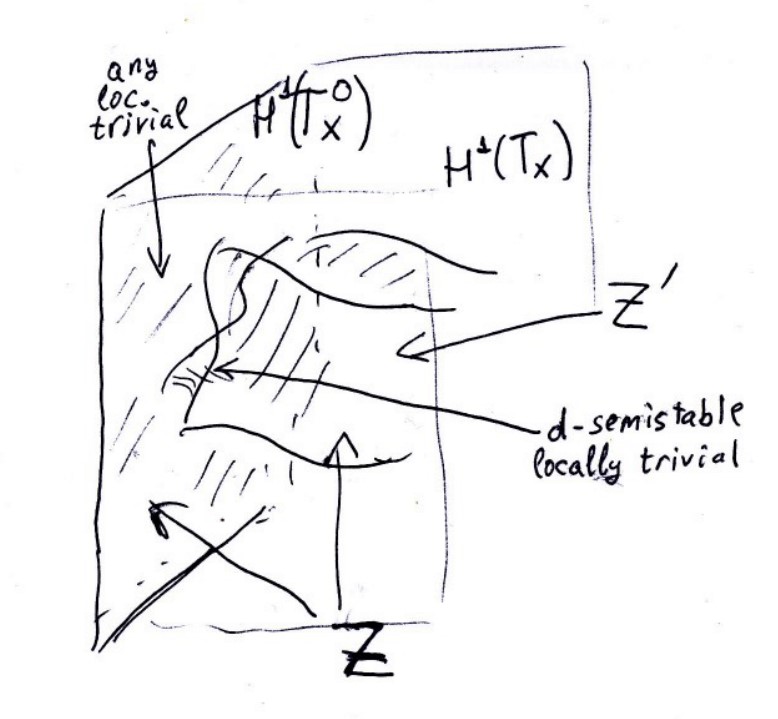}
\end{center}
Now, we proceed with dimension counting. Denote $h^1(T_X^0) = d$, $h^1(T_X) = d+1$, $h^2(T_X) = h^1(T_X^1) = r$.

$Z'$, being a fiber of the map $K/t: H^1(T_X) \rightarrow H^2(T_X)$, has $\dim(Z') \geq d+1-r$. By assumption of the theorem, $\dim(Z' \cap H) = d-r$. It implies that $Z' \setminus H$ is non-empty, moreover, we have an arc $\gamma: D \rightarrow Z'$ which is not contained in $H$ and $\gamma(0) = X$. It also implies $\dim(Z') = d+1-r$. Denote the coordinate on a disk $\tau$.

This $\gamma$ corresponds to some flat family $\tilde{X}$ with central fiber $X$, which, however, does not necessarily have the smooth total space.

Suppose $\gamma$ has tangency $k$ with $H$. Then, we would like to consider $k$-th Kodaira-Spencer differential $\partial_{\tau}^k (t \circ \gamma) (d\tau)^k = \lambda \xi (d\tau)^k$. Let's see that it is defined correctly.

Explicitly, consider the $(k-1)$-st order of deformation \[ \tilde{X}^{(k-1)} = \tilde{X} \underset{D}{\times} \Spec(\mathbb{C}[\tau]/\tau^k)\] It is still locally trivial at this stage, so one can promote $\alpha^*$ to it (it is a choice, we choose to do it in such a way that $\xi$ is dual to $dt$ in $H^1(T_X')$ for any locally trivial deformation $X'$, and our choice of $t$ was arbitrary). Then, possible liftings of $\tilde{X}^{(k-1)}$ to the family over $\mathbb{C}[\tau]/\tau^{k+1}$ are a torsor over $H^1(T_{\tilde{X}^{(k-1)}})$, and this torsor has a distinguished codimension $1$ subspace - the subspace of locally trivial liftings. The quotient by this subspace is, then, canonically $H^0(T_{\tilde{X}^(k-1)}^1) = \mathbb{C}$, which provides us with the desired mapping - so, as expected, $k$-th order Kodaira-Spencer derivative is not canonical, yet its projection which measures the deviation from the locally trivial deformation is.

Now, consider the same family in the neighborhood of a singular point on $X$, denote its germ as $\mathcal{X}$. The deformations of the germ are controlled by $\Gamma(T_{\mathcal{X}}^1)$, and indeed the deformation is locally trivial up to the $k$-th order, so it has $k$-th Kodaira-Spencer differential, which is equal to $\lambda \alpha^*(d\tau)^k$.

That means that the deformation locally has the form $x_1 ... x_s = \tau^k$, here $s = 2, 3$, which implies, first, that the generic fiber is indeed smooth, and, second, that the singularities of the family are toroidal. By the results of \cite{dFKX} we can resolve them and apply semistable reduction theorem without changing the homeomorphism type of the dual complex.
\end{proof}

\section{Example}

We present our example. Let us recall that we are interested in constructing smoothable geometrically maximally degenerate normal crossing surface $X$ with non-contractible, collapsible dual complex. We decided to use the simplest example of such complex, topological duncehat (see Def.\ref{def:dunce}).

That means that $X$ is some surface with one irreducible component, self-intersecting in one curve, and having one triple point. Consider the normalization $\nu:\hat{X}\rightarrow X$ and the gluing locus $\nu^{-1}(\Sing(X))$. Due to the absence of branch switching (which is due to $\Sing(X)$ having rational normalization) the gluing locus is a union of two distinct rational curves, which we denote $P, Q \subset \hat{X}$. Their normalizations are denoted $\hat{P}, \hat{Q} \overset{\nu_P, \nu_Q}{\longrightarrow} P, Q$ respectively. These curves only have nodal intersections and self intersections and are glued by some mapping $\varphi: \hat{P} \rightarrow \hat{Q}$. It is also clear that the gluing locus $P \cup Q$ should have three nodal points in total - as they glue by three into one triple point on $X$.

That leaves two possibilities for the combinatorial structure of $X$:

\begin{itemize}
    \item \textbf{Wrong case} $P$ and $Q$ are two smooth curves intersecting each other in three nodes.
    \item \textbf{Right case} $P$ and $Q$ both have one node and intersect each other once.
\end{itemize}

It turns out that the first case the dual complex is actually the different triangulated space, the triangle $[012]$ with edges $[01], [12]$ and $[20]$ glued together. It has a fundamental group $\mathbb{Z}/3\mathbb{Z}$, and is not too interesting to us (while our program can, indeed, be carried out for it).

The second case, however, is what we are interested in. Denote as $p_N, q_N$ the nodes of $P, Q$, as $n$ the intersection of $P$ and $Q$, as $p_1, p_2$ - preimages of $p_N$ on $\hat{P}$ (ordered arbitrarily), as $q_1, q_2$ - preimages of $q_N$ on $\hat{Q}$, as $p_3, q_3$ - preimages of $n$ on $\hat{P}, \hat{Q}$ - respectively,

Now, recall that $\hat{P}, \hat{Q}$ are genus $0$, so the map $\varphi$ can be defined by images of three points. We now define \[\varphi(p_1) = q_2\]
\begin{equation}\label{eq:phi-def} \varphi(p_2) = q_3\end{equation}\[\varphi(p_3) = q_1\]

Let us explain why this gluing indeed has $\Delta_X$ as its dual complex. Let us denote $b_{P;1}, b_{P;2}$ the branches of $P$ passing through $p_N$ which have $p_1, p_2$ respectively on their lift to $\hat{P}$, and do similarly for $b_{Q;1}, b_{Q;2}$. Denote as $b_{P; n}$, $b_{Q; n}$ the branches of $P$ and $Q$ passing through $n$.

\newpage

Then, it is readily seen that the neighborhoods of $p_N, q_N, n$ glue into the neighborhood of the coordinate cross in $\mathbb{C}^3$ - $b_{P;1}$ glues to $b_{Q;2}$ which passes through $b_{Q;1}$ which glues to $b_{P;n}$ which passes through $b_{Q;n}$ which glues to $b_{P;2}$ which passes through $b_{P;1}$ (see picture below).

\begin{center}
\includegraphics[clip, width=0.5\linewidth]{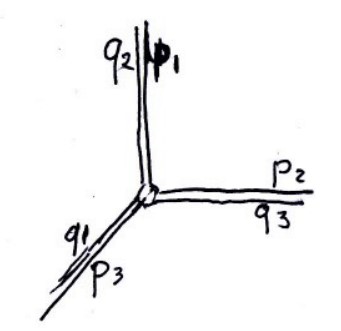}
\end{center}

Now, the edges of the triangle in $\Delta_X$ correspond to the branches of $\Sing(X)$ passing through the triple point (namely, $b_{P;1} = b_{Q;2}, b_{Q;1} = b_{P;n}$, $b_{Q;n} = b_{P;2}$). 

Vertices of the triangle correspond to the branches of $\hat{X}$ passing through the triple point, let us denote $[0]$ the branch passing through $p_N$, $[1]$ - passing through $q_N$, $[2]$ - branch passing through $n$.

The edge orientation space is the ordering of branches of $\hat{X}$ passing through the curve corresponding to the edge. Then, set that the edges are oriented from the branch passing through $P$ to the one passing through $Q$. This convention gives the following orientation of the edges: $[01], [21], [02]$. Up to the permutation, it gives the gluing of the duncehat complex. (see picture below)

\begin{center}
\includegraphics[clip, width=0.5\linewidth]{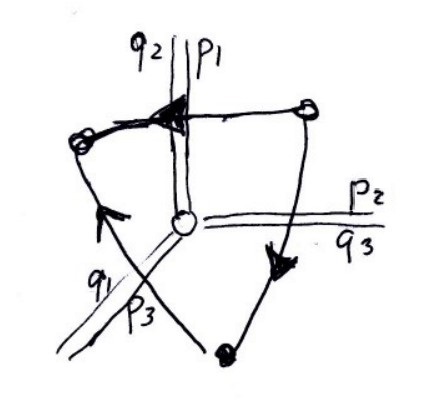}
\end{center}

Now, we would like to find appropriate surface and curves on it with combinatorics described above. Another thing to look for is Kulikov's triple point formula, which states that

\begin{proposition}
For $\nu_C: C \rightarrow \Sing(X)$ any normalization of the irreducible component of $\Sing(X)$ we have:

\[c_1(\nu_C^*T_X^1) = c_1(N(C; B_1)) + c_1(N(C;B_2)) + \tau\]

where $B_1, B_2$ - branches of $X$ passing through $C$, $\tau$ - amount of triple points on $C$.
\end{proposition}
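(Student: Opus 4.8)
The plan is to compute $c_1(\nu_C^* T_X^1)$ by working in the extended log atlas and assembling local contributions, exactly in the spirit of Definition \ref{def:tx1-log} and Lemma \ref{lem:tx1-equiv}. Recall from Lemma \ref{lem:tx1-equiv} that near a generic point of an irreducible component $C \subset \Sing(X)$, where exactly two branches $B_1, B_2$ meet, $T_X^1$ is the line bundle on $C$ with trivializing local section $\alpha^*$ (the dual of $\alpha = d(x_1 x_2)$ in the normal sequence, restricted to $C$) and transition functions $z^{ij}_1 z^{ij}_2$ for the change of structure coordinates. So the first step is to identify, on a Zariski-open $U \subset C$ away from the triple points, the natural isomorphism $\nu_C^* T_X^1|_U \cong N(C;B_1)^\vee|_U \otimes N(C;B_2)^\vee|_U$ coming from $\alpha^* \leftrightarrow \partial_{x_1} \wedge \partial_{x_2}$ restricted to $C$ — equivalently, observe that $\alpha = x_2\, dx_1 + x_1\, dx_2$ pairs the generators of the conormal bundles of the two branches, so the failure-of-exactness cokernel in eq.\ref{eqn:normal-sequence} is canonically $N_{C/B_1}^\vee \otimes N_{C/B_2}^\vee$ there. (A sign/orientation check: the formula as stated has $c_1(N(C;B_i))$ rather than its dual, so I will either absorb a sign into the convention for $N(C;B_i)$ used in the proposition, or track that $T_X^1$ restricted to $C$ is $\mathrm{Ext}^1$ which flips the sign — this bookkeeping is the kind of routine point I will not belabor, but it must come out consistent with Lemma \ref{lem:Kul-c1}.)

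The second step is the local analysis at a triple point $x_0 \in C$. Near $x_0$ the surface $X$ looks like the coordinate cross $x_1 x_2 x_3 = 0$ in $\mathbb{C}^3$, and $C$ is one of the three coordinate lines, say $\{x_2 = x_3 = 0\}$; the two branches $B_1 = \{x_2 = 0\}$, $B_2 = \{x_3 = 0\}$ carry it. Here the relevant conormal generator is $\alpha = d(x_1 x_2 x_3)$, whose restriction to $X$ has cokernel of $e^*$ supported on $\Sing$, and the point is that passing from the generic local model $d(x_1 x_2)$ on $C$ to the triple-point model $d(x_1 x_2 x_3)$ introduces one extra factor of a free coordinate vanishing to order one along $C$. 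So I will compare the two trivializations $\alpha^*$ and compute that they differ by a unit times a coordinate with a simple zero at $x_0$; this is the mechanism producing the "$+\tau$" in the degree, one for each triple point on $C$.

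The third step is globalization: cover $C$ by the two types of charts above, take the sheaf $\nu_C^* T_X^1$ with its transition data, and compute its degree as the sum of (i) $\deg N(C;B_1)^\vee$, (ii) $\deg N(C;B_2)^\vee$, and (iii) $+1$ for each triple point, by the standard "transition functions on a curve" count — equivalently, build an explicit meromorphic section of $\nu_C^* T_X^1$ out of a rational section of $N(C;B_1)^\vee \otimes N(C;B_2)^\vee$ and read off its zeros and poles, which picks up exactly the triple points with multiplicity one. Summing gives $c_1(\nu_C^* T_X^1) = c_1(N(C;B_1)) + c_1(N(C;B_2)) + \tau$ up to the sign convention fixed in step one.

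The main obstacle I expect is step two: getting the local comparison at the triple point scrupulously right, including the orientation/branch-ordering data (which enters because $T_X^1$ is twisted by the orientation line $L_\mu$ of the relevant simplex, as in the resolution (1)), and making sure the "order one zero per triple point" is not off by the wrong sign or absorbed incorrectly into the normal-bundle terms — the no-branch-switching hypothesis is what guarantees the branches $B_1, B_2$ are globally well-defined along $C$ so that the count is unambiguous, and I will need to invoke it explicitly. Everything else is the standard degree-counting for line bundles on a smooth rational curve and is routine once the local models are pinned down.
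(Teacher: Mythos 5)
Your strategy is essentially the paper's: the paper never proves this proposition in isolation, but establishes the stronger sheaf-level statement $\nu_C^*T_X^1 \cong N(C;B_1)\otimes N(C;B_2)([t_1]+\cdots+[t_k])$ in Lemma \ref{lem:tx1-explicit}, by exactly the kind of local log-chart comparison you propose (carried out there on the dual bundle via the canonical section $u_1\,du_2\otimes du_3$ of $N^*_{C;B_1}\otimes N^*_{C;B_2}(-[t])$ near a triple point); the degree formula is then immediate. So the architecture of your argument — generic identification along $C$, a simple zero at each triple point, then a degree count on the normalization — is the right one and matches the source.

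The one point you must fix, and it is exactly the content-bearing sign of the formula, is the identification in your first step. The cokernel of $e^*$ in eq.\ref{eqn:normal-sequence} is a quotient of $N_X=\Hom(I_X/I_X^2,\mathcal{O}_X)$, so along the double locus away from triple points it is canonically $N(C;B_1)\otimes N(C;B_2)$, \emph{not} $N(C;B_1)^\vee\otimes N(C;B_2)^\vee$: in the local model $x_1x_2=0$ the generator $x_1x_2$ of $I_X/I_X^2$ identifies $I_X/I_X^2|_C$ with $N^\vee_{C;B_1}\otimes N^\vee_{C;B_2}$, and $T_X^1$, being $\lExt^1(\Omega_X,\mathcal{O}_X)$, sits on the dual side. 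This is not a convention one can absorb: with your conormal identification and your (correct) ``$+1$ per triple point'' the count would read $-c_1(N(C;B_1))-c_1(N(C;B_2))+\tau$, which is false, and consistency with Lemma \ref{lem:Kul-c1} does not detect the error since that lemma only asserts $c_1(T_X^1)=0$ for smoothable $X$. Once corrected, your triple-point step is fine: at $x_1x_2x_3=0$ the generator $x_1x_2x_3$ of $I_X/I_X^2$ is the free coordinate $x_1$ times the generic generator $x_2x_3$, so $I_X/I_X^2|_C \hookrightarrow N^\vee_{C;B_1}\otimes N^\vee_{C;B_2}$ with a simple zero at the triple point, and dualizing gives $\nu_C^*T_X^1\cong N(C;B_1)\otimes N(C;B_2)(\sum_i[t_i])$, hence the stated formula with no sign juggling.
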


Kulikov's condition then states that aforementioned quantity is zero, and is a topological requirement for $d$-semistability (see Lemma \ref{lem:Kul-c1})

\begin{definition}
Consider a projective plane $\mathbb{P}^2$ with a pair of nodal cubics (denote them $P'$, $Q'$). Suppose these cubics do not pass through each other's nodes. Blow up $\mathbb{P}^2$ in $8$ of $9$ of their intersections (if $P', Q'$ are not tranverse, we blow up multiple intersections multiple times). Additionally, blow up any smooth point on the proper preimage of $P'$ except the ninth, remaining intersection. Denote the resulting blow up as $\hat{X}$, the proper preimages of $P'$ and $Q'$ as $P$ and $Q$. Make the choices of $p_1, p_2$ and $q_1, q_2$. The data above is called the \textbf{Main Construct}.
\end{definition}

The rest of proof is concerned with the space of moduli of these constructs and the mapping $O$.

\begin{proposition} Kulikov's triple point condition is satisfied for the main construct.
\end{proposition}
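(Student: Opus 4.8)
The plan is to apply the triple point formula just proved to the unique irreducible component of $\Sing(X)$ and to read off each term directly from the geometry of the Main Construct.

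First I would fix the combinatorial data. Since $\Sing(X)$ is irreducible it has a single normalization $\nu_C\colon C\to\Sing(X)$, and $C\cong\mathbb{P}^1$. The normalization $\hat X$ of $X$ is the blown-up plane of the Main Construct, $\nu^{-1}(\Sing(X))=P\cup Q$, and $\nu$ glues $P$ to $Q$ along $\varphi\colon\hat P\xrightarrow{\sim}\hat Q$; hence $C$ is identified with $\hat P$ through $\nu_P$ and with $\hat Q$ through $\nu_Q\circ\varphi$. The two branches $B_1,B_2$ of $X$ along $\Sing(X)$ correspond, on $\hat X$, to neighbourhoods of the curves $P$ and $Q$; so $N(C;B_1)$ and $N(C;B_2)$ are the normal bundles, pulled back to $C$, of the immersions $\hat P\to\hat X$ with image $P$ and $\hat Q\to\hat X$ with image $Q$, while $\tau$ is the number of points of $C$ lying over the unique triple point $T$ of $X$.

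Next I would compute these quantities. As $\hat X$ is $\mathbb{P}^2$ blown up at nine points — eight of them being eight of the nine intersection points of $P'$ and $Q'$, the ninth a general point of $P'$ — the proper transform $P$ is anticanonical: $P\sim -K_{\hat X}$, $P^2=9-9=0$, $p_a(P)=1$; while $Q\sim -K_{\hat X}+E$ with $E$ the exceptional curve of the extra blow-up, so $Q^2=9-8=1$, $p_a(Q)=1$, and $P\cdot Q=1$. For a rational curve $C'$ on $\hat X$ with $\delta$ nodes, adjunction $2\delta-2=(C')^2+K_{\hat X}\cdot C'$ gives that the degree of the normal bundle of its normalization is $-K_{\hat X}\cdot C'-2=(C')^2-2\delta$; since each of $P$ and $Q$ has exactly one node this yields $c_1(N(C;B_1))=P^2-2=-2$ and $c_1(N(C;B_2))=Q^2-2=-1$. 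Finally, $T$ is the common image of the node $p_N$ of $P$, the node $q_N$ of $Q$ and the point $n=P\cap Q$; pulled back to $C\cong\hat P$ these are $\{p_1,p_2\}$ (over $p_N$) and $\{p_3\}$ (over $n$), so $\tau=3$.

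Putting this together, $c_1(\nu_C^*T_X^1)=c_1(N(C;B_1))+c_1(N(C;B_2))+\tau=(-2)+(-1)+3=0$, which is exactly Kulikov's triple point condition. The step that requires care, and the likeliest source of a wrong answer, is the non-SNC bookkeeping: because $\Sing(X)$ is itself singular — its three nodal points $p_N,q_N,n$ being identified into the single triple point $T$ of $X$ — one must distribute those three points correctly between the two normal-bundle contributions (each node of $P$, resp. of $Q$, lowering $c_1(N(C;B_i))$ by $2$) and the triple-point count $\tau$; counting just the self-intersections $P^2,Q^2$, or just the number of triple points of $X$, would not give $0$.
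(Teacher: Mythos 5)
Your proof is correct and follows essentially the same route as the paper: both arguments reduce to computing the degrees of the normal bundles of the immersions $\hat P\to\hat X$ and $\hat Q\to\hat X$ (getting $-2$ and $-1$) and adding $\tau=3$. The only cosmetic difference is bookkeeping — the paper starts from the ``proper self-intersection'' $7$ of a nodal cubic in $\mathbb{P}^2$ and subtracts $1$ per blow-up, while you compute $P^2=0$, $Q^2=1$ on $\hat X$ and then correct by $2\delta$ via adjunction; these are the same calculation.
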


\begin{proof}
The (proper) self-intersection of the nodal cubic is $7$. Every blowup lowers both self-intersections by $1$, and one additional blowup makes the self intersection of $P$ to be $-2$, and self-intersection of $Q$ to be $-1$. Adding the amount of triple points on the normalization of $\Sing(X)$, which is $3$, we obtain $0$, as predicted by Kulikov's condition.
\end{proof}

\section{Obstruction mapping}

We now would like to understand the obstruction mapping. We need a few technical preparations.

\subsection{Setup}
At first, let us state precisely what are the spaces $M_X$ and $J_X$ are.

\begin{definition}
$M_X$ is the moduli space of constructs, i.e. moduli space of pairs of nodal cubics $P$, $Q$ on $\mathbb{P}^2$ (we have called them $P'$, $Q'$ in previous section, but drop the superscript from now on as it is unlikely to cause confusion).

These cubics are subject to following condiitons:

\begin{itemize}
    \item $P$ and $Q$ are non-degenerate nodal cubics - so, they do not develop cusp or do not become reducible.

\item $P$ doesn't pass through the node of $Q$ and vice versa.

\end{itemize}

Also, these cubics are subject to following additional choices:

\begin{itemize}

\item $\nu_P^{-1}p_N$ and $\nu_Q^{-1}q_N$ are ordered, them in order are denoted $(p_1, p_2)$, $(q_1, q_2)$.

\item An intersection point $n \in P \cap Q$ is chosen.

\item An additional point $b \in P$, satisfying $b \neq n, p_N$, is chosen.
\end{itemize}

These objects are considered up to the projective plane transformations.
\end{definition}

\begin{proposition}
$\dim(M_X) = 9$
\end{proposition}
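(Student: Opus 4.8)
The plan is to count parameters and subtract the dimension of the symmetry group. I would begin by listing the moduli of the geometric data. A nodal cubic in $\mathbb{P}^2$ is a point in the projective space $\mathbb{P}^9 = \mathbb{P}(H^0(\mathbb{P}^2,\mathcal{O}(3)))$ of all plane cubics, cut down by one condition (the discriminant vanishes, i.e.\ the cubic is singular), so nodal cubics form an $8$-dimensional family; the open conditions (irreducible, one node, no cusp) do not change the dimension. Hence an unordered pair $(P,Q)$ of nodal cubics contributes $8+8 = 16$ parameters, and the open conditions ``$P$ avoids the node of $Q$ and vice versa'' are again open, not affecting the count.

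Next I would account for the discrete and continuous extra choices. Ordering the two points of $\nu_P^{-1}(p_N)$ and the two points of $\nu_Q^{-1}(q_N)$ is a finite choice (it does not change dimension). Choosing the intersection point $n \in P \cap Q$ is likewise a finite choice: $P \cdot Q = 9$ by B\'ezout, so generically there are finitely many intersection points (and even in the non-transverse case one works with the scheme $P\cap Q$ of length $9$, still finite), contributing $0$ to the dimension. The only genuinely continuous extra datum is the additional point $b \in P$, which, since $P$ is a curve, contributes $1$ parameter. So before quotienting we have $16 + 1 = 17$ parameters.

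Finally I would quotient by $\mathrm{PGL}_3(\mathbb{C})$, the group of projective plane transformations, which has dimension $8$. For a generic configuration of two nodal cubics the stabilizer in $\mathrm{PGL}_3$ is finite (indeed trivial): a projective transformation fixing a generic pair of cubics must in particular fix the finite set of $9$ intersection points together with the two nodes, and eleven points in sufficiently general position have no nontrivial projective automorphism; one should remark that the Main Construct is built from cubics general enough that this holds. Therefore the quotient is taken freely up to finite stabilizers, and
\[
\dim(M_X) = 17 - 8 = 9.
\]

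The main obstacle is the last step: one must verify that the $\mathrm{PGL}_3$-action on the relevant locus of configurations has finite (generically trivial) stabilizers, so that the naive subtraction of $8$ is correct, and that $M_X$ is nonempty and of pure dimension near the point corresponding to $X$ rather than being cut down further by some unexpected incidence. Granting that the configuration underlying the Main Construct is generic in this sense, the computation above is just bookkeeping. \hfill\(\square\)
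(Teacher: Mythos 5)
Your count is correct and is essentially the paper's own argument: two nodal cubics at $8$ parameters each, plus $1$ for the point $b$, minus $8$ for $\mathrm{PGL}_3$, with the orderings and the choice of $n$ being finite data. The extra care you take about finite stabilizers and the discreteness of the auxiliary choices is sound but goes beyond what the paper records.
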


\begin{proof}
Each cubic has $8$ parameters. Projective transformations also subtract $8$, and one additional is for an additional point $b$.
\end{proof}

\begin{definition}
$J_X$ is the space of line bundles with $c_1 = 0$ on \newline $C = \mathbb{P}^1/(0 \sim 1 \sim \infty)$.
\end{definition}

\begin{proposition}
$\dim(J_X) = 2$. It is canonically equivalent to the two-dimensional torus $(\mathbb{C}^*)^3 / (\mathbb{C}^*)$ (quotient by the subgroup generated by $(1;1;1)$).
\end{proposition}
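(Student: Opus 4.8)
The plan is to compute $\operatorname{Pic}(C)$ directly from the normalization sequence and then read off its degree-zero part. Write $\nu\colon\mathbb{P}^1\to C$ for the normalization; it is an isomorphism away from the unique singular point $n\in C$, with $\nu^{-1}(n)=\{0,1,\infty\}$, and since $\nu$ is finite one has $R^1\nu_*=0$. Comparing units on $C$ with units on $\mathbb{P}^1$ yields a short exact sequence of sheaves of abelian groups on $C$
\begin{equation*}
1 \longrightarrow \mathcal{O}_C^* \longrightarrow \nu_*\mathcal{O}_{\mathbb{P}^1}^* \longrightarrow \mathcal{Q} \longrightarrow 1 ,
\end{equation*}
where $\mathcal{Q}$ is a skyscraper supported at $n$. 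The first step is to identify the stalk $\mathcal{Q}_n$. The local ring $\mathcal{O}_{C,n}$ sits inside $\mathcal{O}_{\mathbb{P}^1,0}\times\mathcal{O}_{\mathbb{P}^1,1}\times\mathcal{O}_{\mathbb{P}^1,\infty}$ as the triples $(f_0,f_1,f_\infty)$ with $f_0(0)=f_1(1)=f_\infty(\infty)$; evaluating at the three marked points, both $\mathcal{O}_{C,n}^*$ and $(\nu_*\mathcal{O}_{\mathbb{P}^1}^*)_n$ have the same kernel (units congruent to $1$ at the marked points), while the former surjects onto the diagonal $\mathbb{C}^*$ and the latter onto all of $(\mathbb{C}^*)^3$. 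Hence $\mathcal{Q}_n\cong(\mathbb{C}^*)^3/\mathbb{C}^*_{\mathrm{diag}}$.

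Next I would take the long exact cohomology sequence. Here $H^0(\mathcal{O}_C^*)=H^0(\mathbb{P}^1,\mathcal{O}^*)=\mathbb{C}^*$ — both consist of the constants — and the inclusion map between them is the identity, hence an isomorphism; $H^0(\mathcal{Q})=\mathcal{Q}_n=(\mathbb{C}^*)^3/\mathbb{C}^*$; $H^1(\mathcal{O}_C^*)=\operatorname{Pic}(C)$; and $H^1(\nu_*\mathcal{O}_{\mathbb{P}^1}^*)=H^1(\mathbb{P}^1,\mathcal{O}^*)=\operatorname{Pic}(\mathbb{P}^1)=\mathbb{Z}$, using $R^1\nu_*=0$ once more, while $H^1(\mathcal{Q})=0$ since $\mathcal{Q}$ is a skyscraper. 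The sequence therefore collapses to
\begin{equation*}
1 \longrightarrow (\mathbb{C}^*)^3/\mathbb{C}^* \longrightarrow \operatorname{Pic}(C) \xrightarrow{\ \deg\ } \mathbb{Z} \longrightarrow 0 .
\end{equation*}
A line bundle on $C$ has $c_1=0$ exactly when its pullback to $\mathbb{P}^1$ is trivial, i.e. when it lies in $\ker(\deg)$, so $J_X=\operatorname{Pic}^0(C)\cong(\mathbb{C}^*)^3/\mathbb{C}^*$, a $2$-dimensional torus, giving $\dim(J_X)=2$.

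It then remains to check that this identification is canonical, i.e. intrinsic to the labelled curve $C=\mathbb{P}^1/(0\sim1\sim\infty)$. A degree-zero line bundle $L$ on $C$ is the same datum as the trivial bundle on $\mathbb{P}^1$ together with an identification of its fibres over $0,1,\infty$, equivalently three isomorphisms from the canonical fibres of $\mathcal{O}_{\mathbb{P}^1}$ at $0,1,\infty$ onto the single fibre $L_n$; rescaling the trivialization of $L_n$ multiplies all three by one common scalar, reproducing precisely $(\mathbb{C}^*)^3$ modulo the subgroup generated by $(1;1;1)$, and this matches the connecting homomorphism above. The only mildly delicate point — and the place I would be most careful — is the local computation of $\mathcal{Q}_n$, namely checking that $\mathcal{O}_{C,n}^*\hookrightarrow(\nu_*\mathcal{O}_{\mathbb{P}^1}^*)_n$ has exactly the cokernel claimed; everything else is a formal consequence of the normalization sequence together with $R^1\nu_*=0$.
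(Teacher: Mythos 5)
Your argument is correct, and it reaches the statement by a different route than the paper. The paper's proof is bare-hands: pull $L$ back to $\mathbb{P}^1$, where it is trivial since $\deg=0$, pick a trivializing section $\kappa$, and record the triple $(\kappa(0),\kappa(1),\kappa(\infty))$ of nonzero vectors in the one-dimensional fibre over the triple point, well defined up to a common scalar — exactly the content of your final ``canonicity'' paragraph — with the verification that this map is bijective deferred to the general gluing Lemma \ref{lem:glue}. You instead run the normalization sequence of unit sheaves $1\to\mathcal{O}_C^*\to\nu_*\mathcal{O}_{\mathbb{P}^1}^*\to\mathcal{Q}\to 1$, identify the skyscraper stalk $\mathcal{Q}_n\cong(\mathbb{C}^*)^3/\mathbb{C}^*$, and read off $\operatorname{Pic}(C)$ from the long exact sequence; this packages the injectivity and surjectivity that the paper calls ``fairly obvious'' into exactness, and gives the whole Picard group together with the degree map for free, at the cost of the (standard, but worth a word more than ``$\nu$ is finite'') vanishing $R^1\nu_*\mathcal{O}_{\mathbb{P}^1}^*=0$, which here follows because small Zariski neighbourhoods of $\{0,1,\infty\}$ in $\mathbb{P}^1$ have trivial Picard group. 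The paper's explicit gluing-data description is the one actually used downstream (it is the single-component case of Lemma \ref{lem:glue}, and the obstruction map $O$ is computed in these terms), so your cohomological derivation and your last paragraph together recover both the dimension count and the concrete model the later calculations rely on.
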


\begin{proof}
Consider the line bundle $L$ over $C$. $\nu_C^{*}L$ is a trivial bundle on $\mathbb{P}^1$, consider its trivializing section $\kappa$. Then, $(\kappa(0), \kappa(1), \kappa(\infty))$ is a defined up to the multiplication by constant triple of elements in the $1$-dimensional space $L(0 \sim 1 \sim \infty)$. It gives the desired element in $(\mathbb{C}^*)^3 / (\mathbb{C}^*)$.

The fact that the described mapping is an isomorphism is also fairly obvious (see lemma \ref{lem:glue} further for details).
\end{proof}

\subsection{Line bundles on multicomponent curves}
Suppose $R$ is a curve with triple points only, and with rational components only. Denote by $\mathcal{S}=(R_1, ..., R_k)$ the set of normalizations of components of $\nu_i: R_i \rightarrow R$, and by $\mathcal{T} = (t_1, ..., t_s)$ the set of triple points.

Then, there is a bipartite graph, $\mathcal{S} \overset{\Gamma}{\leftrightarrow} \mathcal{T}$, connecting each component with triple points on it. We allow multiple edges - in case a component passes through a triple point multiple times.

For each point $t_i \in \mathcal{T}$ form the group $G_i$ = $(\mathbb{C}^*)^3$, with factors of the product labeled by edges of $\Gamma$ coming from $t$, and denote by $\delta_i: \mathbb{C}^* \rightarrow G_i$ the embedding of the diagonal group (generated by $(1;1;1)$). Denote $G_i/\im(\delta_i) = \tilde{G}_i$

For each component $R_i$ form the group $H_i = \mathbb{C}^*$.

There is a natural map $q: \prod H_i \rightarrow \prod G_j$ which maps each $H_i$ to all $G_j$'s such that $(ij)$ is an edge of $\Gamma$, to the component(s) of $G_j$ corresponding to the edge(s) $(ij)$.

\begin{lemma}[gluing lemma]\label{lem:glue}
The space of line bundles with $c_1 = 0$ on $R$ is canonically $\prod \tilde{G}_i / \im(q)$. The group law is also in accord with this description. We will refer to the elements of this group as \textbf{gluing data}.
\end{lemma}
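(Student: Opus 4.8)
The plan is to describe a line bundle with $c_1 = 0$ on $R$ by a \v{C}ech-style gluing datum with respect to the normalization, so I would first set up an open cover of $R$. Since $R$ has only triple points as singularities, a small analytic (or étale) neighborhood of each triple point $t_j$ is a copy of the coordinate cross in $\mathbb{C}^3$, and away from the triple points $R$ is a disjoint union of smooth curves. Take opens $V_j$ = small neighborhoods of the triple points and $W_i$ = the complement in $R_i$ (pushed to $R$) of the preimages of the triple points, fattened slightly so that the $W_i$ cover $R \setminus \{t_j\}$ and meet the $V_j$ in disjoint punctured-disk-like pieces indexed exactly by the edges of $\Gamma$. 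The first step in the proof is then: a line bundle $L$ on $R$ with $c_1 = 0$ on each component has $\nu_i^* L$ trivial on each $R_i \cong \mathbb{P}^1$, so choosing trivializing sections $\kappa_i$ of $\nu_i^* L$ (each well-defined up to $\mathbb{C}^*$, contributing the factor $H_i = \mathbb{C}^*$), and trivializing $L$ near each triple point $t_j$ (contributing a section of the $1$-dimensional fiber, whose ratios against the $\kappa_i$ along the three branches give an element of $G_j = (\mathbb{C}^*)^3$, well-defined up to the simultaneous rescaling of that single local trivialization, i.e.\ modulo $\delta_j$).

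The second step is to see that this recipe is well-defined and is a bijection. Forgetting to the cover, giving $L$ is the same as giving transition functions on the overlaps $W_i \cap V_j$; each such overlap is contractible (a disk or punctured disk on which everything is trivial), so the transition function is a single nonzero constant, namely the ratio of the two chosen trivializations — exactly one coordinate of the corresponding $G_j$ measured against the corresponding $H_i$. Thus the total space of all trivialization choices is $\prod_i H_i \times \prod_j G_j$, and $L$ together with all the chosen trivializations is this whole datum; the line bundle itself is recovered by gluing. Changing the choice of $\kappa_i$ by $\lambda_i \in H_i$ and the choice of local trivialization at $t_j$ by $\mu_j \in \mathbb{C}^*$ changes the transition constants, and one checks directly that the subgroup of changes coming from the $H_i$ is precisely $\mathrm{im}(q)$ (a rescaling of $\kappa_i$ multiplies every coordinate of every $G_j$ attached to an edge $(ij)$), while the $\mathbb{C}^*$-ambiguity at $t_j$ is exactly $\delta_j$. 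Hence the set of isomorphism classes of such $L$ is $\bigl(\prod_i H_i \times \prod_j G_j\bigr)$ modulo these changes, which regrouped is $\prod_j \tilde{G}_j / \mathrm{im}(q)$. Finally one notes that tensor product of line bundles corresponds to multiplication of gluing constants, so the identification is a group isomorphism, giving the last sentence.

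The main obstacle, and the place I would be most careful, is the bookkeeping of the two quotients and showing they combine correctly — in particular that the diagonal subgroups $\delta_j \subset G_j$ and the image $\mathrm{im}(q)$ of the component-rescalings intersect only as forced and together account for \emph{all} the redundancy, so that no isomorphism class is counted twice and none is missed. Concretely this amounts to an exactness check on the complex $\prod_i H_i \xrightarrow{q} \prod_j G_j \to \prod_j \tilde G_j \to 0$ combined with the fact that a global rescaling $\lambda_i \equiv \lambda$ together with $\mu_j \equiv \lambda$ acts trivially, i.e.\ understanding $\ker(q)$ and how it interacts with the $\delta_j$'s; this is where the hypothesis that $R$ is connected (implicit) and that $\Gamma$ is connected would enter if one wants to pin down $\ker q$ exactly, though for the bare statement of the lemma only the surjection onto $\prod_j \tilde G_j/\mathrm{im}(q)$ and injectivity of the induced map on iso-classes are needed. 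A secondary, purely technical point is justifying that $c_1 = 0$ on each component really does force $\nu_i^*L$ to be trivial (true since $\mathrm{Pic}(\mathbb{P}^1) = \mathbb{Z}$ via degree) and that higher \v{C}ech cohomology of the cover vanishes so that the gluing-cocycle description is complete — both are routine once the cover is chosen so that all overlaps are Stein/contractible.
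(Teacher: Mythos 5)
Your proposal is correct and follows essentially the same route as the paper: trivialize $\nu_i^*L$ on each rational component, record the comparison of the three sections at each triple point to get an element of $\tilde{G}_j$, and quotient by the component rescalings, which is exactly $\im(q)$. The paper simply reads off the three section values directly in the one-dimensional fiber $L_{t_j}$ (so the $\delta_j$-ambiguity is the choice of basis of that fiber) instead of introducing an explicit \v{C}ech cover, but this is a cosmetic difference, not a different argument.
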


\begin{proof}
Suppose $L$ is a line bundle over $R$, and pick $\gamma_1, ..., \gamma_k$ - sections of $L_i = \nu^*_i(L)$. Then, in every fiber of $L$ over a triple point $t_j$ we obtain values of three sections - which gives an element in $\tilde{G}_j$. The choice of $\gamma_i$'s was up to a constant, which gives the quotient by $q$.

Moreover, it is clear that any line bundle can be obtained in this way - given an element in $\prod G_j$ we can glue the fibers of $L_i$ together using this data in an obvious way, the group law check is also straightforward.
\end{proof}

\subsection{Explicit description of $T_X^1$}

\begin{lemma} \label{lem:tx1-explicit}
Suppose $X$ is a normal crossing surface without branch switching, $\nu: C \rightarrow \Sing(X)$ - normalization of some irreducible component of the singular locus, $t_1, ..., t_k \in C$ - preimages of the triple points, $B_1, B_2$ - branches of $X$ passing through $C$.

Then, $\nu^*(T_X^1) = N_{C; B_1} \otimes N_{C; B_2} ([t_1] + ... + [t_k])$.

Suppose $t \in X$ - a triple point, $C_1, C_2, C_3$ - branches of $\Sing(X)$ passing through $t$, $B_{ij}$ - branches of $X$ in $t$ passing through $C_i, C_j$.

Then, the fiber of $N_{C_1; B_{12}} \otimes N_{C_1; B_{13}} ([t])$ in $t$ is canonically identified with $T_{t; C_1} \otimes T_{t; C_2} \otimes T_{t;C_3}$, and this identification gives the gluing of $T_X^1$ between the components.

\end{lemma}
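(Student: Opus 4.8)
The plan is to prove both assertions of Lemma~\ref{lem:tx1-explicit} by a purely local computation in the log charts, using the explicit description of $T_X^1$ from Definition~\ref{def:tx1-log} and the identification with the normal-sequence description already established in Lemma~\ref{lem:tx1-equiv}. First I would treat the statement away from the triple points. Pick a log chart $U$ adapted to a point of $C$ lying in the generic part of $\Sing(X)$: there $X$ looks like $\{x_1 x_2 = 0\} \subset \mathbb{C}^n$ with $C = \{x_1 = x_2 = 0\}$, and $\alpha^* = (d(x_1 x_2))^*$ trivializes $T_X^1$ near this point. The key observation, taken from the proof of Lemma~\ref{lem:tx1-equiv}, is that the transition functions for $T_X^1$ between two such charts are $z_1 z_2$, where $z_i = u_i'/u_i$ records the change of the $i$-th structure function. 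But $z_1$ (restricted to $C$) is exactly the transition function for the conormal line bundle $N_{C;B_1}^\vee$ of $C$ inside the branch $B_1 = \{x_1 = 0\}$, read off from $u_1$ as a section of that conormal bundle --- or rather $z_1$ is the transition function for $N_{C;B_1}$ since $u_1$ restricted to the other branch plays the role of a normal coordinate; similarly $z_2$ for $N_{C;B_2}$. Hence on the smooth part of $C$ we get $\nu^*(T_X^1) \cong N_{C;B_1} \otimes N_{C;B_2}$, and it remains to analyze the discrepancy at the $t_i$.

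For the behavior at a triple point, work in the log chart where $X = \{x_1 x_2 x_3 = 0\} \subset \mathbb{C}^n$. Fix $C = C_1 = \{x_1 = x_2 = 0\}$; then the two branches through $C_1$ are $B_{12} = \{x_2 = 0\}$ and $B_{13} = \{x_3 = 0\}$ (in the notation matching the statement, with roles of the indices assigned by which branch of $\Sing(X)$ one sits on), and the trivializing section is $\alpha^* = (d(x_1 x_2 x_3))^*$. Comparing $\alpha^*$ with the section $(d(x_1 x_2))^* \otimes(\text{normal-coordinate section along }C_3)$ one sees a factor of $x_3$ appearing: concretely, near $C_1$ but away from $t$ one has $d(x_1 x_2 x_3) = x_3\, d(x_1 x_2) + x_1 x_2\, dx_3$, and the second term vanishes on $X$, so $\alpha^* = x_3^{-1}\cdot$ (the section coming from $N_{C_1;B_{12}}\otimes N_{C_1;B_{13}}$). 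The function $x_3$ restricted to $C_1$ has a simple zero at $t$, which is precisely the twist by $[t]$; summing over all preimages $t_1, \dots, t_k$ of triple points gives the divisor $[t_1] + \cdots + [t_k]$ and hence the first formula. The second assertion then comes from evaluating: the fiber of $N_{C_1;B_{12}}\otimes N_{C_1;B_{13}}([t])$ at $t$ is canonically $N_{C_1;B_{12}}|_t \otimes N_{C_1;B_{13}}|_t \otimes T_t C_1$ (the fiber of $\mathcal{O}([t])$ at $t$ being $T_t C_1$ canonically), and a short identification of conormal/normal directions in the coordinate cross $\{x_1 x_2 x_3 = 0\}$ rewrites this as $T_{t;C_1}\otimes T_{t;C_2}\otimes T_{t;C_3}$; symmetry in $1,2,3$ of this last expression is exactly what makes the gluing of $\nu^*T_X^1$ across the three components $C_1, C_2, C_3$ at $t$ consistent, which is the content of the final clause.

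I would organize the writeup as: (i) set up the adapted log charts and recall from Lemma~\ref{lem:tx1-equiv} that transition functions of $T_X^1$ are products $\prod z_i$; (ii) on the smooth part of $C$ identify $z_1, z_2$ with conormal transition data, proving the formula there; (iii) the triple-point computation with the $d(x_1 x_2 x_3)$ expansion, producing the $[t_i]$ twist; (iv) the fiberwise identification at $t$ and the symmetry check giving the gluing statement. The main obstacle I expect is bookkeeping the canonical identifications carefully enough that ``canonically identified'' in the statement is actually justified --- in particular being precise about which normal bundle is which (there is a genuine asymmetry: $N_{C;B}$ involves the normal direction to $C$ inside $B$, i.e. the direction transverse to $C$ but along $B$), about the sign/orientation conventions inherited from the orientation line $L_\mu$, and about the fact that $\mathcal{O}_C([t])|_t \cong T_t C$ canonically. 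None of these is deep, but getting the identification $N_{C_1;B_{12}}|_t\otimes N_{C_1;B_{13}}|_t\otimes T_tC_1 \cong T_{t;C_1}\otimes T_{t;C_2}\otimes T_{t;C_3}$ right, and checking it is invariant under cyclic relabeling of the three branches $C_1, C_2, C_3$, is where the real care is needed, since that invariance is precisely what the phrase ``this identification gives the gluing of $T_X^1$ between the components'' is asserting.
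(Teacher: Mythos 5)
Your proposal is correct and takes essentially the same route as the paper: a local computation in log charts identifying (the dual of) $T_X^1$ along $C$ with the product of conormal bundles twisted at the triple-point preimages — the paper packages this as the chart-independent section $\omega_1 = u_1\,du_2\otimes du_3$ of $N^*_{C_1;B_{12}}\otimes N^*_{C_1;B_{13}}(-[t])$, which is exactly your $x_3^{-1}$-comparison in dual form — followed by the fiberwise identification with $T_{t;C_1}\otimes T_{t;C_2}\otimes T_{t;C_3}$, whose symmetry in the three branches is what gives the gluing. One labeling slip to repair in the writeup: with $C_1=\{x_1=x_2=0\}$ the two branches of $X$ through $C_1$ are $\{x_1=0\}$ and $\{x_2=0\}$ (not $\{x_3=0\}$), which is in fact what your computation uses, since the twisting function is the coordinate $x_3$ along $C_1$.
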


\begin{proof}
It is easier to work with the dual bundle, $(T_X^1)^* = \Hom(T_X^1, \mathcal{O}_{\Sing(X)})$. For the log chart $(u_1, u_2, u_3)$ near the triple point $t$ we can form the object $\omega_1 = u_1 du_2 \otimes du_3$. Provided $C_1$ is a component on which $u_2 = u_3 = 0$, $\omega_1$ is a section of $N^*_{C_1; B_{12}} \otimes N^*_{C_1; B_{13}}([-t])$. It easy to check that equivalent log charts give the same $\omega_1$.

Now, in the same way log structure gives the sections $\omega_2$, $\omega_3$ on the branches $C_2$, $C_3$, which gives the gluing between fibers in $t$ - $N^*_{C_1; B_{12}} \simeq T_{C_2}^*$, $N^*_{C_1, B_{13}} \simeq T_{C_3}^*$, $\mathcal{O}_{C_1}(-[t]) \simeq T_{C_1}^*$. That identifies the fiber of $(T_X^1)^*$ in $t$ with $T^*_{C_1} \otimes T^*_{C_2} \otimes T^*_{C_3}$ canonically.
\end{proof}

\begin{corollary}\label{cor:tx1-explicit}
Given $X$ geometrically maximally degenerate, satisfying Kulikov's triple point condition, pick for every component $\nu_i: C^i \rightarrow \Sing(X)$ with branches $B^i_1$, $B^i_2$ the pair of conormal forms $\omega^i_1 \in N^*_{C^i, B^i_1}, \omega^j_2 \in N^*_{C^i, B^i_2}$ such that $\omega^i_1 \otimes \omega^i_2$ vanishes in all triple points on $C^i$. 

Then, gluing data is obtained as differential of $\omega^i_1 \otimes \omega^i_2$ along $C^i$.
\end{corollary}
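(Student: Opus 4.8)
\medskip
\noindent\textbf{Proof proposal.} The plan is to combine the explicit local description of $T_X^1$ in Lemma \ref{lem:tx1-explicit} with the gluing Lemma \ref{lem:glue}, working one component of $\Sing(X)$ at a time. First I would fix a component $\nu\colon C\to\Sing(X)$ with branches $B_1,B_2$ and with $t_1,\dots,t_k\in C$ the preimages of the triple points of $X$. By the first part of Lemma \ref{lem:tx1-explicit}, $\nu^*(T_X^1)=N_{C; B_1}\otimes N_{C; B_2}([t_1]+\dots+[t_k])$, so
\[
\bigl(\nu^*T_X^1\bigr)^*\;=\;N^*_{C; B_1}\otimes N^*_{C; B_2}\bigl(-[t_1]-\dots-[t_k]\bigr).
\]
Kulikov's triple point formula together with Lemma \ref{lem:Kul-c1} forces $c_1(N_{C; B_1})+c_1(N_{C; B_2})+k=0$, hence $\deg\bigl(N^*_{C; B_1}\otimes N^*_{C; B_2}\bigr)=k$ and $(\nu^*T_X^1)^*$ has degree $0$; since $C\cong\mathbb{P}^1$ it is trivial. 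A trivializing section of it, multiplied by the tautological section of $\mathcal{O}([t_1]+\dots+[t_k])$ with divisor $[t_1]+\dots+[t_k]$, is precisely a section of $N^*_{C; B_1}\otimes N^*_{C; B_2}$ vanishing — to first order, by the degree count — exactly at $t_1,\dots,t_k$; conversely every such section arises this way, is unique up to a global scalar, and depends only on the tensor product $\omega_1\otimes\omega_2$. These are exactly the forms in the statement.

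Next I would use the elementary remark that a section $s$ of a line bundle $\mathcal{L}$ on a smooth curve vanishing to first order at a point $t$ has a canonical differential $ds|_t\in\mathcal{L}|_t\otimes T^*_t$, namely the value at $t$ of the section of $\mathcal{L}(-t)$ that $s$ determines, under $\mathcal{O}(-t)|_t\simeq T^*_t$. Applied to $s=\omega_1\otimes\omega_2$ and $\mathcal{L}=N^*_{C; B_1}\otimes N^*_{C; B_2}$ at $t=t_l$, the differential $d(\omega_1\otimes\omega_2)|_{t_l}$ lands in $N^*_{C; B_1}|_{t_l}\otimes N^*_{C; B_2}|_{t_l}\otimes T^*_{t_l;C}$, which is $(\nu^*T_X^1)^*|_{t_l}$; and by the second part of Lemma \ref{lem:tx1-explicit} this fiber is canonically $T^*_{t_l;C_1}\otimes T^*_{t_l;C_2}\otimes T^*_{t_l;C_3}=(T_X^1)^*$ at the triple point $\nu(t_l)$, where $C_1=\nu(C)$ and $C_2,C_3$ are the other two branches of $\Sing(X)$ there. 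Tautologically, $d(\omega_1\otimes\omega_2)|_{t_l}$ is then the value at $t_l$ of the trivializing section of $(\nu^*T_X^1)^*$ produced in the first paragraph.

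Finally I would apply the gluing Lemma \ref{lem:glue} to $R=\Sing(X)$ and $L=T_X^1$, which has $c_1=0$ by Kulikov's condition: $L$ is encoded by the collection of values at the triple points of trivializing sections of the $\nu^*(L)$ over the components, modulo the per-component scalars $\im(q)$, with the three values over a fixed triple point lying in a common one-dimensional fiber. The log-structure identifications of Lemma \ref{lem:tx1-explicit} identify that common fiber over $\nu(t_l)$ with $T_{t_l;C_1}\otimes T_{t_l;C_2}\otimes T_{t_l;C_3}$ compatibly with the three components' contributions, so the abstract torsor $\prod\tilde{G}_j/\im(q)$ of the gluing lemma is the concrete one of Lemma \ref{lem:tx1-explicit}. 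Comparing the three paragraphs, the gluing data of $(T_X^1)^*$ — equivalently of $T_X^1$, inversion being an automorphism of the torus $\prod\tilde{G}_j/\im(q)$ — is read off precisely from the differentials $d(\omega_1\otimes\omega_2)$ along the components, which is the assertion.

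The step I expect to be the main obstacle is not any single computation but the bookkeeping of the canonical isomorphisms and of the various scalar/diagonal indeterminacies: one must verify that $d(\omega_1\otimes\omega_2)$ is genuinely canonical (insensitive to how the product is split into $\omega_1$ and $\omega_2$, and changing only by the same global constant as the product), and — more delicately — that the abstract torsor structure of Lemma \ref{lem:glue}, i.e. the quotients $G_j/\im(\delta_j)$ and the map $q$, is carried isomorphically onto the concrete gluing coming from the log structure in Lemma \ref{lem:tx1-explicit}, so that no orientation or rescaling ambiguity is swept under the rug. Once this compatibility is pinned down the corollary follows immediately.
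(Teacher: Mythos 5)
Your proposal is correct and follows exactly the route the paper intends: the corollary is stated as an immediate consequence of Lemma \ref{lem:tx1-explicit} (both the divisor formula for $\nu^*(T_X^1)$ and the canonical identification of its fiber at a triple point) combined with the degree count from Kulikov's condition and the gluing Lemma \ref{lem:glue}, which is precisely the chain of deductions you spell out. The paper leaves this deduction implicit, so your write-up is simply a more careful version of the same argument, including the correct observation that the first-order vanishing at the $t_l$ makes the differential $d(\omega^i_1\otimes\omega^i_2)|_{t_l}$ a well-defined element of the fiber $T^*_{t_l;C_1}\otimes T^*_{t_l;C_2}\otimes T^*_{t_l;C_3}$.
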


\subsection{Calculation in the example} Before the start of the computation, we must encourage the reader. The computation which will shortly follow looks very tedious on first sight, however most of the complexity comes from the fact that we need to somehow trivialize $1$-dimensional tangent spaces to the points $(p,q)_{(1,2,3)}$. There are different such trivializations, and different tensors look better in different trivializations. It leads to a lot of multiplicative factors depending on how these trivializations relate, and this dependence is contained in two parameters - the positions of flex points of the cubics in the projective coordinates given by points $p_1, p_2, p_3$. We will later promptly ignore such multiplicative factors and obtain a nice answer modulo them - and it will be enough to prove the desired result. 
So, on the promise of the final answer being nice modulo factors we can ignore, we proceed with calculation of the gluing data of the main construct:

We assume the following convention: while calculations will take place on $\mathbb{P}^2$, the calculation of all divisors will take place on $\hat{X}$ directly.

Denote as $\Omega_P, \Omega_Q$ the sections of $\omega_{\mathbb{P}^2}$ such that $\frac{1}{\Omega_P}$ vanishes in $P$ and similarly for the other one. They are chosen uniquely up to a constant - which we will fix in a specific way.

Denote as $\alpha_P$, $\alpha_Q$ Poincare residues of these forms - these are $1$-forms on $\hat{P}$ and $\hat{Q}$, with residues in $p_1, p_2$ and $q_1, q_2$ respectively.

We choose $\Omega_P$ and $\Omega_Q$ in such a way that the $\Res_{p_1}(\alpha_P) = \Res_{q_1}(\alpha_Q) = 1$.

Denote as $\tau_p$ the projective invariant coordinate on $\hat{P}$ such that \[\tau_P(p_1) = 0\]\begin{equation} \tau_P(p_2) = \infty\end{equation} \[\tau_P(\psi_P) = 1\] where $\psi_P$ is a flex point of $P$. Do the same for $Q$.

Denote \begin{equation}n_P = \tau_P(p_3), n_Q = \tau_Q (q_3)\end{equation}

Pick some arbitrary vector fields $v_P, v_Q$ defined only in terms of these coordinates. Specifically, we use
\begin{equation}v_P = (\tau_P-1)^2 \frac{\partial}{\partial \tau_P}\end{equation} and the same for $Q$. The only requirement we have is that these fields are nonzero in $(p,q)_{1,2,3}$ provided position is generic enough.

Pick also some arbitrary affine chart, generic enough, and denote its infinity as $H$, and its volume form as $\Omega$. Define affine equations of $P$ and $Q$ as follows:
\begin{equation}
    f_{P,Q} = \frac{\Omega}{\Omega_{P,Q}}
\end{equation}

We would also like to pick some functions which vanishes in our points of interest. We suggest following: \begin{equation}s = n_P^2\frac{\tau_P - n_P}{(\tau_P - 2n_P)^3}\end{equation}
This function is chosen in such a way that it has the same formula in the coordinate defined by $p_1, p_2, p_3$, independently on $n_P$.

Finally, we pick the sections of the conormal bundle to $P$ and $Q$:

\begin{equation}\label{eq:beta}
    \beta_P = i_{v_P} \Omega, \beta_Q = i_{v_Q} \Omega
\end{equation}

From now on, we identify the curve $\hat{Q}$ with $\hat{P}$ via the gluing map $\varphi$. Finally, we consider the following meromorphic section of $T_X^1$ (pulling back objects from $Q$ via the gluing map): \begin{equation}\eta = s \beta_P \otimes \beta_Q \end{equation}

\begin{proposition}
$\eta$ vanishes once in $p_1$, $p_2$, $p_3$ for $P, Q$ in generic position.
\end{proposition}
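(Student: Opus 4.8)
The plan is to track the divisor of $\eta = s\,\beta_P \otimes \beta_Q$ on $\hat P \cong \hat Q$ (identified via $\varphi$) and show the total degree of its zero part at $p_1,p_2,p_3$ is exactly one at each point, with no other zeros or poles on the singular locus. Recall from Lemma \ref{lem:tx1-explicit} and Corollary \ref{cor:tx1-explicit} that $\nu^*(T_X^1) = N_{C;B_1}\otimes N_{C;B_2}\big([t_1]+\dots+[t_k]\big)$, so a meromorphic section of $T_X^1$ built from conormal forms $\beta_P \in N^*_{P;B}{}^{-1}$-type data will have a prescribed pole along the triple points unless the scalar factor $s$ is chosen to cancel them — which is precisely why $s$ was introduced with zeros at $\tau_P = n_P$ (i.e.\ at $p_3$) and poles pushed to $\tau_P = 2n_P$, away from the points of interest. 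So the first step is to write $\beta_P$, $\beta_Q$ explicitly as sections of the relevant (co)normal bundles, compute $\mathrm{div}(\beta_P)$ and $\mathrm{div}(\beta_Q)$ on $\hat P$ in the coordinate $\tau_P$, and add $\mathrm{div}(s)$.

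First I would compute $\mathrm{div}(\beta_P)$. Since $\beta_P = i_{v_P}\Omega$ with $v_P = (\tau_P - 1)^2 \partial/\partial\tau_P$ and $\Omega$ a generic affine volume form, $\beta_P$ restricted to $P$ is a conormal section whose vanishing locus is controlled by: (a) the zeros/poles of the abstract meromorphic vector field $v_P$ on $\hat P \cong \mathbb{P}^1$, which has a double zero at the flex $\psi_P$ ($\tau_P = 1$) and a pole of order $2$ at $\tau_P = \infty = p_2$; and (b) the intersection of $P$ with the line at infinity $H$, contributing the degree-$3$ divisor $P\cap H$; together with the conormal twist coming from $N^*_{P}\big|_{\hat X} = \mathcal{O}_{\hat P}(P|_P)$, where $P|_P$ has degree $\deg N_{P} = -2$ after the blow-ups (this is exactly the self-intersection computation already done: $P^2 = -2$ on $\hat X$). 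Doing the bookkeeping, $\beta_P \otimes \beta_Q$ as a section of $N_P^* \otimes N_Q^* = (T_X^1)^*\big(-\sum[t_i]\big)$ acquires a definite divisor on $\hat P$; the factors $\Res_{p_1}\alpha_P = \Res_{q_1}\alpha_Q = 1$ and the normalization of $\Omega_P,\Omega_Q$ fix the constants, and the genuinely coordinate-dependent data — positions of the flexes $\psi_P,\psi_Q$ relative to $p_1,p_2,p_3$ — enter only through nonvanishing multiplicative scalars at $p_1,p_2,p_3$, which as promised in the paragraph before the proposition we are entitled to ignore since we only care about orders of vanishing.

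Then I would add $\mathrm{div}(s)$ with $s = n_P^2 (\tau_P - n_P)/(\tau_P - 2n_P)^3$: this contributes $+[p_3]$ (the simple zero at $\tau_P = n_P$), a pole of order $3$ at $\tau_P = 2n_P$ (which is a generic point, not on the singular locus, hence irrelevant to the statement about $T_X^1$ as a line bundle on $\Sing(X)$ — or, more precisely, we work with $\eta$ as a section of $T_X^1$ and only check the behaviour at $p_1,p_2,p_3$), and a zero/pole at $\tau_P = \infty = p_2$ of the order needed to cancel the pole of $v_P$ there. The point of the specific algebraic form of $s$ (same formula in the $p_1,p_2,p_3$-coordinate independently of $n_P$) is that the contributions at $p_1, p_2$ are rigid. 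Combining: at $p_1$ ($\tau_P = 0$), $\eta$ has a simple zero coming from the residue structure of $\alpha_P,\alpha_Q$ relative to $\beta_P,\beta_Q$; at $p_2$ ($\tau_P = \infty$) the pole of $v_P^{\otimes 2}$ is cancelled by $s$ and by the twist to leave a simple zero; at $p_3$ the simple zero is contributed by $s$. The main obstacle — and the only place real care is needed — is step two: correctly assembling the conormal twist $\mathcal{O}(P|_P)$ with the abstract divisor of $v_P$ and the affine data $P\cap H$, so that the degree count is consistent (it must be: $\deg(T_X^1) = 0$ by Kulikov's condition, already verified for the main construct, so the zeros at $p_1,p_2,p_3$ must be balanced by the poles at the generic points $\tau_P = 2n_P$ and wherever $\beta_P,\beta_Q$ vanish off the triple points, and one has to confirm nothing extra lands on the singular locus). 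Once the divisor is pinned down up to the harmless scalars at $p_1,p_2,p_3$, the proposition follows immediately by reading off the local orders.
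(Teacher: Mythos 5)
The statement is a purely local check at $p_1,p_2,p_3$, and your global divisor plan both misses that and rests on mechanisms that are not there. The order of vanishing of $\eta=s\,\beta_P\otimes\varphi^*\beta_Q$ at each $p_i$ is just $\ord_{p_i}(s)$ plus the orders of $\beta_P$ and $\varphi^*\beta_Q$, and the latter two are units near these points: $v_P$ and $v_Q$ are nonvanishing there by construction, the points lie over $p_N,q_N,n$, which avoid the line at infinity $H$ and the flexes for generic data, and none of $p_N,q_N,n$ is a blown-up point, so no exceptional twist can alter the local order. Your argument at $p_2$ invokes "the pole of $v_P^{\otimes 2}$ cancelled by $s$ and by the twist": but $v_P=(\tau_P-1)^2\,\partial/\partial\tau_P$ is a \emph{holomorphic} vector field on $\hat P\cong\mathbb{P}^1$ with divisor $2[\psi_P]$ and is nonzero at $\tau_P=\infty=p_2$ (the paper explicitly requires $v_P,v_Q\neq 0$ at the $p_i,q_i$); moreover the second tensor factor of $\eta$ at $p_2$ is $\beta_Q$ evaluated at $q_3=\varphi(p_2)$, built from $v_Q$, so no square of $v_P$ occurs, and no twist is present at $p_2$ since $p_N$ is not blown up. Likewise the claimed simple zero at $p_1$ "coming from the residue structure of $\alpha_P,\alpha_Q$" cannot arise: $\alpha_P,\alpha_Q$ do not enter the definition of $\eta$. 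The only possible source of vanishing at the three points is the factor $s$.

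Carrying out the actual local evaluation — which is all the paper's "direct check" amounts to — also exposes an issue your write-up papers over: with $s$ exactly as printed, $s=n_P^2(\tau_P-n_P)/(\tau_P-2n_P)^3$ satisfies $s(p_1)=1/8\neq 0$ and has a double zero at $\tau_P=\infty=p_2$, so the orders of $\eta$ at $p_1,p_2,p_3$ would come out $0,2,1$. The proposition is correct for the evidently intended choice of $s$ with simple zeros at all three points (e.g.\ $\sigma(\sigma-1)/(\sigma-2)^3$ in the coordinate $\sigma=\tau_P/n_P$), which is what the paper relies on later when it writes $D(s)_+=[p_1]+[p_2]+[p_3]$ and takes $\frac{\partial s}{\partial v_P}(p_i)$ as the leading coefficient of the gluing data. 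Because your proposal never evaluates $s$, $\beta_P$, $\varphi^*\beta_Q$ at the three points, it neither detects this nor establishes the claimed orders; and the global bookkeeping you single out as the main difficulty (conormal twist on $\hat X$, $P\cap H$, Kulikov degree balance) is irrelevant to this local statement.
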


\begin{proof}Direct check.\end{proof}

Now, denote \[\lambda_2 = \frac{v_P(p_1)}{\varphi^*(v_Q (q_2))}\]\begin{equation}
\lambda_3 = \frac{v_P(p_2)}{\varphi^*(v_Q (q_3))}   
\end{equation}\[\lambda_1 = \frac{v_P(p_3)}{\varphi^*(v_Q (q_1))}\]

\begin{proposition}
The gluing data for the section $\eta$ is as follows:

(along the branch $p_1$)
\begin{equation} \label{eq:main1}
     \frac{\partial s}{\partial v_P}(p_1) \times \langle \beta_P (p_1), v_P(p_2)\rangle \times \langle\beta_Q (q_2), \lambda_1 v_Q(q_1)\rangle = 
\end{equation}
\[   \lambda_1 \frac{\partial s}{\partial v_P}(p_1) \times  \Omega(v_P(p_1), v_P(p_2)) \times \Omega(v_Q(q_2), v_Q(q_1)) 
\]

(along the branch $p_2$)
\begin{equation}\label{eq:main2}
    \frac{\partial s}{\partial v_P}(p_2) \times \langle \beta_P (p_2), v_P(p_1)\rangle \times \langle\beta_Q (q_3), v_P(p_3)\rangle = 
\end{equation}
\[\frac{\partial s}{\partial v_P}(p_2) \times \Omega(v_P(p_2), v_P(p_1)) \times \Omega(v_Q(q_3), v_P(p_3))
\]

(along the branch $p_3$)
\begin{equation}\label{eq:main3}
     \frac{\partial s}{\partial v_P}(p_3) \times \langle \beta_P(p_3), \lambda_3 v_Q(q_3) \rangle \times \langle \beta_Q(q_1), \lambda_2 v_Q(q_2) \rangle =
\end{equation}

\[\lambda_2 \lambda_3 \frac{\partial s}{\partial v_P}(p_3) \times \Omega(v_P(p_3), v_Q(q_3)) \times \Omega(v_Q(q_1), v_Q(q_2))
\]

\end{proposition}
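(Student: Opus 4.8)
The plan is to extract the gluing datum straight from Corollary \ref{cor:tx1-explicit} and Lemma \ref{lem:tx1-explicit}; the only genuine work is the one–dimensional bookkeeping of tangent and conormal lines at the triple point. Recall that the normalization $\hat C$ of $\Sing(X)$ is identified with both $\hat P$ and $\hat Q$ through $\varphi$, that $p_1,p_2,p_3$ are exactly the three preimages of the triple point $t$, and that $\nu^{*}T_X^1 = N_{C;B_1}\otimes N_{C;B_2}([p_1]+[p_2]+[p_3])$, with $B_1,B_2$ the two branches of $X$ along $\Sing(X)$ (the $P$-branch and the $Q$-branch); by the Proposition just above, $\eta = s\,\beta_P\otimes\beta_Q$ is a meromorphic section of this bundle vanishing simply at $p_1,p_2,p_3$. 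Hence by Corollary \ref{cor:tx1-explicit} the gluing datum along the branch through $p_i$ is the leading Taylor coefficient of $\eta$ at $p_i$; since only $s$ vanishes there, this coefficient, taken against the prescribed trivialization $v_P$ of $T_{p_i}\hat C$, equals $\tfrac{\partial s}{\partial v_P}(p_i)\cdot\beta_P(p_i)\otimes\beta_Q(\varphi(p_i))$. That is the left-hand side of each displayed equality; it remains to evaluate the two conormal covectors through Lemma \ref{lem:tx1-explicit} and to rewrite everything via $\beta_P = i_{v_P}\Omega$, $\beta_Q = i_{v_Q}\Omega$.

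First I would fix the local picture at $t$. The cyclic gluing $\varphi(p_1)=q_2$, $\varphi(p_2)=q_3$, $\varphi(p_3)=q_1$, together with the fact that the node of $P$, the node of $Q$ and the point $n=P\cap Q$ all lie over $t$, forces the following: writing $X$ near $t$ as a union of three coordinate planes with singular axes $C_1,C_2,C_3$, the axis $C_3$ is the common image of $p_1$ and $q_2$, $C_2$ of $p_2$ and $q_3$, $C_1$ of $p_3$ and $q_1$; near the node of $P$ the $P$-branch is the sheet carrying $C_3$ and $C_2$, near the node of $Q$ the $Q$-branch is the sheet carrying $C_3$ and $C_1$, and near $n$ one sheet of $\hat X$ carries $C_1$ (its $P$-part) together with $C_2$ (its $Q$-part). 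Feeding this into Lemma \ref{lem:tx1-explicit}: $\beta_P(p_i)$ lands in the cotangent line of the axis the $P$-branch carries alongside the one through $p_i$, and similarly for $\beta_Q(\varphi(p_i))$; each such tangent line is canonically realized, on $\hat X$, at one of the points $p_N$, $q_N$, $n$ of $\mathbb{P}^2$, with natural generator $v_P(p_{i'})$ or $v_Q(q_{j'})$ — namely a $v_P$-value when the axis is the second branch of $P$ at its node, a $v_Q$-value when it is the second branch of $Q$ at its node or the $Q$-part at $n$, and $v_P(p_3)$ when it is the $P$-part at $n$. Since $\langle i_v\Omega,w\rangle=\Omega(v,w)$ (vectors at distinct points being compared by translation in the fixed affine chart, those at a node or at $n$ being genuinely at one point of $\mathbb{P}^2$), each pairing on the left becomes a value of $\Omega$ on a pair among $v_P(p_\bullet),v_Q(q_\bullet)$.

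To present the three data uniformly one must measure all three cotangent lines of $C_1,C_2,C_3$ against the single system $v_P(p_1),v_P(p_2),v_P(p_3)$; whenever the natural generator produced above is a $v_Q(q_j)$ rather than the matching $v_P(p_i)$, one multiplies by the ratio $\lambda_j = v_P(p_i)/\varphi^{*}v_Q(q_j)$. Exactly one such mismatch occurs along the branch $p_1$ — the cotangent line of $C_1$, seen from the node of $Q$, whence the single factor $\lambda_1$ in \eqref{eq:main1}; none along the branch $p_2$, where the cotangent line of $C_1$ is already seen at $n$ as $v_P(p_3)$, so \eqref{eq:main2} is $\lambda$-free; and two along the branch $p_3$ — the cotangent lines of $C_2$ (seen at $n$ as $v_Q(q_3)$) and of $C_3$ (seen at the node of $Q$ as $v_Q(q_2)$), whence $\lambda_2\lambda_3$ in \eqref{eq:main3}. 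Collecting, each left-hand side becomes the stated right-hand side.

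The step I expect to be the real obstacle is the middle one: tracking, for each $i$, which of the two remaining axes at $t$ is seen by $\beta_P(p_i)$ and which by $\beta_Q(\varphi(p_i))$, and at which of $p_N,q_N,n$ each relevant cotangent line is canonically realized — hence whether its natural generator is a $v_P$- or a $v_Q$-value. This is elementary but unforgiving; the coordinate-cross figure of the Example section, with the three sheets labelled by the three nodal events and the three axes by $C_1,C_2,C_3$, is what makes the assignment mechanical. Everything else — that the leading coefficient of $s\,\beta_P\otimes\beta_Q$ at $p_i$ is $\tfrac{\partial s}{\partial v_P}(p_i)\,\beta_P(p_i)\otimes\beta_Q(\varphi(p_i))$, and the identity $\langle i_v\Omega,w\rangle=\Omega(v,w)$ — is immediate.
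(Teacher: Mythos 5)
Your proposal is correct and follows essentially the same route as the paper's own (very terse) proof: trivialize the three axis tangent lines at the triple point by $v_P(p_1), v_P(p_2), v_P(p_3)$, use the relations $\lambda_2 v_Q(q_2)=v_P(p_1)$, $\lambda_3 v_Q(q_3)=v_P(p_2)$, $\lambda_1 v_Q(q_1)=v_P(p_3)$ to account for the mismatched generators, read off the leading coefficient of $\eta=s\,\beta_P\otimes\beta_Q$ at each $p_i$, and convert the pairings via $\beta=i_v\Omega$. Your expanded bookkeeping of which axis each conormal covector sees, and hence where exactly the factors $\lambda_1$ and $\lambda_2\lambda_3$ arise, matches the stated formulas.
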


\begin{proof}
We use the vector field $v_P$ to trivialize the tangent spaces $T_{p_1}, T_{p_2}, T_{p_3}$ (remark that $\lambda_{i+1} v_Q(q_{i+1})$ is the same vector as $v_P(p_i)$). Then, we just calculate the value of the section in this trivialization. Finally, we use the definition of $\beta$'s, see eq.\ref{eq:beta}.
\end{proof}

\begin{lemma} \label{lem:n_p-dependance}
$\lambda$'s and $\frac{\partial s}{\partial v_P}(p_i)$'s only depend on the values of $n_P$ and $n_Q$.
\end{lemma}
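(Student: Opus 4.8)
The plan is to treat Lemma~\ref{lem:n_p-dependance} as a bookkeeping statement: once every object entering the definitions of $\lambda_1,\lambda_2,\lambda_3$ and of $\partial s/\partial v_P$ is rewritten in the projective coordinates $\tau_P$ on $\hat P$ and $\tau_Q$ on $\hat Q$ introduced above, the only information about the construct that survives is the position of $p_3$ and of $q_3$, i.e.\ the numbers $n_P$ and $n_Q$. The first step is to write the gluing map $\varphi\colon\hat P\to\hat Q$ in these coordinates. Since $\tau_P$ is pinned down by $\tau_P(p_1)=0$, $\tau_P(p_2)=\infty$, $\tau_P(\psi_P)=1$, and $\tau_Q$ likewise, the three conditions of eq.\ref{eq:phi-def} become the point correspondences $0\mapsto\infty$, $\infty\mapsto n_Q$, $n_P\mapsto 0$; a fractional linear transformation is determined by three such pairs, so $\varphi$, read in these coordinates, is one fixed rational function of $\tau_P$ with coefficients rational in $n_P,n_Q$ (one finds $\tau_Q = n_Q(\tau_P-n_P)/\tau_P$, though the explicit formula is not needed), and in particular $d\tau_Q/d\tau_P$ is rational in $\tau_P$ with coefficients depending only on $n_P,n_Q$.

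Next I would handle the $\lambda_i$. The vector fields $v_P=(\tau_P-1)^2\,\partial/\partial\tau_P$ and $v_Q=(\tau_Q-1)^2\,\partial/\partial\tau_Q$ are, by construction, global rational vector fields on $\mathbb{P}^1$ written purely in the respective coordinates, with no dependence on the construct; hence $\varphi^{*}v_Q$, transported to the $\tau_P$-coordinate by the chain rule applied to the map of the first step, is again a rational vector field on $\hat P$ whose coefficient function depends only on $n_P,n_Q$. By definition each $\lambda_i$ is the ratio of the coefficient of $v_P$ to the coefficient of $\varphi^{*}v_Q$ evaluated at one of the points $p_1,p_2,p_3$, i.e.\ at $\tau_P\in\{0,\infty,n_P\}$, with the index pairing prescribed by eq.\ref{eq:phi-def}; so each $\lambda_i$ is a function of $n_P,n_Q$ alone, and for $P,Q$ in generic position it is finite and nonzero (as already assumed), so the quotient is a bona fide function of $(n_P,n_Q)$.

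The term $\partial s/\partial v_P(p_i)=v_P(s)(p_i)$ is even more immediate: by definition $s=n_P^{2}(\tau_P-n_P)/(\tau_P-2n_P)^{3}$ is rational in $\tau_P$ with coefficients depending only on $n_P$, and $v_P$ is a rational vector field in $\tau_P$ with constant coefficients, so $v_P(s)$ is rational in $\tau_P$ with coefficients depending only on $n_P$; evaluating at $\tau_P\in\{0,\infty,n_P\}$ gives the claim.

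The only real care the argument needs --- and the closest thing to an obstacle --- is the point $p_2$, which sits at $\tau_P=\infty$: there one passes to the chart $w=1/\tau_P$, in which $v_P$, $v_Q$, $\varphi$, and $s$ all remain rational with coefficients in $n_P$ (respectively $n_P,n_Q$), so the evaluations at $p_2$ are legitimate and the conclusion is unchanged. Keeping the index pairing of eq.\ref{eq:phi-def} straight while assembling the three $\lambda$'s is a second, purely clerical, point of attention; beyond these two bookkeeping matters there is no analytic difficulty in the lemma.
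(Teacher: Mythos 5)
Your proof is correct and is essentially the paper's own argument: the paper also reduces everything to a projective coordinate pinned by the marked points (it uses $n_P^{-1}\tau_P$, putting $p_1,p_2,p_3$ at $0,\infty,1$ so that the flex carries the $n_P$-dependence, whereas you keep $\tau_P$ and push the dependence into the explicit formula for $\varphi$ — an equivalent normalization). Your version is just a more detailed write-up, including the explicit Möbius formula and the chart at $\infty$, of the same bookkeeping.
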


\begin{proof}
For $\frac{\partial s}{\partial v_P}(p_i)$ it is evident from the definitions. For $\lambda$'s, the reason is following - pick the coordinate $n_P^{-1} \tau_P$, i.e., such that $p_3 = 1$, $p_1 = 0$ $p_2 = \infty$. The positions of flex points of $P$ and $Q$ are functions of $n_P$ and $n_Q$, so the fields $v_P$ and $v_Q$ in this coordinate depend only on the aforementioned entities.
\end{proof}

Now, this gluing data is by no means final, because the section $\eta$ was actually only rational - it had zeroes and poles away from the set $p_1, p_2, p_3$. Now we need to find a function $r$ such that $r\eta$ has all these zeroes and poles cancelled - and multiply the gluing data obtained by $(r(p_1), r(p_2), r(p_3))$.

Split the divisor of the function $s$ into positive and negative parts:
\begin{equation}D(s) = D(s)_+ + D(s)_-\end{equation}
\[D(s)_+ = [p_1] + [p_2] + [p_3]\]
Perform the calculation of the divisor $D(\eta) - D(s)_+$:

\begin{equation}
    D(\eta) - D(s)_+ = D(s)_- + D(\beta_P) + D(\varphi^*\beta_Q)
\end{equation}

\begin{equation}
    D(\beta_P) = D(v_P) + D(\Omega|_P) = 
\end{equation}
\[D(v_P) - 3[H|_P] + ([b] + [Q|_P] - [p_3])\]

Here, in brackets we put part of the divisor which occurs due to the change of $\Omega$ under blow-up. Similarly, for $Q$:

\begin{equation}
    D(\beta_Q) = D(v_Q) + D(\Omega|_Q)) =
\end{equation}
\[D(v_Q) - 3[H|_Q] + ([P|_Q] - [q_3])\]

Denote \begin{equation}s_b =\frac{\tau_P - 1}{\tau_P - \tau_P(b)}\end{equation}

\begin{lemma} \label{lem:eta'-independent}
The divisor of the section \[\eta' = (f^{-1}_Q)|_P \cdot \varphi^*( (f^{-1}_P)|_Q) \cdot s_b \cdot \eta\]
is dependent only on $n_P, n_Q$
\end{lemma}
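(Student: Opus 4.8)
The plan is to compute the divisor $D(\eta')$ explicitly and observe that every contribution to it is controlled purely by $n_P$ and $n_Q$. The starting point is the decomposition already recorded above: $D(\eta) - D(s)_+ = D(s)_- + D(\beta_P) + D(\varphi^*\beta_Q)$, together with the formulas for $D(\beta_P)$ and $D(\beta_Q)$ in terms of $D(v_P)$, $D(v_Q)$, the hyperplane classes $[H|_P]$, $[H|_Q]$, the intersection divisors $[Q|_P]$, $[P|_Q]$, and the blow-up correction terms $[b]$, $-[p_3]$, $-[q_3]$. The section $\eta'$ is obtained from $\eta$ by multiplying by $(f_Q^{-1})|_P$, by $\varphi^*((f_P^{-1})|_Q)$, and by $s_b$, so I would add the divisors of these three factors to $D(\eta)$ and simplify.

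First I would compute $D((f_Q^{-1})|_P)$. Since $f_Q = \Omega/\Omega_Q$, its divisor on $\mathbb{P}^2$ is $[Q] - 3[H]$ (the cubic minus three times the line at infinity), so restricting to $P$ and taking the reciprocal gives $D((f_Q^{-1})|_P) = 3[H|_P] - [Q|_P]$; likewise $D(\varphi^*((f_P^{-1})|_Q)) = \varphi^*(3[H|_Q] - [P|_Q])$. These are precisely designed to cancel the $-3[H|_P]$, $[Q|_P]$ terms in $D(\beta_P)$ and the $-3[H|_Q]$, $[P|_Q]$ terms in $D(\beta_Q)$ (pulled back via $\varphi$). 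After this cancellation the surviving pieces of $D(\eta') - D(s)_+$ are $D(s)_-$, $D(v_P)$, $\varphi^*D(v_Q)$, the blow-up corrections $[b] - [p_3]$ and $\varphi^*(-[q_3]) = -[p_2]$, and the divisor $D(s_b) = [\psi_P] - [b]$ coming from $s_b = (\tau_P-1)/(\tau_P - \tau_P(b))$. The two occurrences of $[b]$ cancel, which is the whole point of introducing the factor $s_b$: the arbitrary auxiliary point $b$ disappears from the divisor.

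What remains is to check that each surviving term depends only on $n_P$ and $n_Q$ once everything is expressed in the projective coordinate $n_P^{-1}\tau_P$ on $\hat P$ (in which $p_1 = 0$, $p_2 = \infty$, $p_3 = 1$) and the analogous coordinate on $\hat Q$, transported to $\hat P$ via $\varphi$. The divisor $D(s)_-$ is read off directly from the formula $s = n_P^2(\tau_P - n_P)/(\tau_P - 2n_P)^3$, so it is a function of $n_P$ alone; $D(v_P)$ is the divisor of $(\tau_P - 1)^2 \partial/\partial\tau_P$, and although $v_P$ was defined using the flex point $\psi_P$ (which sits at $\tau_P = 1$), the position of $\psi_P$ in the coordinate $n_P^{-1}\tau_P$ is a function of $n_P$, as noted in the proof of Lemma \ref{lem:n_p-dependance}; similarly $\varphi^*D(v_Q)$ and $[\psi_P]$ depend only on $n_Q$ and $n_P$ respectively. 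The points $p_2, p_3$ are at fixed coordinates $\infty, 1$. Hence every term of $D(\eta')$ is a divisor on $\hat P$ whose support points, in the normalized coordinate, are functions of $(n_P, n_Q)$ only, which is the assertion.

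The main obstacle I anticipate is bookkeeping rather than conceptual: one must be careful about which divisors are computed on $\mathbb{P}^2$ versus on $\hat X$ (the excerpt fixes the convention that divisors are computed on $\hat X$), about the blow-up correction terms encoded in the parenthetical pieces of $D(\beta_P)$ and $D(\beta_Q)$, and about pulling divisors on $\hat Q$ back correctly via $\varphi$ using the identification $\varphi(p_1) = q_2$, $\varphi(p_2) = q_3$, $\varphi(p_3) = q_1$ from \eqref{eq:phi-def} — in particular $\varphi^*[q_3] = [p_2]$ and $\varphi^*[q_1] = [p_3]$. Once the cancellations of $[H|_P]$, $[H|_Q]$, $[Q|_P]$, $[P|_Q]$ and of the two copies of $[b]$ are verified, the dependence statement is immediate from the explicit coordinate formulas.
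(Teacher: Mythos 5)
Your proposal is correct and follows essentially the same route as the paper: assemble $D(\eta')$ from the previously computed pieces, observe that the factors $(f_Q^{-1})|_P$, $\varphi^*((f_P^{-1})|_Q)$ and $s_b$ cancel the $[H]$-terms, the intersection divisors and the point $[b]$, and then note that the surviving divisor $D(s)+D(v_P)+\varphi^*D(v_Q)-[p_2]-[p_3]+[\psi_P]$ is determined by $n_P,n_Q$ in the normalized coordinate. You in fact spell out the cancellations more explicitly than the paper, which only states the resulting formula.
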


\begin{proof}
From the calculations above this divisor is \[D(\eta') = D(v_P) + \varphi^{-1}(D(v_Q)) - [p_3] - [p_2] + D(s) + [\psi]\]

$D(v_P)$ is immobile, $\varphi^{-1}(D(v_Q))$ depends only on relative position of flex points, i.e. on $n_P$ and $n_Q$, $D(s)$ is immobile.
\end{proof}

Pick a function $g = \frac{\tau_P - n_P}{(\tau_P - 1)^2}$. It is dependent only on $n_P$ and has a divisor \begin{equation}D(g) = [p_2] + [p_3] - 2[\psi_P]\end{equation}

\begin{corollary} \label{cor:geta'}
The section $g \eta'$ has a divisor dependent only on $n_P$, and vanishes exactly once in $p_{1,2,3}$
\end{corollary}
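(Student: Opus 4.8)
The plan is to combine the two preceding results, Lemma \ref{lem:eta'-independent} and the explicit computation of $D(g)$, and then check the claimed vanishing at $p_1,p_2,p_3$ by a direct divisor count. First I would recall that $\eta'$ was defined so that, by Lemma \ref{lem:eta'-independent}, its divisor equals $D(v_P) + \varphi^{-1}(D(v_Q)) - [p_3] - [p_2] + D(s) + [\psi_P]$, where $D(v_P)$ and $D(s)$ are immobile (determined by the fixed choices of $v_P$ and $s$ in the projective coordinate given by $p_1,p_2,p_3$) and $\varphi^{-1}(D(v_Q))$ depends only on the relative positions of the flex points $\psi_P,\psi_Q$, hence only on $n_P$ and $n_Q$. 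Multiplying by $g = \frac{\tau_P - n_P}{(\tau_P-1)^2}$, whose divisor $[p_2]+[p_3]-2[\psi_P]$ depends only on $n_P$, gives $D(g\eta') = D(v_P) + \varphi^{-1}(D(v_Q)) + D(s) - [\psi_P]$, which visibly depends only on $n_P$ and $n_Q$; this is the first assertion.

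For the second assertion I would examine the coefficient of $g\eta'$ at each $p_i$ separately. The point of the bookkeeping is that the factors $-[p_3]-[p_2]$ appearing in $D(\eta')$ are exactly cancelled by the $[p_2]+[p_3]$ coming from $D(g)$, so $p_2$ and $p_3$ pick up no contribution from these correction terms. It then remains to see that the surviving divisor $D(v_P) + \varphi^{-1}(D(v_Q)) + D(s) - [\psi_P]$ has coefficient exactly $1$ at each of $p_1,p_2,p_3$: the term $D(s)$ contributes $[p_1]+[p_2]+[p_3]$ on its positive part (that was the proposition ``$\eta$ vanishes once in $p_1,p_2,p_3$''), while $D(v_P)$, $\varphi^{-1}(D(v_Q))$ and $[\psi_P]$ are supported away from $p_1,p_2,p_3$ for $P,Q$ in generic position — indeed $v_P = (\tau_P-1)^2\partial/\partial\tau_P$ vanishes only at $\psi_P$ (i.e. $\tau_P = 1$), $v_Q$ only at $\psi_Q$, and by genericity the flex points avoid $p_1,p_2,p_3$ and their $\varphi$-images avoid them too. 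Hence the total multiplicity at each $p_i$ is precisely the $1$ coming from $D(s)_+$, which is what is claimed.

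The main obstacle — really the only subtle point — is verifying that nothing else lands on $p_1,p_2,p_3$: one must make sure that the ``genericity'' already invoked in the earlier propositions (that $v_P,v_Q$ are nonzero at $(p,q)_{1,2,3}$ and that $\eta$ vanishes exactly once there) is the same genericity that controls $\varphi^{-1}(D(v_Q))$ and $[\psi_P]$, so that the open condition can be imposed once and for all. Given that, the corollary is immediate: it is just the observation that $D(g\eta') = D(\eta') + D(g)$ and that the $p_2,p_3$-contributions of $D(g)$ exactly offset the $-[p_2]-[p_3]$ in $D(\eta')$, leaving the clean positive divisor whose $p_i$-coefficients are read off from $D(s)_+$. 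I would present this as a one-paragraph argument citing Lemma \ref{lem:eta'-independent}, the formula for $D(g)$, and the genericity already in force, with the divisor arithmetic spelled out in a single displayed line.
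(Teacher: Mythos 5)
Your argument is correct and is exactly the one the paper leaves implicit: the corollary is immediate from adding $D(g)=[p_2]+[p_3]-2[\psi_P]$ to the divisor formula in Lemma \ref{lem:eta'-independent}, cancelling the $-[p_2]-[p_3]$ terms, and observing that the remaining summands $D(v_P)=2[\psi_P]$, $\varphi^{-1}(D(v_Q))$, $D(s)_-$ and $-[\psi_P]$ are supported away from $p_1,p_2,p_3$ in generic position, leaving only $D(s)_+=[p_1]+[p_2]+[p_3]$. One small correction: the identity $D(s)_+=[p_1]+[p_2]+[p_3]$ is asserted directly in the paper where $D(s)$ is split into its positive and negative parts, not derived from the earlier proposition that $\eta$ vanishes once at each $p_i$ (that proposition concerns $\eta=s\,\beta_P\otimes\beta_Q$, whose vanishing also involves $\beta_P,\beta_Q$), but this does not affect the validity of your proof.
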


Now, finally, we can pick a function $h$ which will cancel this additional part - it will depend only on $n_P, n_Q$ and be defined up to a constant. In what follows, we will not care about the dependance of the gluing data on $n_P$ and $n_Q$, so there is no point in finding this function precisely.

To finalize the answer, we would like to calculate 
\[g \cdot \frac{1}{f_Q|_{P}} \cdot \varphi^*( \frac{1}{f_P|_{Q}}) (p_1)
\]
\begin{equation}
g \cdot \frac{1}{f_Q|_{P}} \cdot \varphi^*( \frac{1}{f_P|_{Q}}) (p_2)
\end{equation}
\[g \cdot \frac{1}{f_Q|_{P}} \cdot \varphi^*( \frac{1}{f_P|_{Q}}) (p_3)
\]

Let us proceed. From now on, we will use the sign ''$\sim$'' to say that two values are related by something dependent only on $n_P$ and $n_Q$.

\begin{equation} \label{eq:ans1} g \cdot \frac{1}{f_Q|_{P}} \cdot \varphi^*( \frac{1}{f_P|_{Q}}) (p_1) = g(p_1) \cdot \frac{1}{f_Q(p_N) f_P(q_N)} \sim \frac{1}{f_Q(p_N) f_P(q_N)}
\end{equation}

It is harder for two other ones:

\begin{equation}\label{eq:ans2}
g \cdot \frac{1}{f_Q|_{P}} \cdot \varphi^*( \frac{1}{f_P|_{Q}}) (p_2) \sim \frac{\alpha_P}{i_{v_Q}\Omega}(n)\cdot \frac{1}{f_Q(p_N)}
\end{equation}

the reason for this is the fact that $\frac{g}{\varphi^{*}(f_P|_{Q})}$ in the point $p_2$ satisfies \[\frac{g}{\varphi^{*}(f_P|_{Q})} \sim 1/\frac{\partial}{\partial v_Q} f_P|_{Q}\] in the point $q_3$, due to $dg \sim v_Q (q_3)$. This chain continues:
\[1/\frac{\partial}{\partial v_Q} f_P|_{Q} = \frac{\alpha_P}{i_{v_Q}\Omega}\]

Similarly,

\begin{equation}\label{eq:ans3}
g \cdot \frac{1}{f_Q|_{P}} \cdot \varphi^*( \frac{1}{f_P|_{Q}}) (p_3) \sim \frac{\alpha_Q}{i_{v_P}\Omega} (n) \cdot \frac{1}{f_P(q_N)}
\end{equation}

Now, to cancel everything and put it into place, let use also use the following obvious formulas:

\begin{equation} \label{eq:subst-invariant-volumes}
    f_Q(p_N) = \frac{\Omega(v_P(p_1), v_P(p_2))}{\Omega_Q (v_P(p_1), v_P(p_2))}
\end{equation}
\[    f_P(q_N) = \frac{\Omega(v_Q(q_1), v_Q(q_2))}{\Omega_P (v_Q(q_1), v_Q(q_2))}
\]

Using equations \ref{eq:main1}, \ref{eq:main2}, \ref{eq:main3}, taking into account corollary \ref{cor:geta'} and equations \ref{eq:ans1}, \ref{eq:ans2}, \ref{eq:ans3}, and using equation \ref{eq:subst-invariant-volumes} we obtain

\begin{theorem} The gluing data of $T_X^1$ is, up to some multiples dependent only on $n_P, n_Q$:

For $p_1$:

\begin{equation}s_b(p_1) \Omega_P(v_P(p_1), v_P(p_2)) \Omega_Q(v_Q(q_1), v_Q(q_2))\end{equation}

For $p_2$:

\[
s_b(p_2)\Omega_Q(v_P(p_1), v_P(p_2)) \Omega(v_Q(q_3), v_P(p_3))\frac{\alpha_P}{i_{v_Q}\Omega}(n) =
\]
\[= s_b(p_2) \Omega_Q(v_P(p_1), v_P(p_2)) (\alpha_P,v_P)(p_3)\]

And the quantity $(\alpha_P, v_P) (p_3)$ is actually $\sim 1$, so

\begin{equation}\sim s_b(p_2) \Omega_Q(v_P(p_1), v_P(p_2))\end{equation}

Similarly, for $p_3$:

\begin{equation}
    s_b(p_3)\Omega_Q(v_Q(q_1), v_Q(q_2))
\end{equation}
\end{theorem}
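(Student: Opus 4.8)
The plan is to assemble the statement from the pieces already prepared rather than to produce anything genuinely new. The three inputs are: (i) the raw local gluing data of the rational section $\eta = s\,\beta_P\otimes\beta_Q$ along the three branches $p_1,p_2,p_3$, namely the right-hand sides of \ref{eq:main1}, \ref{eq:main2}, \ref{eq:main3}; (ii) the fact, from Lemma \ref{lem:eta'-independent} and Corollary \ref{cor:geta'}, that
\[ g\eta' \;=\; g\cdot (f_Q|_P)^{-1}\cdot \varphi^{*}\!\bigl((f_P|_Q)^{-1}\bigr)\cdot s_b\cdot \eta \]
is a meromorphic section of $T_X^1$ whose divisor is $[p_1]+[p_2]+[p_3]$ plus a divisor that only moves with $n_P,n_Q$; and (iii) the pointwise evaluations \ref{eq:ans1}, \ref{eq:ans2}, \ref{eq:ans3} of $g\cdot (f_Q|_P)^{-1}\cdot \varphi^{*}((f_P|_Q)^{-1})$ at $p_1,p_2,p_3$. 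By Corollary \ref{cor:tx1-explicit} the gluing data of $T_X^1$ can be read off any section whose divisor is supported over the triple points; multiplying $g\eta'$ by an as-yet-undetermined, purely $(n_P,n_Q)$-dependent function $h$ produces such a section, and since only the answer up to $\sim$ is wanted there is no need to find $h$. Hence the required gluing data will be the gluing data of $\eta$, multiplied branch by branch by the value of $s_b\cdot g\cdot (f_Q|_P)^{-1}\cdot \varphi^{*}((f_P|_Q)^{-1})$ at $p_1,p_2,p_3$ respectively.

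Next I would carry out the computation one branch at a time. For $p_1$: multiply the right-hand side of \ref{eq:main1} by $s_b(p_1)$ and by \ref{eq:ans1}, and substitute the volume-form identities \ref{eq:subst-invariant-volumes}; the factors $1/f_Q(p_N)$ and $1/f_P(q_N)$ become ratios of $\Omega_Q$, resp. $\Omega_P$, over the generic form $\Omega$, the generic $\Omega$-pairings cancel, and after discarding $\lambda_1$ and $\frac{\partial s}{\partial v_P}(p_1)$ — functions of $n_P,n_Q$ only by Lemma \ref{lem:n_p-dependance} — one is left with $s_b(p_1)$ times a product of an $\Omega_P$-pairing and an $\Omega_Q$-pairing of the relevant $v_P$- and $v_Q$-values, which is the asserted $p_1$ formula. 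For $p_2$: multiply \ref{eq:main2} by $s_b(p_2)$ and \ref{eq:ans2}; the $1/f_Q(p_N)$ again produces an $\Omega_Q(v_P(p_1),v_P(p_2))$ factor via \ref{eq:subst-invariant-volumes}, while the remaining $\Omega(v_Q(q_3),v_P(p_3))\cdot \frac{\alpha_P}{i_{v_Q}\Omega}(n)$ is exactly the pairing $(\alpha_P,v_P)(p_3)$. In the invariant coordinate $\tau_P$ (so that $p_1,p_2$ go to $0,\infty$ and $\Res_{p_1}\alpha_P=1$) one has $\alpha_P=d\tau_P/\tau_P$ and $v_P=(\tau_P-1)^2\partial_{\tau_P}$, hence $(\alpha_P,v_P)(p_3)=(n_P-1)^2/n_P\sim 1$, and the $p_2$ answer collapses to $\sim s_b(p_2)\,\Omega_Q(v_P(p_1),v_P(p_2))$. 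The $p_3$ branch is identical with the roles of $P$ and $Q$ interchanged, using \ref{eq:main3} and \ref{eq:ans3}. Collecting the three results gives the theorem.

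The only genuinely new computation is the identification $(\alpha_P,v_P)(p_3)\sim 1$, which is the one-line coordinate check just indicated; everything else is forced once \ref{eq:subst-invariant-volumes} is substituted, since the generic volume form $\Omega$ cancels against the conormal pairings $\langle\beta_P,\cdot\rangle=\Omega(v_P,\cdot)$ and $\langle\beta_Q,\cdot\rangle=\Omega(v_Q,\cdot)$. The main obstacle is therefore not any single estimate but the bookkeeping: one must keep straight which tensor has been pulled back through $\varphi$, in which one-dimensional tangent or conormal line each scalar lives, and — most importantly — certify that every quantity absorbed into the $\sim$-relation (the $\lambda_i$, the $\frac{\partial s}{\partial v_P}(p_i)$, the residual divisor of $g\eta'$, the extra factor $h$, and $(\alpha_P,v_P)(p_3)$) really is a function of $n_P$ and $n_Q$ alone; the relevant independence statements are Lemma \ref{lem:n_p-dependance}, Lemma \ref{lem:eta'-independent} and Corollary \ref{cor:geta'}.
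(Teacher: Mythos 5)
Your proposal is correct and follows exactly the route the paper itself takes: the paper's entire ``proof'' of this theorem is the single sentence ``Using equations \ref{eq:main1}, \ref{eq:main2}, \ref{eq:main3}, taking into account corollary \ref{cor:geta'} and equations \ref{eq:ans1}, \ref{eq:ans2}, \ref{eq:ans3}, and using equation \ref{eq:subst-invariant-volumes} we obtain,'' which is precisely your branch-by-branch assembly. Your explicit coordinate check that $(\alpha_P, v_P)(p_3) = (n_P-1)^2/n_P \sim 1$ supplies the one detail the paper leaves implicit, so your write-up is if anything slightly more complete than the original.
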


Denote $1/\Omega_Q(v_Q(q_1), v_Q(q_2)) = a, 1/\Omega_P(v_P(p_1), v_P(p_2)) = b$.

Now, without loss of generality, we can normalize the gluing data in the following way:

\begin{equation}
\const \times (s_b(p_1), s_b(p_2), s_b(p_3)) \times (1; a; b)
\end{equation}

Here, $(s_b(p_1), s_b(p_2), s_b(p_3))$ is the gluing data of the bundle $\mathcal{O}([b]-[\psi_P])$, and $\const$ is some group element dependent only on $n_P, n_Q$ - so, geometrically, on the cross-ratio of $p_1, p_2, p_3, \psi_P$ and $q_1, q_2, q_3, \psi_Q$.

\begin{theorem} Consider the open subset of $M^u_X \subset M_X$ of such constructs that $p_N, q_N, n$ do not lie on the same line. Every fiber of the map $O: M^u_X \rightarrow J_X$ is smooth, and $O$ is surjective (restricted on this subset).
\end{theorem}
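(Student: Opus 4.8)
The plan is to prove the stronger statement that $dO$ is surjective at every point of $M^u_X$. Since $\dim J_X=2$, the regular value theorem then yields at once that every fibre of $O|_{M^u_X}$ is a smooth subvariety of codimension $2$, and it shows that $O(M^u_X)$ is open in $J_X$, so that the surjectivity assertion is reduced to checking, from the explicit formula, that the image is all of $J_X$. The starting point is that by the proof of Corollary~\ref{cor:moduli-smooth} the differential of $O$ coincides with $\ad_{\xi}$, so $dO$ can be read off the formula for the gluing data of $T_X^1$ established above: up to a factor $\const(n_P,n_Q)\in J_X$ depending only on the cross-ratios $n_P,n_Q$ (equivalently, on the positions of the flexes $\psi_P,\psi_Q$), the gluing data equals
\[
\bigl(s_b(p_1):s_b(p_2):s_b(p_3)\bigr)\cdot(1;a;b),
\]
with $a=1/\Omega_Q(v_Q(q_1),v_Q(q_2))$, $b=1/\Omega_P(v_P(p_1),v_P(p_2))$ (a number, not to be confused with the free point $b\in P$, following the paper's own notation), and $\bigl(s_b(p_1):s_b(p_2):s_b(p_3)\bigr)$ the gluing datum of $\mathcal{O}([b]-[\psi_P])$.

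First I would extract one tangent vector in $\operatorname{im}dO$ by sliding the free point $b$ of the construct while keeping the two cubics and their flexes fixed; then $n_P$, $n_Q$, the numbers $a$, $b$ and $\const(n_P,n_Q)$ all stay put, and only the factor $\bigl(s_b(p_1):s_b(p_2):s_b(p_3)\bigr)$ moves. Normalising $\tau_P(p_1)=0$, $\tau_P(p_2)=\infty$, $\tau_P(p_3)=n_P$, this factor equals $\bigl(1/w:1:(n_P-1)/(n_P-w)\bigr)$ with $w=\tau_P(b)$, and differentiating in $w$ its velocity in $\mathrm{Lie}(J_X)=\mathbb{C}^3/\mathbb{C}$ is proportional to $\bigl(-1/w:0:1/(n_P-w)\bigr)$. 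This class is nonzero: it would vanish only if all three entries coincided, which is impossible since the middle one is $0$ and the others are not. So we obtain a nonzero direction $\partial_1\in\operatorname{im}dO$ at every point of $M^u_X$.

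The second, and crucial, step is to produce a direction $\partial_2\in\operatorname{im}dO$ independent of $\partial_1$, which forces $\operatorname{rank}dO=2$. For this I would vary $n_P$ and $n_Q$ (adjusting the cubics where the configuration forces coupling) and compute the velocity of $\const(n_P,n_Q)\cdot(1;a;b)$; since the cross-ratio pair $(n_P,n_Q)$ together with $(a,b)$ parametrise a space that surjects onto $J_X$, a direct computation with the formula produces such a $\partial_2$. This is exactly where the hypothesis that $p_N,q_N,n$ are not collinear is used: it is the non-degeneracy that keeps the geometric pairings entering the formula — for instance $\Omega_Q(v_P(p_1),v_P(p_2))$, $\Omega(v_Q(q_3),v_P(p_3))$, and the auxiliary factor $(\alpha_P,v_P)(p_3)$ — finite, nonzero, and varying nondegenerately, so that the displayed formula holds throughout $M^u_X$ and the relevant $2\times2$ Jacobian minor does not vanish there. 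I expect this transversality check to be the main obstacle, because $\const(n_P,n_Q)$, which carries all the flex-dependence, is the one genuinely unwieldy piece of the computation, and one must follow it accurately enough to see that it contributes a direction not already recorded by $\partial_1$.

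Finally, with $\operatorname{rank}dO=2$ in hand, $O(M^u_X)$ is open; to see it is all of $J_X$ I would specialise to the locus $b=\psi_P$, where $w=\tau_P(b)=1$, $s_b\equiv1$, and the factor $\mathcal{O}([b]-[\psi_P])$ is trivial. The gluing datum is then $\const(n_P,n_Q)\cdot(1;a;b)$; since translation by $\const(n_P,n_Q)$ is a bijection of $J_X$ and the elements $(1;a;b)$ with $a,b\in\mathbb{C}^{*}$ already form a full set of representatives of $(\mathbb{C}^{*})^3/(\mathbb{C}^{*})=J_X$, it only remains to verify that the remaining seven moduli parameters can be arranged to make $a$ and $b$ take every value in $\mathbb{C}^{*}$ — a routine check which, with the openness just established, completes the argument.
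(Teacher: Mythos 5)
Your step producing $\partial_1$ (sliding the extra point $b$, so that only the $\mathcal{O}([b]-[\psi_P])$ factor of the gluing data moves) is fine, but the heart of the theorem is the second independent direction, and there your proposal is a plan with an acknowledged hole rather than a proof. Worse, the route you sketch — varying $n_P,n_Q$ and following the velocity of $\const(n_P,n_Q)\cdot(1;a;b)$ — cannot be carried out from the formula as it stands: the gluing data was only computed up to unspecified factors depending on $n_P,n_Q$ (including the function $h$ that was deliberately never written down), so any variation that changes $n_P,n_Q$ requires redoing the entire gluing-data computation with all those factors tracked. Your appeal to the non-collinearity of $p_N,q_N,n$ ("keeps the pairings finite, nonzero and varying nondegenerately") is not tied to any concrete mechanism, so the $2\times 2$ minor you need is never exhibited.

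The paper sidesteps exactly this difficulty by freezing everything unknown: it fixes the construct and restricts to the subfamily $M_X^0$ of constructs obtained by moving $P$ (with $b$) and $Q$ separately by affine transformations of a chosen chart that preserve $n$. On $M_X^0$ the cross-ratios $n_P,n_Q$, hence all "$\sim$"-factors and the $s_b$ factor, are constant, and the remaining variation of the gluing data is identified (up to fixed constants) with the pair $\bigl(f_P(q_N),f_Q(p_N)\bigr)$. The non-collinearity hypothesis then enters with a precise role: an affine map fixing $n$ and $p_N$ can move $q_N$ (which is off the line through $n$ and $p_N$) transversally to the level set of $f_P$, changing $f_P(q_N)$ to first order while leaving $f_Q(p_N)$ untouched, and symmetrically for $P$. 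This produces a two-dimensional tangent subspace mapping isomorphically onto $T J_X$ at every point of $M_X^u$ (smoothness of fibers), and integrating the same moves — first making $f_Q(p_N)$ arbitrary, then $f_P(q_N)$ — gives surjectivity; note that your final "routine check" that $(a,b)$ sweeps $(\mathbb{C}^*)^2$ also needs $n_P,n_Q$ held fixed, otherwise the unknown $\const$ factor moves with them. So the missing idea in your write-up is precisely this restriction to a slice on which the uncomputed constants are constant, replaced in your plan by the one computation the paper was structured to avoid.
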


Consider a pair of nodal cubics $P$ and $Q$, satisfying the assumption above, pick a generic affine chart, and restrict ourselves with the following subspace of  $M^u_X$:
\begin{definition}
$M_X^{0}$ is a space of such constructs $(P', b')$, $Q'$ that $(P, b)$ is affine equivalent to $(P', b')$ in our chosen chart, and transform conjugating them preserves the point $n$, and the same is true for $Q$ and $Q'$. Denote by $f_{P'}$, $f_{Q'}$ their defining equations
\end{definition}

The values which were dependent only on $n_P, n_Q$ are constant on such subspace.

\begin{proof}
Then, it is easy to see that on $M_X^0$ \[\Omega_Q(v_{p_1}, v_{p_2}) = \const f_Q(p_N)\]
and for $\Omega_P$ symmetrically.

We would like to show that for any pair $P, Q$ there exists a two-dimensional tangent subspace in $M_X^0$ such that the differential of the map $(f_P(q_N), f_Q(p_N))$ is surjective.

It is very clear, because one can pick the affine transform which preserves $n$ and $p_N$, and moves $q_N$ anywhere not in the line $(n, p_N)$ - in particular, moves it transversely to the level set of the function $f_P$. Moving $Q$ with such an affine transform does not change the value $f_Q(p_N)$, yet changes $f_P(q_N)$ in the first order. Similar argument, of course, works for $P$.

That proves the first part of the theorem - for any construct there exists a two-dimensional tangent subspace in $M_X^0$, and hence, in $M_X^u$, which is projects isomorphically to the tangent space of $J_X$. That implies that the fibers are all smooth.

It is not hard to also come up with the family of constructs which maps to $J_X = (\mathbb{C}^*)^2$ surjectively - one should start with any pair $P, Q$ as above, then uses the transforms described above to make the value $f_Q(p_N)$ arbitrary without changing $f_P(q_N)$, then does the same for $f_P(q_N)$.
\end{proof}

\begin{corollary}\label{cor:epicwin}
There exists a smoothable construct.
\end{corollary}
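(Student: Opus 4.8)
The plan is to read the corollary off the two assertions of the preceding theorem — surjectivity of $O\colon M^u_X\to J_X$ and smoothness of all of its fibres over $M^u_X$ — combined with Corollary \ref{cor:moduli-smooth}. First I would note that the trivial line bundle $\mathcal O_{\Sing(X)}$ is a genuine point of $J_X\cong(\mathbb C^*)^3/(\mathbb C^*)$, namely the identity element, and that $O$ is really defined on all of $M^u_X$ with values in $J_X$ precisely because Kulikov's triple point condition has already been verified for the main construct. The surjectivity half of the theorem then produces a construct $X_0\in M^u_X$ with $O(X_0)=\mathcal O_{\Sing(X_0)}$, i.e. a $d$-semistable one. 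One can even point to an explicit such $X_0$ using the normalized gluing data $\const\times(s_b(p_1),s_b(p_2),s_b(p_3))\times(1;a;b)$: the factor $\const$ depends only on the cross-ratios $n_P,n_Q$ and is trivialized by choosing them, the factor $(1;a;b)$ is killed by the affine transformations fixing $n$ that appear in the proof of surjectivity, and $(s_b(p_1),s_b(p_2),s_b(p_3))$ — the gluing datum of $\mathcal O([b]-[\psi_P])$ — is trivialized by placing the marked point $b$ appropriately (concretely, at a flex of $P$).

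Next I would apply Corollary \ref{cor:moduli-smooth} at $X_0$. Its smoothness hypothesis — that $O^{-1}(\mathcal O_{\Sing(X_0)})$ be smooth at $X_0$ — is exactly the second assertion of the preceding theorem, applied to the fibre through $X_0$. The standing hypotheses of the Smoothing section hold for $X_0$ as well: geometric maximality and $\Delta_{X_0}\cong$ duncehat are built into the definition of the main construct, Kulikov's condition has been checked, and the unobstructedness of locally trivial deformations required by Assumption \ref{as:local-unobstructed} is clear because $M_X$ is explicitly an open subvariety of the product of the moduli of a pair of nodal cubics with the choice of a marked point, hence smooth at every construct. Corollary \ref{cor:moduli-smooth} then yields a smoothing of $X_0$, which is automatically geometrically maximal with duncehat dual complex.

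I do not anticipate a real obstacle here: the only substantive ingredient — smoothness of the fibre at the chosen construct — is exactly what the preceding theorem establishes, by exhibiting for every construct in $M^u_X$ a two-dimensional tangent plane inside $M_X^0$ on which the derivative of $(f_P(q_N),f_Q(p_N))$, and hence of $O$, is onto. The one point to stay alert about is that the explicit gluing data was computed only up to the $n_P,n_Q$-dependent factors, so I would make sure that both surjectivity and fibrewise smoothness are insensitive to those factors; they are, since that ambiguity is a single fixed element of the torus $J_X$ acting by translation, and translations are isomorphisms.
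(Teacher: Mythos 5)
Your proposal is correct and matches the paper's intended argument: the corollary is stated without proof precisely because it follows immediately from the preceding theorem (surjectivity of $O$ on $M^u_X$ plus smoothness of its fibres) together with Corollary \ref{cor:moduli-smooth}, which is exactly the chain of reasoning you lay out. Your additional checks of the standing hypotheses (Kulikov's condition, Assumption \ref{as:local-unobstructed} via smoothness of $M_X$) and the remark that the unknown $n_P,n_Q$-dependent factor is just a translation on the torus $J_X$ are sound and fill in details the paper leaves implicit.
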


\section{Properties of the smoothing}

In this section, we discuss the topology of the generic fiber of our smoothing family. Let us pick the new convention - the specific family we have constructed in the previous section will be referred as $\tilde{Y}$ to avoid confusion in the general discussions about any family.

Suppose $\tilde{X}$ is any family with normal crossing central fiber $X$ and smooth total space. Denote as $Z_i^0$ the manifold $Z_i \setminus Z_{i+1}$, and consider the following the bundle $T_i \rightarrow Z_i^0$ by $i-1$-dimensional tori: the torus over a point of $Z_i^0$ is a product of spherisation of normal bundles to $Z_i^0$ inside branches of $Z_{i-1}$ passing through it.

\begin{lemma}
The generic fiber $X^\varepsilon$ is homeomorphic to some space $X^{\mathbb{R}}$ representable as the disjoint union of $T_i$'s.
\end{lemma}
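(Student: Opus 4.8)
The plan is to build the model space $X^{\mathbb{R}}$ by replacing each open stratum $Z_i^0$ of the central fiber with the total space of the torus bundle $T_i$, and then to produce the homeomorphism $X^{\mathbb{R}} \to X^\varepsilon$ locally and glue. First I would set up the local picture: near a point of $Z_i^0$ the family $\tilde X$ looks, in the extended log atlas, like $\{x_1\cdots x_i = \pi^* z\} \subset \mathbb{C}^n$ (after the semistable reduction / toroidal analysis this is exactly the normal form obtained in the proof of Theorem \ref{thm:cool-smooth}, and for our $\tilde Y$ it holds on the nose since the construct is $d$-semistable with $k=1$). In such a chart the fiber $X^\varepsilon$ is $\{x_1\cdots x_i = \varepsilon\}\times(\text{free coordinates})$, and the projection $(x_1,\dots,x_i)\mapsto (|x_1|,\dots,|x_i|)$ realises $\{x_1\cdots x_i=\varepsilon\}$ as (an open subset of) the torus $\{(\theta_1,\dots,\theta_i): \theta_1\cdots\theta_i = \arg\varepsilon\}\cong (S^1)^{i-1}$-bundle over a simplex-with-corners in the positive orthant. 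Restricting attention to the locus where exactly the coordinates $x_1,\dots,x_i$ are ``small'', this is precisely $T_i$ over the corresponding piece of $Z_i^0$; the spherisation of the normal bundle to $Z_i^0$ inside the branch $Z_{i-1}$ cut out by $x_j=0$ gives the $j$-th circle factor.

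Next I would make the decomposition of $X^\varepsilon$ into these pieces precise. Choose a small $\delta>0$ and, for each stratum, let $W_i\subset X^\varepsilon$ be the locus where the coordinates vanishing along $Z_i$ all have modulus $\le\delta$ and the remaining ``singular'' coordinates have modulus $\ge\delta$ (so $W_i$ sits over a compact subset of $Z_i^0$ shrunk away from $Z_{i+1}$). By compactness of $X$ and a partition-of-unity/bump-function argument these $W_i$, together with transition collars, cover $X^\varepsilon$; refining $\delta$ and using the flow of a suitable vector field one deforms the collars so that the $W_i$ become disjoint and their union is all of $X^\varepsilon$ up to homeomorphism. On each $W_i$ the local computation above gives a fibrewise homeomorphism to (a deformation retract of) $T_i$, and one checks these are compatible on overlaps because the circle factors are canonically the spherised normal directions, which are intrinsic. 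Assembling gives a homeomorphism $X^\varepsilon\cong\bigsqcup_i T_i = X^{\mathbb{R}}$.

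The main obstacle I expect is the gluing/bookkeeping step, not the local model: one must show that the ``intermediate'' regions between $W_i$ and $W_{i+1}$ (where some coordinate modulus is near $\delta$) can be absorbed, i.e. that $X^\varepsilon$ genuinely decomposes as a \emph{disjoint} union of the $T_i$ rather than as a space with the $T_i$ glued along boundaries. This is where one needs that the argument of $\varepsilon$ is fixed and that the monodromy/phase constraint $\theta_1\cdots\theta_i=\arg\varepsilon$ cuts the naive $(S^1)^i$ down to $(S^1)^{i-1}$ exactly, so that the boundary strata of one piece match dimensions with the next lower piece; a careful choice of the cutoff function and an explicit radial reparametrisation of $\{x_1\cdots x_i=\varepsilon\}$ (pushing all the ``interface'' mass onto the higher-codimension stratum) should do it. A secondary technical point is to check the no-branch-switching hypothesis is what makes the circle factors of $T_i$ globally well-defined (so that $X^{\mathbb{R}}$ is defined at all), which is immediate here since all strata of $X$ are rational, hence simply connected.

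\hfill$\square$
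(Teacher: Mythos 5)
Your proposal is correct in outline but takes a genuinely different route from the paper. The paper's proof is a two-line construction: take the real oriented blowup of $\tilde X$ along the central fiber $X$; this is a manifold with boundary mapping with full-rank differential to the real oriented blowup of $D$ at $0$, so Ehresmann's lemma identifies $X^\varepsilon$ with the fiber over a boundary point, and that boundary fiber comes with a canonical map to $X$ whose preimage of each open stratum $Z_i^0$ is exactly $T_i$. The partition into the $T_i$ is therefore automatic (it is induced by the stratification of $X$ itself) and requires no choice of cutoffs. You instead carve up $X^\varepsilon$ directly by moduli cutoffs $|x_j|\lessgtr\delta$; this is the right explicit picture, and your identification of the phase constraint cutting $(S^1)^i$ down to $(S^1)^{i-1}$ is exactly what makes the fibers of $T_i$ tori of dimension $i-1$. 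But the step you yourself flag as the main obstacle --- turning the overlapping closed regions $W_i$ into a genuine partition into torus bundles over the \emph{full} open strata, stretching collars and absorbing interfaces --- is precisely the bookkeeping that the blowup construction eliminates. Two smaller remarks: the statement asks only for a partition into locally closed pieces (a stratification), not for topologically disjoint closed blocks, so part of the difficulty you anticipate dissolves under the correct reading; and the local normal form $x_1\cdots x_i=\pi^*z$ holds for any family with smooth total space and reduced normal crossing central fiber (this is the extended log atlas), so no appeal to Theorem \ref{thm:cool-smooth} or to $d$-semistability is needed here.
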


\begin{proof}
This fact seems to be well known, however we did not manage to find the exact reference (there is a writeup by W. D. Gillam \cite{WDG} available on the net, but overall this construction seems to be the part of the folklore) so sketch the proof here. The construction is as follows: consider the real oriented blowup of the family $\tilde{X}$ in the central fiber $X$. It can be checked locally that the resulting space is a manifold with corners, admits canonical structure of the manifold with boundary, and is mapped into the real oriented blowup of $D$ in point $0$ with differential of the full rank. Then, by Ehresmann's lemma, the fibers are diffeomorphic, and the fiber over any preimage of $0$ will be of the form described above.
\end{proof}

\begin{corollary}[also well known]
\[\chi(X^{\varepsilon}) = \chi^c (X \setminus \Sing(X))\]
\end{corollary}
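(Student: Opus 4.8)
The plan is to deduce the Euler characteristic formula directly from the previous lemma, which identifies $X^\varepsilon$ (up to homeomorphism) with a space $X^{\mathbb{R}}$ that decomposes as a disjoint union of the torus bundles $T_i \to Z_i^0$. Since Euler characteristic is additive over such a (locally closed) decomposition, I would first write
\[
\chi(X^\varepsilon) = \chi(X^{\mathbb{R}}) = \sum_i \chi_c(T_i),
\]
using that for a finite stratification into locally closed pieces the ordinary Euler characteristic of the total space equals the sum of the compactly supported Euler characteristics of the strata (this is the standard additivity of $\chi_c$, valid for reasonably nice spaces such as these). So the problem reduces to computing each $\chi_c(T_i)$.

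Next I would use the multiplicativity of $\chi_c$ in fibre bundles: $T_i \to Z_i^0$ is a fibre bundle whose fibre is an $(i-1)$-dimensional real torus (a product of circles, one for each branch of $Z_{i-1}$ through a point of $Z_i^0$). For $i \geq 2$ the fibre is a nonempty torus of positive dimension, hence has Euler characteristic $0$, so $\chi_c(T_i) = \chi_c(Z_i^0)\cdot \chi_c(T^{i-1}) = 0$. The only surviving term is $i = 1$, where $Z_1 = \Sing(X)$ and $Z_1^0 = \Sing(X)\setminus Z_2$; here the "torus" is $0$-dimensional, i.e.\ $T_1 \to Z_1^0$ has finite fibres. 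Wait — more carefully, the correct surviving contribution should organize itself so that everything collapses onto $Z_0^0 = X\setminus\Sing(X)$, so I would double-check the indexing: the bundle $T_i$ over $Z_i^0$ has fibre dimension $i$ (the product of spherized normal bundles inside the $i$ branches of $Z_{i-1}$), vanishing Euler characteristic for $i\geq 1$, and only the open stratum $Z_0^0 = X \setminus \Sing(X)$ contributes, with its fibre being a point. Hence
\[
\chi(X^\varepsilon) = \chi_c(X\setminus\Sing(X)),
\]
which is the claim.

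The main obstacle, and the only place requiring genuine care, is getting the indexing and the fibre dimensions exactly right so that the vanishing argument applies to precisely the strata one expects, and confirming that the decomposition of $X^{\mathbb{R}}$ furnished by the lemma is a decomposition into locally closed subsets to which additivity of $\chi_c$ legitimately applies (the pieces $T_i$ are total spaces of torus bundles over the smooth locally closed strata $Z_i^0$, so this is fine, but it should be stated). Everything else is the two standard facts: additivity of $\chi_c$ over locally closed stratifications and its multiplicativity in fibre bundles, together with $\chi(S^1 \times \cdots) = 0$. I would also remark that since all spaces in sight are complex-algebraic (or real-analytic manifolds with corners), $\chi = \chi_c$ for the compact $X^\varepsilon$, so no distinction between the two is needed on the left-hand side.
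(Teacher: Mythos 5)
Your proposal is correct and is essentially the paper's own (one-line) argument: additivity of $\chi_c$ over the decomposition of $X^{\mathbb{R}}$ into the pieces $T_i$, together with the vanishing of $\chi_c$ for bundles with positive-dimensional torus fibers, so that only the open stratum $X\setminus\Sing(X)$ contributes. Your worry about the indexing is harmless: whichever convention one takes, the only stratum with point fibers is $X\setminus\Sing(X)$, which is all the argument needs.
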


Here, $\chi^c$ is an Euler characteristic with compact support.

\begin{proof}
$\chi^c$ is additive on disjoint union and $0$ for all bundles with toral fibers.
\end{proof}

That alows the computation of cohomology of $Y^{\varepsilon}$:

\begin{corollary}
\[\chi(Y^{\varepsilon}) = 11\]
\[h^{1,1}(Y^\varepsilon) = 9\]
\end{corollary}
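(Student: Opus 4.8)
The plan is to compute both $\chi(Y^{\varepsilon})$ and $h^{1,1}(Y^{\varepsilon})$ by combining the topological input from the previous corollaries with the Hodge-theoretic input from the Lemma on cohomology of the dual complex. First I would compute $\chi(Y^{\varepsilon}) = \chi^c(X \setminus \Sing(X))$ using stratified additivity of $\chi^c$. The central fiber $X$ has a single irreducible component with normalization $\hat{X}$, so $X \setminus \Sing(X) \cong \hat{X} \setminus \nu^{-1}(\Sing(X))$. Recall that $\hat{X}$ is obtained from $\mathbb{P}^2$ by nine blowups (eight at intersections of the two cubics, one extra at a smooth point of $P$), so $\chi(\hat{X}) = 3 + 9 = 12$. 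The gluing locus in $\hat{X}$ is $P \cup Q$, where $P$ and $Q$ are rational curves each with one node, meeting at one point $n$; the normalizations $\hat P, \hat Q$ are each $\mathbb{P}^1$. Using additivity, $\chi^c(P \cup Q)$ is obtained from $\chi(\hat P) + \chi(\hat Q) = 4$ by the identifications: $p_1 \sim p_2$ on $\hat P$, $q_1 \sim q_2$ on $\hat Q$, and $p_3 \sim q_3$, i.e. three pairs of points collapsed to three points — wait, more carefully, two double points and one intersection, so $\chi(P\cup Q) = 4 - 1 - 1 - 1 = 1$. Hence $\chi^c(X \setminus \Sing(X)) = 12 - 1 = 11$, giving $\chi(Y^{\varepsilon}) = 11$.

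Next I would pin down the Hodge numbers of $Y^{\varepsilon}$. Since the dual complex $\Delta_X$ is the duncehat, which is contractible, it has trivial rational (indeed integral) cohomology, so by the Lemma of Section 2.6 we get $h^{1,0}(Y^{\varepsilon}) = h^{2,0}(Y^{\varepsilon}) = 0$. From the Properties-of-the-smoothing section (the family $\tilde{Y}$ has generic fiber of general type and minimal, as established there), $Y^{\varepsilon}$ is a minimal surface of general type with $p_g = q = 0$. For such a surface $b_1 = 2q = 0$, $b_3 = 0$ by Poincaré duality, $b_0 = b_4 = 1$, and $b_2 = h^{2,0} + h^{1,1} + h^{0,2} = h^{1,1}$. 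Therefore $\chi(Y^{\varepsilon}) = 2 - b_1 + b_2 = 2 + h^{1,1}$, and plugging in $\chi = 11$ yields $h^{1,1}(Y^{\varepsilon}) = 9$.

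The two ingredients I would need to cite carefully are: (i) $q(Y^{\varepsilon}) = 0$, which follows from $h^{1,0} = 0$ via the preceding Lemma together with the degeneration being geometrically maximal (true here since all components and strata are rational), and (ii) the absence of torsion/extra contributions, i.e. that $b_1 = 0$ genuinely gives the Euler characteristic formula — this is just the standard Betti number count for a simply-connected (or at least $b_1 = 0$) surface, and simple-connectivity is in any case established in the same section. The main obstacle, such as it is, is bookkeeping: making sure the stratified Euler characteristic count of $X \setminus \Sing(X)$ correctly accounts for the three nodal identifications on $P \cup Q$ (two self-nodes, one mutual intersection) and the correct blowup count on $\hat X$; once those combinatorics are right, both equalities drop out immediately.
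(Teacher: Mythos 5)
Your proof is correct and follows essentially the same route as the paper: compute $\chi^c(\hat{X}\setminus(P\cup Q)) = (3+9) - (2+2-3) = 11$ and then deduce $h^{1,1}=9$ from $\chi = 2 + h^{1,1}$ using $h^{1,0}=h^{2,0}=0$. The only difference is that you spell out the Betti-number bookkeeping that the paper leaves implicit; the appeal to minimality and general type is not actually needed for that step, since $b_1=0$ already follows from $h^{1,0}=h^{0,1}=0$.
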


\begin{proof}
Let us calculate $\chi^c(Y \setminus \Sing(Y))$. Euler characteristic of $\mathbb{P}^2$ is $3$, we add $9$ for nine blow ups, subtract $4$ for removing two rational curves and add $3$ for three nodal points. $3 + 9 - 4 + 3 = 11$.

The second row just follows from $h^{1,0}=h^{2,0}=0$.
\end{proof}

Then, we would like to check that $Y^\varepsilon$ is a minimal surface of general type (the fact that it is of general type follows from it being not rational and geography of Chern numbers, but minimality does not). It also automatically implies that the fibers of the family are not only analytic, but algebraic projective surfaces.

\begin{lemma}\label{lem:general}
$Y^{\varepsilon}$ is a surface of general type.
\end{lemma}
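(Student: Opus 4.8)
The plan is to show that $Y^{\varepsilon}$ has Kodaira dimension $2$ by combining the already-computed invariants with the Enriques--Kodaira classification. We already know $h^{1,0}(Y^{\varepsilon})=h^{2,0}(Y^{\varepsilon})=0$ (from the geometric maximality and triviality of the rational cohomology of the duncehat, via the lemma on cohomology of the dual complex), so $q(Y^{\varepsilon})=p_g(Y^{\varepsilon})=0$, and we have just computed $\chi(Y^{\varepsilon})=11$, hence $e(Y^{\varepsilon})=11$ and, by Noether's formula $12\chi(\mathcal{O})=K^2+e$ with $\chi(\mathcal{O})=1-q+p_g=1$, we get $K_{Y^{\varepsilon}}^2=12-11=1$. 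The first thing I would do is record these numerical consequences cleanly.

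Next I would run through the classification of surfaces with $q=p_g=0$ and eliminate every possibility other than general type. Surfaces with $p_g=q=0$ cannot be abelian, bielliptic, K3, or Enriques (all of those have $p_g=q$ not both zero, or $\chi(\mathcal{O})\neq 1$ in a way that fails, e.g. Enriques has $K^2=0$ while we have $K^2=1$); they could a priori be rational, or properly elliptic, or of general type. The rational case is excluded because $Y^{\varepsilon}$ is not rational: it is the general fiber of a geometrically maximal degeneration with dual complex having the rational homotopy type of a point but which is \emph{not collapsible}, and by the theorem of de Fernex--Koll\'ar--Xu cited in the introduction a rational surface's degeneration must have a collapsible dual complex — so rationality would contradict our construction. (Alternatively: a minimal rational surface has $K^2\in\{8,9\}$ or is a Hirzebruch/$\mathbb{P}^2$, and blowing up only decreases $K^2$, but more robustly one cites the duncehat being non-collapsible.) The properly elliptic case must also be excluded: an elliptic surface with $\chi(\mathcal{O})=1$, $q=0$ has $K^2=0$ on its minimal model, incompatible with $K^2_{Y^{\varepsilon}}=1$ \emph{provided} $Y^{\varepsilon}$ is minimal — which is exactly what the following lemma (minimality) will establish; so strictly speaking I would either prove general type first using non-rationality plus $K^2=1>0$ and properness of the elliptic fibration failing, or defer the cleanest argument until minimality is in hand. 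The cleanest route: once minimality is known, $K_{Y^{\varepsilon}}^2=1>0$ together with $Y^{\varepsilon}$ not rational and not ruled forces Kodaira dimension $2$, since minimal surfaces with $\kappa\le 1$ have $K^2=0$.

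Concretely, the key steps in order are: (1) deduce $q=p_g=0$, $\chi(\mathcal{O})=1$, $e=11$, $K^2=1$; (2) observe $Y^{\varepsilon}$ is not rational, invoking the de Fernex--Koll\'ar--Xu result together with the non-collapsibility of the duncehat (this is where the whole paper's point enters); (3) conclude that in the Enriques--Kodaira classification $Y^{\varepsilon}$ is neither rational, nor ruled (ruled over a curve of genus $g\ge 1$ would give $q=g\ge1$), nor of Kodaira dimension $0$ (those have $\chi(\mathcal{O})\le 2$ and, for $p_g=q=0$, are Enriques with $K^2=0$), nor properly elliptic — the last point using minimality (Lemma on minimality, which I would state as proved next) to say its minimal model has $K^2=1\neq 0$ whereas minimal elliptic surfaces have $K^2_{\min}=0$; hence $\kappa(Y^{\varepsilon})=2$.

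\textbf{Main obstacle.} The genuinely non-formal input is step (2): ruling out rationality. Everything else is bookkeeping with standard invariants and the classification table. Ruling out rationality rests on the contrapositive of the cited theorem — a geometrically maximal degeneration of a rational surface has collapsible dual complex — applied to our explicit duncehat, which is contractible but provably not collapsible; so the logical crux is that the duncehat is not collapsible (a classical fact about the dunce hat) and that our $\Delta_X$ is genuinely homeomorphic (indeed PL-homeomorphic) to it, which was the content of the Example and Obstruction-mapping sections. If one wanted a proof independent of $\cite{dFKX}$, the alternative obstacle would be to show directly that $Y^{\varepsilon}$ carries no pencil of rational curves / has no $(-1)$- or $(0)$-curve structure making it ruled, e.g. via the subsequent minimality lemma plus $K^2=1$, $e=11$ contradicting the numerology of rational surfaces; I would present the classification-based argument as primary and remark on this alternative.
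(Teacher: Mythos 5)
Your proposal follows essentially the same route as the paper: Noether's formula gives $K^2=1$, $c_2=11$ with $\chi(\mathcal{O})=1$, and rationality is excluded by the de Fernex--Koll\'ar--Xu theorem applied to the non-collapsible duncehat, after which the classification of surfaces leaves only general type. The one point where you diverge, and where you should be careful, is the treatment of possible non-minimality. Your ``cleanest route'' defers the exclusion of the properly elliptic case to the minimality lemma, but in the paper that lemma (Theorem \ref{thm:minimal}) is proved \emph{using} the fact that $Y^{\varepsilon}$ is of general type, so that ordering is circular as the text stands. The paper sidesteps this without any minimality input: since $\chi(\mathcal{O})$ is a birational invariant and each blow-down only decreases $c_2$, the minimal model of $Y^{\varepsilon}$ still satisfies $c_1^2+c_2=12$ with $c_2\le 11$, hence has $K^2\ge 1>0$; a minimal surface with $K^2>0$ (and $q=0$) is rational or of general type, and rationality is killed by non-collapsibility. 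You in fact gesture at exactly this fix in your hedged alternative, so the gap is one of execution rather than idea: make the minimal-model numerology ($c_2$ drops under blow-down, $\chi(\mathcal{O})$ birational) the primary argument, and drop the appeal to the later minimality lemma.
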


\begin{proof}
By Noether's formula it has $c_1^2 + c_2 = 12\chi(\mathcal{O}_{Y^\varepsilon}) = 12$, and $c_2 = 11$. As we do not know it is minimal, we conclude that it could be a blow-up of some surface with $c_1^2 + c_2 = 12$ and $c_2 \leq 11$. Such surfaces are either rational or of general type by the classification of surfaces, and we know that $Y^{\varepsilon}$ is not rational, because $\Delta_Y$ is not collapsible.
\end{proof}

\begin{theorem}\label{thm:minimal}The generic fiber of $\tilde{Y}$ is minimal.
\end{theorem}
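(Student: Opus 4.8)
The plan is to show that the generic fiber $Y^{\varepsilon}$ contains no $(-1)$-curves, using the degeneration to control what rational curves can appear on the nearby fiber. Since we already know (Lemma \ref{lem:general}) that $Y^{\varepsilon}$ is of general type with $c_2 = 11$ and $c_1^2 = 1$, minimality is equivalent to the statement that $Y^{\varepsilon}$ is \emph{not} the blow-up of a surface with $c_1^2 = 2$; equivalently, there is no smooth rational curve $E \subset Y^{\varepsilon}$ with $E^2 = -1$. A $(-1)$-curve would have $K_{Y^{\varepsilon}} \cdot E = -1 < 0$, so it suffices to rule out curves $E$ with negative intersection against the canonical class, or more simply, to rule out any irreducible curve $E$ with $E^2 < 0$ and $K \cdot E < 0$.

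First I would set up the specialization map for curves (or for divisor classes / the Néron--Severi lattice) along the family $\tilde{Y} \to D$. Because the total space $\tilde{Y}$ is smooth, a $(-1)$-curve $E$ on the generic fiber spreads out to a family of curves over a punctured disk, whose flat limit $E_0$ is an effective Cartier divisor on the central fiber $X$, with $E_0^2 = -1$ computed inside $\tilde{Y}$ and $K_{\tilde{Y}} \cdot E_0 = -1$ by adjunction/continuity of intersection numbers. Since $K_{\tilde{Y}}|_X = K_X$ (the central fiber is a principal, hence trivial-normal-bundle, divisor, so adjunction gives $K_X = K_{\tilde{Y}}|_X$), I get an effective divisor class on $X$ — i.e. on the single component $\hat{X}$ together with its gluing along $P \cup Q$ — with prescribed numerical properties. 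The key computation is then to pull $E_0$ back to the normalization $\hat{X}$ and analyze it there: $\hat{X}$ is $\mathbb{P}^2$ blown up at $9$ points (the $8$ chosen intersection points of the cubics plus the extra point $b$), so its $(-1)$-curves and effective curve classes are completely explicit, and I would enumerate which effective classes $D$ on $\hat{X}$ with the correct self-intersection can descend to $X$ — a descent condition involving the incidence of $D$ with $P$ and $Q$ and compatibility under the gluing $\varphi$.

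The main obstacle is handling the fact that the limit $E_0$ need not be irreducible and need not be disjoint from $\Sing(X)$ — in general $E_0$ may acquire components supported on $P$, $Q$, or contain $P$, $Q$ themselves with multiplicity, so the naive "limit of a $(-1)$-curve is a $(-1)$-curve on the component" fails. I would deal with this by working with the numerical/homological invariants that \emph{are} continuous: $E_0 \cdot E_0 = -1$, $E_0 \cdot K_X = -1$, $E_0$ effective, and then decomposing $E_0 = \tilde{E} + a P + b Q$ (pushing forward from $\hat{X}$), and using the intersection numbers $P^2, Q^2, P\cdot Q, K_X \cdot P, K_X\cdot Q$ on $\hat{X}$ — which are exactly the numbers computed in the ``Kulikov's triple point condition'' proof ($P^2 = -2$, $Q^2 = -1$ after the blow-ups, $P \cdot Q = 1$ in $\hat{X}$, and $K_{\hat{X}} \cdot P = K_{\hat{X}} \cdot Q = 0$ since $P, Q$ are anticanonical-type curves, i.e. proper transforms of cubics minus the blown-up points) — to constrain $a, b$ and $\tilde{E}$. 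A short lattice/positivity argument should then force $a = b = 0$ and show $\tilde{E}$ cannot exist: any candidate class on $\hat{X}$ with $K \cdot \tilde{E} < 0$ meeting the anticanonical curves $P, Q$ appropriately is either itself a component that fails the gluing-compatibility (no-branch-switching / the descent condition from Lemma \ref{lem:glue}) or has wrong self-intersection.

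An alternative, possibly cleaner route I would keep in reserve: use that $h^{1,1}(Y^{\varepsilon}) = 9$ and $c_1^2(Y^{\varepsilon}) = 1$, so $K_{Y^{\varepsilon}}^2 = 1$; a minimal surface of general type with $p_g = q = 0$ and $K^2 = 1$ is a numerical Godeaux surface, and if $Y^{\varepsilon}$ were non-minimal it would be the blow-up of a surface with $K^2 = 2$, $p_g = q = 0$. Then I would derive a contradiction from the structure of the degeneration: the limiting mixed Hodge structure / the Clemens--Schmid sequence for this geometrically maximal degeneration with duncehat dual complex forces the vanishing of the part of $H^2$ coming from a $(-1)$-class in a way incompatible with such a class being a limit of an algebraic cycle — but this requires more machinery than the direct divisor-limit argument, so I expect the paper to take the first, hands-on route via explicit curves on $\hat{X} = \mathrm{Bl}_9 \mathbb{P}^2$ and the gluing data.
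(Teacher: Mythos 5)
Your opening move coincides with the paper's: a $K$-negative (e.g.\ a $(-1)$-) curve on the generic fiber specializes to an effective $1$-cycle on the central fiber, some component of which is still $K_Y$-negative. But the decisive step of your argument --- the ``short lattice/positivity argument'' that is supposed to force $a=b=0$ and exclude $\tilde E$ --- is only asserted, and the numbers you propose to feed into it are not the right ones. On $\hat Y = \mathrm{Bl}_9\,\mathbb{P}^2$ one has, as divisor classes, $P = 3H - E_1-\cdots-E_8-E_b$ and $Q = 3H - E_1 - \cdots - E_8$, so $P^2 = 0$, $Q^2 = 1$, $K_{\hat Y}\cdot P = 0$, $K_{\hat Y}\cdot Q = -1$; the values $-2$ and $-1$ you quote are the degrees of the normal bundles of the immersed normalizations (the numbers entering Kulikov's triple point formula), not N\'eron--Severi intersection numbers, and $K_{\hat Y}\cdot Q \neq 0$. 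More importantly, the class you must control on the normalization is not $K_{\hat Y}$ but $\nu^*K_Y = K_{\hat Y} + P + Q$ (the conductor $P+Q$ must be added); your sketch works with $K_{\hat X}$ directly, so the bookkeeping would come out wrong. Finally, ``$E_0^2=-1$ computed inside $\tilde Y$'' is problematic: self-intersection of a curve in a threefold is not defined, and making sense of it for a possibly non-reduced limit cycle sitting in the singular fiber (and possibly containing $P$ or $Q$) is precisely the delicate part your plan defers.

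The paper's proof rests on one observation absent from your sketch: $\nu^*K_Y = K_{\hat Y}+P+Q = \pi^*\mathcal{O}(3) - E_1-\cdots-E_8$, and this class is nef, because the eight blown-up points lie on the irreducible cubic (by B\'ezout, a curve of degree $d$ passing through them with total multiplicity $k$ satisfies $3d\ge k$, and the value on the proper transforms of the two cubics is also nonnegative). Hence $K_Y$ is nonnegative on every curve of $Y$, which immediately contradicts the existence of a $K_Y$-negative limiting component --- no control of $E_0^2$, no decomposition $E_0=\tilde E + aP + bQ$, and no descent or gluing conditions are needed; reducibility of the limit is harmless since only $K$-negativity of a single component is used. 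If you want to salvage your route, the nefness of $K_{\hat Y}+P+Q$ is the lemma you should isolate and prove; as written, the proposal has a genuine gap at its central step.
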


\begin{proof}
As we already know it is of general type, we can prove that there is no $K_{\tilde{Y}/D}$-negative contraction. Indeed, assume that the generic fiber admits some $K_{\tilde{Y}/D}$-negative curve. Then, there will be a limiting curve in $Y$, with one of its components still being $K_Y$-negative. However, note that $\nu_Y^*(K_Y) = K_{\hat{Y}}([P]+[Q])$. $\hat{Y}$ is a projective plane blown up in $9$ points, $8$ of which lie in $P cap Q$, and one lies on $P$. Denote as $\pi: \hat{Y} \rightarrow \mathbb{P}^2$ the projection on the projective plane, as $E_1, ... E_8$ - exceptional curves which do lie in the preimage of $P \cap Q$, $E_b$ - the last exceptional curve. Then, \[K_{\hat{Y}}([P]+[Q]) = \pi^*(\mathcal{O}(3)) - E_1 - ... - E_8\]

Such bundle is nonnegative on any curve - it is a pullback of $-K$ of the plane with $8$ points blown up, and as these points lie in, say $P$, any curve $C$ which passes through them $k$ times will have the degree at least $\lceil k/3 \rceil$.
\end{proof}

Now, we would like to calculate the fundamental group of $Y^{\varepsilon}$. We shall do it for our combinatorial model $Y^{\mathbb{R}}$.

\begin{lemma}\label{lem:pi1=0}
$\pi_1(Y \setminus \Sing(Y)) = 0$ 
\end{lemma}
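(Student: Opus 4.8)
The space $Y \setminus \Sing(Y)$ is the complement of $P \cup Q$ inside $\hat{Y}$ (more precisely it is $\hat Y \setminus (P \cup Q)$ viewed through the normalization, since removing the singular locus of $Y$ corresponds to removing the preimage of the gluing locus from $\hat Y$). So the first step is to reduce the computation to a Zariski--van Kampen type statement about $\pi_1(\hat{Y} \setminus (P \cup Q))$. Recall $\hat{Y}$ is $\mathbb{P}^2$ blown up in $9$ points, $8$ of which are intersection points of the two cubics $P', Q'$ and one of which is a generic point $b$ of $P'$; and $P, Q$ are the strict transforms of the cubics.

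\textbf{Key steps.} First I would use the blow-down $\pi : \hat Y \to \mathbb{P}^2$. Blowing up only changes the fundamental group of a surface-minus-divisor in a controlled way; since all $9$ blown-up points lie on $P' \cup Q'$, we have $\hat Y \setminus (P \cup Q) \cong \mathbb{P}^2 \setminus (P' \cup Q')$ as the exceptional divisors are entirely absorbed into (the total transform of) $P \cup Q$ minus the strict transform — one must check that the strict transform plus the part of the exceptional locus not meeting it still leaves exactly the complement of $P' \cup Q'$; the generic point $b$ needs slightly more care, but blowing up a smooth point of $P'$ and then removing the strict transform of $P'$ gives back a space homotopy equivalent to the original complement. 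Thus it suffices to compute $\pi_1(\mathbb{P}^2 \setminus (P' \cup Q'))$ where $P', Q'$ are two nodal cubics in general position. Second, I would invoke the classical theory of fundamental groups of complements of plane curves: for a curve $C$ of degree $d$ in $\mathbb{P}^2$, there is a surjection from $\pi_1(\mathbb{P}^2 \setminus C)$ onto $\mathbb{Z}/d$ coming from the linking number, and the group is generated by meridians $\mu_1, \dots$ around the branches with the relation that a product of all meridians (weighted) is trivial. For $P' \cup Q'$, which has degree $6$, using a generic pencil of lines and the Zariski--van Kampen method, $\pi_1$ is generated by meridians of $P'$ and $Q'$, and the abelianization is $\mathbb{Z}^2/(\text{degree relations})$.

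\textbf{Finishing.} The abelianization of $\pi_1(\mathbb{P}^2 \setminus (P'\cup Q'))$ is $(\mathbb{Z} \mu_P \oplus \mathbb{Z} \mu_Q)/(3\mu_P + 3\mu_Q = 0) \cong \mathbb{Z} \oplus \mathbb{Z}/3$ — which is \emph{not} zero. So $\pi_1(\mathbb{P}^2 \setminus (P' \cup Q'))$ itself is not trivial; the vanishing of $\pi_1(Y \setminus \Sing(Y))$ must use the blow-up at $b$ more seriously, or (more likely) the statement is really about the further quotient that builds $Y$ from $\hat Y$ — i.e. the gluing $\varphi : \hat P \to \hat Q$ kills exactly the difference between the two meridians, and passing to $Y \setminus \Sing(Y)$ (rather than $\hat Y$ minus the two curves) identifies $\mu_P$ with $\mu_Q^{-1}$ via a loop crossing through the gluing, collapsing $\mathbb{Z} \oplus \mathbb{Z}/3$ down. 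The exceptional curve $E_b$ provides an extra meridian relation that the coordinate charts near the triple point do not see in $\hat Y$. So the real content is: (1) set up the van Kampen presentation for $\mathbb{P}^2 \setminus (P' \cup Q')$ with an explicit pencil, tracking the braid monodromy at the two nodes and the $8$ (or $9$) intersection points; (2) translate the identification along the gluing map into a relation $\mu_P \mu_Q = 1$; (3) incorporate the blow-up at $b$; (4) conclude the resulting group is trivial. \textbf{The main obstacle} I expect is computing the braid monodromy of the sextic $P' \cup Q'$ precisely enough — two nodal cubics meeting in $8$ assigned points plus one free point is a degenerate configuration and the Zariski--van Kampen relations are delicate; alternatively, one sidesteps this by working directly with the combinatorial model $Y^{\mathbb{R}}$ as the lemma statement suggests, building $Y \setminus \Sing(Y)$ cell-by-cell from the torus bundles $T_i$ and computing $\pi_1$ of that union via van Kampen on the stratification, which trades algebraic-geometry input for a hands-on but finite CW computation.
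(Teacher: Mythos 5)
Your reduction in the first key step is exactly where the argument breaks. It is true that $Y \setminus \Sing(Y) \cong \hat Y \setminus (P \cup Q)$, but this space is \emph{not} homotopy equivalent to $\mathbb{P}^2 \setminus (P' \cup Q')$: when you blow up points of $P' \cup Q'$ and remove only the strict transforms, the punctured exceptional curves remain inside the complement, and they are precisely what changes $\pi_1$. The exceptional curve $E_b$ over $b$ meets $P$ in one point, so $E_b \setminus P \cong \mathbb{C}$ sits in the complement and caps off a meridian of $P$, forcing $\mu_P = 1$; each exceptional curve over one of the eight blown-up intersection points meets $P$ and $Q$ once each, so its punctured copy $\cong \mathbb{C}^*$ lies in the complement and makes $\mu_P$ homotopic to $\mu_Q^{-1}$ (up to sign). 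Your later guess that the missing relations come from the gluing map $\varphi$ cannot work: the gluing locus is exactly $\Sing(Y)$, which has been removed, and the normalization is an isomorphism over the complement, so no loop in $Y \setminus \Sing(Y)$ ever crosses the identification $P \sim Q$. The relation $\mu_P\mu_Q = 1$ comes from the blow-ups at the intersection points, not from the gluing.

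The second problem is strategic: you do not need the braid monodromy of the sextic, and attempting it is the hard road. The paper instead quotes the Deligne--Fulton solution of Zariski's problem (the reference [D2]): the fundamental group of the complement of a nodal plane curve is abelian. Since blowing up only adds relations (any loop can be pushed off the exceptional locus), $\pi_1(\hat Y \setminus (P \cup Q))$ is a quotient of the abelian group $H_1(\mathbb{P}^2 \setminus (P' \cup Q')) \cong (\mathbb{Z}\mu_P \oplus \mathbb{Z}\mu_Q)/(3\mu_P + 3\mu_Q)$, which you computed correctly; imposing $\mu_P = 0$ (from $E_b$) and $\mu_P + \mu_Q = 0$ (from the eight blown-up intersections) kills it entirely. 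Finally, the fallback you mention of working with $Y^{\mathbb{R}}$ conflates this lemma with the next theorem: $Y^{\mathbb{R}}$ models the nearby fiber $Y^{\varepsilon}$, whereas the present lemma is purely about the open surface $\hat Y \setminus (P \cup Q)$, and is an input to (not a consequence of) the analysis of $Y^{\mathbb{R}}$.
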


\begin{proof}
By Severi's problem \cite{D2}, the fundamental group of the complement of the nodal curve $P \cup Q$ in $\mathbb{P}^2$ is abelian. The blowups can only add relations (because any loop can be perturbed in such a way that it doesn't go through the exceptional locus). Hence, $\pi_1(Y \setminus \Sing(Y)) = H_1(Y \setminus \Sing(Y))$.

Then, $H_1(\mathbb{P}^2 \setminus (P \cup Q))$ is generated by two generators - little loops going around $P$ and $Q$. The blow up in the point $b$ forces the loop around $P$ to become contractible. The blow up in any point of intersection of $P$ and $Q$ makes these loops homotopy equivalent (up to a sign).
\end{proof}

Next theorem we prefer to prove in a bit higher generality, it works for any family $X$ with normal crossing fiber.

\begin{theorem} \label{thm:dxcontractible-pi1}
Suppose $\pi_1(\Delta_X) = 0$, and every component of $X \setminus \Sing(X)$ also has trivial fundamental group. Then, $\pi_1(X^{\mathbb{R}}) = \pi_1(X^{\varepsilon}) = 0$.
\end{theorem}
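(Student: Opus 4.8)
The plan is to build the fundamental group of $X^{\mathbb{R}}$ from its decomposition into the torus bundles $T_i \to Z_i^0$ via van Kampen's theorem, using the contractibility of $\Delta_X$ and the simple connectivity of the open strata of the components. First I would recall from the earlier lemma that $X^{\mathbb{R}}$ is obtained from the real oriented blowup of $\tilde X$ along $X$; concretely, $X^{\mathbb{R}}$ is glued out of the pieces $T_i \to Z_i^0$, where $T_0 \to Z_0^0 = X \setminus \Sing(X)$ is an isomorphism (the ``$0$-torus'' is a point), $T_1 \to Z_1^0$ is a circle bundle over $\Sing(X) \setminus (\text{triple points})$, and $T_2 \to Z_2^0$ is a $2$-torus over each triple point. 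The gluing identifies the boundary circles/tori of the real oriented blowup of one stratum with the corresponding pieces of the link of the next. So the strategy is: understand $\pi_1$ of each piece and of the gluing loci, then assemble.

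The key steps, in order: (1) Over $Z_0^0$, the piece $T_0 = \hat X_a \setminus (\text{preimage of }\Sing(X))$ for each component $a$ is simply connected by hypothesis; but I must be careful that $X^{\mathbb{R}}$ contains not this open piece but its real-oriented-blowup closure, which adds boundary tori over the strata, and the fundamental group of the closure surjects onto (indeed equals, since adding boundary does not change $\pi_1$ when glued along connected pieces) that of the open part — so each ``component piece'' contributes a simply connected chunk. (2) The circle fiber of $T_1$ over a point of $\Sing(X) \setminus \{\text{triple points}\}$ maps into the adjacent component pieces: going once around the normal circle to $Z_1$ inside a branch $B$ of $X$ bounds a small disk in the real oriented blowup of that branch, hence is null-homotopic in $X^{\mathbb{R}}$. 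This is the crucial local fact — the normal $S^1$-directions die. (3) Apply van Kampen iteratively: start with the disjoint union of the (simply connected) component pieces, glue in the $T_1$ pieces over edges of $\Delta_X$ — each such gluing, by step (2) and since the base of $T_1$ over an edge is an interval (contractible), amounts to amalgamating along simply connected loci, contributing no new generators and no new relations except possibly identifying path components; identification of path components is exactly controlled by the $1$-skeleton of $\Delta_X$, so after this step $\pi_1$ of the glued-up $1$-skeleton piece is a free group on $H_1(\Delta_X^{(1)})$-many generators, i.e., on loops in the dual graph. (4) Gluing in the $T_2$ pieces over triple points (vertices of the $2$-cells of $\Delta_X$) kills exactly the loops in $\Delta_X$ that bound $2$-cells; since $\pi_1(\Delta_X) = 0$ these generate everything, so $\pi_1(X^{\mathbb{R}}) = 0$. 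The bookkeeping here is precisely that $\pi_1(X^{\mathbb{R}})$ is an extension/quotient reflecting $\pi_1(\Delta_X)$ once all the fiber-direction loops are known to be trivial.

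The main obstacle I expect is step (2)–(3): making rigorous the claim that the normal-circle loops in $T_1$ become trivial in $X^{\mathbb{R}}$, and more generally that the gluing maps of the real oriented blowup behave well enough that van Kampen applies cleanly with the stated overlaps being connected (and with correctly computed $\pi_1$). One has to check that the link of a double curve inside the total space $\tilde X$ is, after real oriented blowup, a circle bundle whose total space deformation retracts appropriately, and that a meridian circle is filled by a disk coming from a transverse slice in the smooth total space — this is where smoothness of $\tilde X$ (equivalently the local model $x_1 x_2 = t$, resp. $x_1 x_2 x_3 = t$ at triple points) enters decisively. A secondary subtlety is the passage from $\pi_1(X^{\mathbb{R}})$ to $\pi_1(X^\varepsilon)$, which is just the homeomorphism of the previous lemma, so costs nothing. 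I would organize the write-up as: recall the $T_i$-decomposition; prove the meridian-killing lemma from the local model; then run van Kampen over the poset of strata, tracking generators as $H_1$ of the dual graph and relations as the $2$-cells of $\Delta_X$, concluding $\pi_1(X^{\mathbb{R}}) \cong \pi_1(\Delta_X) = 0$.
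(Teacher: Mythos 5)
Your overall strategy --- assemble $\pi_1(X^{\mathbb{R}})$ from the stratification by the torus bundles $T_i$, kill the fiber directions, and identify what is left with $\pi_1(\Delta_X)$ --- is sound and rests on the same three ingredients as the paper's proof: simple connectivity of the open component strata, the $2$-torus local model $x_1x_2x_3=t$ at the triple points, and $\pi_1(\Delta_X)=0$. The paper packages this differently: rather than running van Kampen over the strata it manipulates loops directly, assigning to each transverse loop its ``combinatorial image'' (an edge-path in $\Delta_X$), using $\pi_1(\Delta_X)=0$ together with shellability to reduce to lassos around single triangles, and contracting those lassos via the explicit $T^2$ sitting over a triple point. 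Either packaging can be made to work; yours demands more care in choosing the open cover and computing the fundamental groups of the (non-simply-connected) overlaps, which you correctly identify as the bookkeeping burden.

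There is, however, one concretely false step: the ``crucial local fact'' in your step (2), namely that a meridian circle of the double locus ``bounds a small disk in the real oriented blowup of that branch.'' It does not. In the local model $x_1x_2=t$ the nearby fiber is $\mathbb{C}^{*}\times\mathbb{C}$ and the meridian is the generator of $\pi_1(\mathbb{C}^{*})=\mathbb{Z}$; correspondingly, the real oriented blowup of the branch $\{x_2=0\}$ along the double curve replaces the normal disk by an annulus, so the meridian is homotopic to the exceptional circle and is \emph{not} null-homotopic in any local piece. The normal $S^1$-directions die only for a global reason: the meridian is a loop inside $X_a\setminus\Sing(X)$, which is simply connected by hypothesis. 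So the conclusion you need survives, but it is a consequence of your step (1), not of a local computation --- and the distinction matters, since the whole content of the component hypothesis is exactly to kill these classes. A second, smaller slip: the base of $T_1$ over an edge of $\Delta_X$ is not an interval but the corresponding double curve minus its triple points (a punctured rational curve); its base loops, too, die only because they push into the simply connected component pieces. With these two repairs your van Kampen argument goes through and reproduces, in different clothing, the paper's cancellation, lifting, lasso, and elementary-move lemmas.
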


We split the proof in a few parts. Let us start by perturbing a loop $\gamma$ in such a way that it is transverse to the stratification of $X^{\mathbb{R}}$. Then, the loop will intersect the real codimension $1$ stratum $T_1$ finite amount of times.

\begin{definition}
The \textbf{combinatorial image} of $\gamma$ is a edge-path $\gamma'$ in $\Delta_X$ which moves along the edges corresponding to the components $\gamma$ intersects.
\end{definition}

\begin{lemma}\label{lem:cancel}
In the assumptions of the Theorem \ref{thm:dxcontractible-pi1} every path $\tau$ with combinatorial image $e^{-1}e$ for some edge $e$ (i.e. path going through some codimension $1$ strata and then coming back through it) is homotopic as a path with fixed ends to the path with combinatorial image $1$ (not leaving the open part $X \setminus \Sing(X)$).
\end{lemma}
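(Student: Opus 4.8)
The plan is to analyze the local geometry of $X^{\mathbb{R}}$ near the codimension-one stratum $T_1$ that the path $\tau$ crosses, and to push the crossing into a neighborhood where cancellation is visible. First I would recall that $T_1 \to Z_1^0$ is an $S^0$-bundle (a pair of points — the spherized normal directions in the two branches of $X$ through the double curve), so a tubular neighborhood of $T_1$ in $X^{\mathbb{R}}$ looks like $(\text{annulus}) \times (\text{disk in }Z_1^0)$: concretely, near a generic point of the double curve the real blowup of $\{xy = t\}$ at $\{x=y=t=0\}$ is a cylinder $S^1 \times \mathbb{R}$ over a disk, and the two "sides" $x=0$ and $y=0$ correspond to the two ends of the cylinder, i.e. to the two endpoints of the edge $e$ in $\Delta_X$. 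The path $\tau$ enters this cylinder from one side, touches the circle fiber over $T_1$, and returns to the same side — it does not wind around to the other end.

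Next I would make precise what "combinatorial image $e^{-1}e$" buys us: after perturbing $\tau$ to be transverse to $T_1$, the hypothesis says $\tau$ meets $T_1$ in exactly two points $a_1, a_2$ with opposite signs, and between consecutive crossings $\tau$ stays in a single open stratum component (a component of $X \setminus \Sing(X)$). The key step is then a local cancellation move: choose a small product neighborhood $N \cong A \times B$ of a short arc in $Z_1^0$ joining the images of $a_1$ and $a_2$ (possible since $Z_1^0$ is connected along the relevant component and the two crossings lie on the same double curve), where $A$ is the cylinder fiber over that arc and $B$ is a disk transverse to it; arrange that $\tau$ enters and exits $N$ through the same end of the cylinder $A$. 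Inside $N$, the subpath of $\tau$ from its entrance to its exit can be homotoped rel endpoints off the core circle $A \times \{0\}$: the cylinder $A$ minus its core deformation retracts onto either end, and a path in a cylinder with both endpoints near the same end and not required to stay off a point can be slid to that end. This replaces the local $e^{-1}e$ excursion by a path lying entirely in the open component of $X \setminus \Sing(X)$ corresponding to that end, i.e. with combinatorial image $1$.

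I would then assemble this into the statement: the homotopy constructed is supported in $N$, hence is a homotopy of $\tau$ rel endpoints, and the resulting path has combinatorial image $1$ because it never crosses $T_1$. Here is where the two hypotheses enter — $\pi_1(\Delta_X) = 0$ is not actually needed for this particular lemma (it will be used later to control the global combinatorial image), but the triviality of $\pi_1$ of the open components is what guarantees that once we are in a single open stratum we have enough room; more importantly, what we really use is just that the ambient space near $T_1$ is locally a cylinder-times-disk, which is a purely local fact about normal crossings. The one subtlety to be careful about: between the two crossings $a_1$ and $a_2$ the path $\tau$ might wander far inside the open component and around the double curve $Z_1^0$ itself, so I should first homotope $\tau$ (rel endpoints, within the open component, which is allowed since that does not change the combinatorial image) to bring the two crossing points close together over a single short arc in $Z_1^0$ — this uses connectedness of $Z_1^0$ and is where one genuinely needs to know the open stratum is path-connected to its boundary double curve in a controlled way.

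\textbf{Main obstacle.} The hard part will be the "bringing the two crossings close together" step and checking that the resulting local picture over the connecting arc in $Z_1^0$ is genuinely a trivial cylinder bundle (no monodromy that would obstruct the slide) — i.e. ensuring the normal $S^0$ does not get swapped, which is exactly the no-branch-switching assumption already in force. Once the local model is pinned down as $A \times B$ with $\tau$ entering and leaving the same end, the cancellation itself is an elementary homotopy in a cylinder and should go through without difficulty.
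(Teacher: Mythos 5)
Your proposal has a genuine gap at its central step. The ``local cancellation move'' in the product neighborhood $N$ can only move the portions of $\tau$ that actually lie in $N$, but between the two crossings $a_1$ and $a_2$ the path traces out an arc $\beta$ that lives in the open component on the far side of the stratum and may leave $N$ entirely. Bringing the two crossing \emph{points} close together along the (connected) codimension-one stratum does not confine $\beta$ to $N$: after that step $\beta$ is still a long arc in the open component whose endpoints happen to be near each other, i.e.\ essentially a loop $\ell$ based near the crossing. A homotopy ``supported in $N$'' cannot remove the two intersections with $T_1$ as long as part of $\beta$ remains outside $N$, since the path must still cross $T_1$ to reach that part. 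The missing step is precisely to contract $\ell$ inside the open component, and this is where the hypothesis that every component of $X\setminus\Sing(X)$ is simply connected is indispensable --- the statement is not a consequence of the local cylinder-times-disk model alone, contrary to your remark that ``what we really use is just that the ambient space near $T_1$ is locally a cylinder-times-disk.'' The paper's proof makes exactly this structure explicit: using connectedness of the stratum it writes $\tau=\tau_2\,m^{-1}\ell\, m\,\tau_1$ with $m$ a single short transversal crossing and $\ell$ a loop in the open part, contracts $\ell$ by the $\pi_1=0$ hypothesis on the components, and only then cancels $m^{-1}m$ (which is your local move). You do gesture at the $\pi_1$ hypothesis, but as an aside about ``having enough room,'' not as the load-bearing step; as written the argument would ``prove'' the lemma even when the open components have nontrivial fundamental group, where it is false.

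Two smaller points. First, $T_1\to Z_1^0$ is not an $S^0$-bundle: in the local model $xy=t$ the fiber of the real oriented blowup over a double point is the vanishing circle $S^1$, which is consistent with your later (correct) cylinder picture but not with the ``pair of points'' description. Second, your worry about monodromy of the normal directions is legitimate and is indeed where the no-branch-switching assumption enters, but it concerns the well-definedness of the combinatorial image rather than the cancellation itself.
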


\begin{proof}
Note that the path $\tau$ enters and leaves through the same codimension $1$ stratum. As it is connected, the path $\tau$ is equivalent to a path of the form: \[\tau = \tau_2 m^{-1} \ell m\tau_1\]
where $\ell$ is a loop in the open part, $\tau_1, \tau_2$ - paths in the open part, $m$ is a short path going through the codimension $1$ component corresponding to $e$. Then, $\ell$ is contractible by the assumption of the theorem, and then $m^{-1}m$ is contracted.
\end{proof}

\begin{lemma}\label{lem:lift}
Any edge-path $\tau'$ in $\Delta_X$ admith a lift - a path $\tau$ in $X^{\mathbb{R}}$ which has the combinatorial image $\tau'$. We can choose the ends of $\tau$ as we wish - provided they lie in the components of $X \setminus \Sing(X)$ corresponding to the ends of $\tau'$
\end{lemma}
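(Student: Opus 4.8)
To prove Lemma \ref{lem:lift} the plan is to reduce to lifting a single edge and then concatenate, using the structure of the real oriented blow-up model $X^{\mathbb{R}}$ recalled above. First I would isolate the two elementary facts needed about this model. Its open stratum is the disjoint union, over the vertices $a$ of $\Delta_X$, of the pieces $\hat X_a^{\circ}:=\hat X_a\setminus\nu_a^{-1}(\Sing(X))$ (which agree with $X_a\setminus\Sing(X)$ away from the singular locus), and each $\hat X_a^{\circ}$ is path-connected, being the complement of a proper closed analytic subset in an irreducible surface. The real codimension-one stratum $T_1$ is a circle bundle over $Z_1^0$; over a non–triple point $z$ lying on an irreducible component $C$ of $\Sing(X)$ with branches $B_1,B_2$ of $X$, a neighbourhood of the fiber $T_1|_z$ in $X^{\mathbb{R}}$ is modelled on $\{xy=\varepsilon\}\cong\mathbb{C}^{*}$ times a disc along $C$, with $\{|x|<\delta\}$ sitting inside the open piece attached to $B_1$, $\{|x|>\delta^{-1}\}$ inside the one attached to $B_2$, and the core $\{|x|=|y|\}$ equal to $T_1|_z$. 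Both facts are immediate from the blow-up construction; the only thing to check carefully is that, for $\delta$ small and $z$ away from the finitely many triple points, this slice meets $\Sing(X)$ only in the core circle, so that $\{|x|<\delta\}$ and $\{|x|>\delta^{-1}\}$ genuinely lie in open pieces.

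Next I would lift a single edge. Given an edge $e$ of $\Delta_X$ joining vertices $v$ and $w$, by definition $e$ corresponds to an irreducible component $C$ of $\Sing(X)$ together with an ordering of its two branches; by the no–branch–switching assumption these branches are globally defined along $C$ and are (sheets of) $X_v$ and $X_w$, allowing $v=w$. Choose a non–triple point $z\in C$ and, in the local slice above, take an arc running from a point with $|x|$ small to a point with $|x|$ large, oriented so that it goes from the $v$-side to the $w$-side. This arc crosses $T_1|_z$ exactly once and transversally, hence has combinatorial image $e$, and its endpoints lie in $\hat X_v^{\circ}$ and $\hat X_w^{\circ}$.

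Finally, given an edge-path $\tau'$ with consecutive edges $e_1,\dots,e_k$, where $e_j$ joins $v_{j-1}$ and $v_j$, and prescribed endpoints $x_0\in\hat X_{v_0}^{\circ}$, $x_k\in\hat X_{v_k}^{\circ}$, I would take a single-edge lift $c_j$ of each $e_j$ as above, then inside each path-connected piece $\hat X_{v_j}^{\circ}$ join the arrival point of $c_j$ to the departure point of $c_{j+1}$ (and $x_0$ to the start of $c_1$, the end of $c_k$ to $x_k$) by an arc with trivial combinatorial image. The concatenation $\tau$ is then a path in $X^{\mathbb{R}}$ with the prescribed endpoints whose combinatorial image is $e_1\cdots e_k=\tau'$; the case $k=0$ is the constant path. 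The only genuinely technical ingredient is the local normal form of $X^{\mathbb{R}}$ near a smooth point of the double locus and the verification that crossing the neck advances the combinatorial position by exactly one edge — everything else is formal bookkeeping.
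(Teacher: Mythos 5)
Your argument is correct and is essentially the paper's own proof, which consists of the single observation that the open components corresponding to vertices of $\Delta_X$ are connected, so edge-lifts can be concatenated; you have simply spelled out the local normal-form verification for crossing the codimension-one stratum that the paper leaves implicit. No gap.
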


\begin{proof}
It is obvious from the fact that the components of $X\setminus\Sing(X)$ corresponding to vertices of $\Delta_X$ are connected - and this is true by the very construction of the dual complex.
\end{proof}

\begin{lemma}\label{lem:lasso}
Any edge-loop $\gamma'$ in $\Delta_X$ is a composition of the elementary loops of the form $(s'_i)^{-1}\sigma'_i(s'_i)$ where $s'_i$ are some paths and $\sigma'_i$ are loops going around exactly one triangle.
\end{lemma}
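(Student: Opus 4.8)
The plan is to reduce the statement to the standard combinatorial presentation of the fundamental group of a two‑dimensional complex, applied to the geometric realization of $\Delta_X$ with its natural CW structure (cells $=$ non‑degenerate simplices). Fix a base vertex $v$ and work in the connected component of $\Delta_X$ containing our loop (for the surfaces we care about $\Delta_X$ has a single vertex, so this is automatic). Recall that $\pi_1(\Delta_X^{(1)},v)$, the fundamental group of the $1$‑skeleton, is free, and that an edge‑loop based at $v$ represents an element of it, two such edge‑loops representing the same element precisely when they are related by a finite sequence of insertions and deletions of backtracks $e\,e^{-1}$ (this is just the tree‑shaped universal cover of a graph). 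Attaching the $2$‑simplices and applying van Kampen identifies $\pi_1(\Delta_X,v)$ with the quotient of $\pi_1(\Delta_X^{(1)},v)$ by the normal subgroup $N$ generated by the boundary edge‑loops of the $2$‑simplices of $\Delta_X$; since $\Delta_X$ is two‑dimensional there are no higher cells and hence no further relations.

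Now invoke the running hypothesis $\pi_1(\Delta_X,v)=0$ of Theorem \ref{thm:dxcontractible-pi1}: it forces $\pi_1(\Delta_X^{(1)},v)=N$. Thus the class of any edge‑loop $\gamma'$ based at $v$ can be written as a product $\prod_i w_i\,\sigma_i^{\pm 1}\,w_i^{-1}$ of conjugates of boundary classes $\sigma_i$ of single $2$‑simplices, with $w_i\in\pi_1(\Delta_X^{(1)},v)$. Choosing an edge‑path $s_i'$ representing each $w_i$ and letting $\sigma_i'$ be the length‑three boundary edge‑path of the corresponding triangle — read off its three faces, with no trouble if, as in the duncehat, some of these edges coincide — the concatenation $\prod_i (s_i')^{-1}\sigma_i'{}^{\pm 1}(s_i')$ is an edge‑loop representing the same free‑group class as $\gamma'$. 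Replacing $\sigma_i'$ by its reverse where the sign is negative, we conclude that $\gamma'$ is obtained from a composition of elementary loops of the stated form $(s_i')^{-1}\sigma_i'(s_i')$ by backtrack moves, which is exactly the assertion of the lemma.

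The only delicate point is the passage, in the last sentence, from a product of conjugates in the free group to an actual edge‑path: the equality holds on the nose only after cancelling backtracks $w_i^{-1}w_{i+1}$ arising between consecutive factors, so at the level of edge‑paths the word "composition" is meant up to insertion and removal of $e\,e^{-1}$, all of which stay inside $\Delta_X^{(1)}$. This is harmless for the use of the lemma in Theorem \ref{thm:dxcontractible-pi1}: each $\sigma_i'$ and each $s_i'$ lifts to $X^{\mathbb{R}}$ by Lemma \ref{lem:lift}, and the backtracks are removed there by Lemma \ref{lem:cancel}, so it is cleanest to read and prove the statement in those terms rather than insisting on a literal equality of paths. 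Everything else is standard combinatorial topology — freeness of the fundamental group of a graph, van Kampen's presentation of the $\pi_1$ of a $2$‑complex — and involves no computation; the essential input is precisely the hypothesis $\pi_1(\Delta_X)=0$, without which (e.g.\ the "wrong case", where $\pi_1=\mathbb{Z}/3$) the lemma is false.
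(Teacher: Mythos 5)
Your proof is correct, but it takes a different route from the paper's. The paper argues via combinatorial homotopy theory: it invokes the classical fact that a null-homotopic edge-loop in a triangulated space can be deformed to the constant loop by a finite sequence of elementary moves (replacing one edge of a triangle by the other two, or vice versa), a fact it justifies by shellability of triangulated $2$-disks; the lasso decomposition is then read off from that sequence of moves. You instead use the algebraic presentation of $\pi_1$ of a $2$-complex via van Kampen: the hypothesis $\pi_1(\Delta_X)=0$ forces the free group $\pi_1(\Delta_X^{(1)})$ to coincide with the normal closure $N$ of the boundary words of the triangles, so every edge-loop is a product of conjugates of triangle boundaries. The two arguments are of course two faces of the same standard fact, but your version has two concrete advantages: it applies verbatim to $\Delta_X$ as a triangulated set (CW complex with identified faces, as in the duncehat) without worrying about whether the simplicial-approximation/shelling argument is stated in that generality, and you explicitly flag the one genuinely delicate point that the paper glosses over — the equality of the loop with the product of lassos holds only up to insertion and deletion of backtracks $e\,e^{-1}$, which is harmless because those backtracks are disposed of in $X^{\mathbb{R}}$ by Lemma \ref{lem:cancel}. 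The paper's approach, on the other hand, produces the lasso decomposition as an explicit sequence of local modifications of the loop itself, which is slightly closer in spirit to the lifting argument of Lemma \ref{lem:global-lift}. Either proof serves the subsequent application equally well.
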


\begin{proof}
This is true for any contractible loop in any triangulated space and follows easily from the following well-known statement - any contractible edge-loop is contractible by the sequence of elementary operations either swapping an edge to the pair of edges moving around the other side of the triangle, or vice versa. This statement follows from the shellability of any triangulation of $2$-dimensional disk.
\end{proof}

\begin{lemma}\label{lem:global-lift}
Any (transverse to the stratification) loop $\gamma$ in $X^{\mathbb{R}}$ admits the analogous decomposition: \[\gamma = \gamma_n ... \gamma_1 \]
\[\gamma_i = s_i^{-1} \sigma_i s_i\]
with $s_i$ being some paths and $\sigma_i$ being loops with the combinatorial image going around the boundary of $1$ triangle.
\end{lemma}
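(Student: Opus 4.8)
The idea is to push the purely combinatorial decomposition of Lemma \ref{lem:lasso} up into $X^{\mathbb{R}}$, using the lifting device of Lemma \ref{lem:lift} together with Lemma \ref{lem:cancel}, by realizing each elementary combinatorial move as a homotopy of the evolving loop. First I would perturb $\gamma$ to be transverse to the whole stratification; then it avoids every torus $T_i$ with $i\geq 2$ and meets the codimension-one stratum $T_1$ in finitely many points, and reading off the component of $\Sing(X)$ it crosses at each of these points (with the direction of crossing) produces its combinatorial image $\gamma'$, an edge-loop in $\Delta_X$ based at the vertex over the component of $X\setminus\Sing(X)$ containing the basepoint $x_0$. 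Cutting $\gamma$ at its $T_1$-crossings writes it as an alternating concatenation of arcs in the open part and short arcs crossing $T_1$ once; splicing in, at each cut, a lift (supplied by Lemma \ref{lem:lift}) of a minimal edge-path from the base vertex and its reverse, and cleaning up the backtracking this introduces via Lemma \ref{lem:cancel}, I would rewrite $\gamma$, up to homotopy, as a based loop whose combinatorial image is a fixed word realizing $\gamma'$ and which is already organized as a concatenation of conjugated single-edge crossings.

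Now Lemma \ref{lem:lasso} (applicable since $\pi_1(\Delta_X)=0$ in the setting of Theorem \ref{thm:dxcontractible-pi1}) presents $\gamma'$ as a product $(s'_N)^{-1}\sigma'_N s'_N\cdots(s'_1)^{-1}\sigma'_1 s'_1$ of lassos around single triangles, obtained from $\gamma'$ by a finite sequence of elementary edge moves. Each such move has to be realized by a homotopy of the evolving loop in $X^{\mathbb{R}}$. A spike insertion or deletion $e\,e^{-1}$ is realized by pushing a little finger of the loop through the wall $T_1$ over the double curve recorded by $e$ and pulling it back, which is a small, manifestly legitimate homotopy. A move trading one side of a triangle $[abc]$ for its other two sides is realized by first sliding the offending crossing point along the connected double curve $D_{ab}$ into a neighbourhood $U$ of the triple point $t_{abc}$, and then rerouting inside $U$: topologically $U$ is the product of the amoeba triangle of $xyz=\varepsilon$ (a simply connected $2$-disc, with its three sheets at the vertices, the three $T_1$-walls along the medians, and $T_2$ at the barycentre) with a torus, so the sub-arc of the loop inside $U$ — a path from the $B_a$-sheet to the $B_b$-sheet crossing one wall — is homotopic rel endpoints to a path between the same two endpoints crossing the other two walls (project to the disc, homotope there, lift back keeping the torus coordinate fixed). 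Composing all these homotopies carries $\gamma$ to a loop $\tilde\gamma$ whose combinatorial image is the above product word; keeping track of the tails introduced in the first step, $\tilde\gamma$ is literally a concatenation $\gamma_N\cdots\gamma_1$ with $\gamma_i=s_i^{-1}\sigma_i s_i$, where $s_i$ lifts $s'_i$ and $\sigma_i$ lifts $\sigma'_i$, so that the combinatorial image of $\sigma_i$ runs once around the boundary of a single triangle. This is the asserted decomposition.

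I expect the delicate point to be the triangle move together with the bookkeeping around it: one must be honest about the local structure of $X^{\mathbb{R}}$ near a triple point — that it is the amoeba-triangle times a torus, with the three sheets at the vertices, the three $T_1$-walls along the medians, and $T_2$ at the barycentre — and check that after sliding the crossing point into $U$ the two endpoints of the sub-arc genuinely lie on the two prescribed sheets, so that the rerouting inside the disc is meaningful; and one must verify that splicing in and later removing the tails from Lemma \ref{lem:lift} (with the help of Lemma \ref{lem:cancel}) really groups $\tilde\gamma$ into the pattern $s_i^{-1}\sigma_i s_i$ rather than merely something homotopic to it. The transversality, the spike moves, and the sliding of a crossing point along the circle bundle over a connected double curve are all routine.
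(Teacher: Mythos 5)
Your proposal is correct and rests on the same skeleton as the paper's proof: apply Lemma \ref{lem:lasso} to the combinatorial image $\gamma'$, lift the resulting lasso word with Lemma \ref{lem:lift}, and invoke Lemma \ref{lem:cancel}. Where you diverge is in how the homotopy between $\gamma$ and the lifted word $\tilde\gamma=\prod s_i^{-1}\sigma_i s_i$ is produced. You realize each elementary combinatorial move (spike insertion/deletion, triangle swap) by an explicit homotopy in $X^{\mathbb{R}}$, which forces you into the local analysis near a triple point (the disc-times-torus model of $xyz=\varepsilon$) and into the bookkeeping you yourself flag as delicate. The paper instead lifts the entire lasso word in one step, in an arbitrary way, and then observes that the combinatorial image of $\tilde\gamma^{-1}\gamma$ is a word that reduces to the trivial one by backtrack cancellations alone (this is the standard van Kampen fact that a null-homotopic edge-loop is \emph{freely} equal to a product of conjugates of triangle boundaries), so Lemma \ref{lem:cancel} plus simple connectivity of the open strata contracts $\tilde\gamma^{-1}\gamma$ outright. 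This shortcut makes your triangle-move rerouting unnecessary for the present lemma; the local model near the triple point is only needed later, in Lemma \ref{lem:elementary-move}, to arrange that the $\sigma_i$ can be chosen contractible. Your route is sound but duplicates that local work, and it demands more care (exact equality of concatenations versus equality up to homotopy, which is all the statement is used for); the paper's comparison trick buys a shorter argument at the cost of quoting the free-triviality fact implicitly.
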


\begin{proof}
Consider the decomposition of $\gamma'$ and lift $\sigma'_i$'s and $s'_i$'s in some arbitrary way (only thing required is that they are composable). Denote the composition of these lifts $\tilde{\gamma}$. The combinatorial image of $\tilde{\gamma}^{-1}\gamma$ is a path which is removable by operations from Lemma \ref{lem:cancel}, from which it follows that it is contractible.
\end{proof}

\begin{lemma}\label{lem:elementary-move}
For any triangle $m \in \Delta_X$ there exists a loop around the corresponding real codimension $2$ component in $X^{\mathbb{R}}$ which is contractible.
\end{lemma}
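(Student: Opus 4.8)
The statement is local around the triple point $t$ of $X$ corresponding to the $2$-simplex $m$. Recall the description of $X^{\mathbb{R}}$ as a disjoint union of the torus bundles $T_i$ over the strata $Z_i^0$: the piece $T_2$ lying over $t$ is a single real $2$-torus $\mathbb{T}_t$, and, since we are in the no branch switching situation and $m$ is an honest $2$-simplex, the three branches $C_1,C_2,C_3$ of $\Sing(X)$ through $t$ are distinct, so $\mathbb{T}_t$ is embedded in the real $4$-manifold $X^{\mathbb{R}}$ with codimension $2$. This $\mathbb{T}_t$ is the ``real codimension $2$ component'' of the statement, and by a loop around it I mean a meridian, i.e. the boundary of a small $2$-disk transverse to $\mathbb{T}_t$. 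The plan is to write this out in the local model and exhibit such a meridian explicitly, together with a disk that it bounds.

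Near $t$ the total space $\tilde X$ is isomorphic to $\mathbb{C}^3_{x,y,z}$ with $\pi = xyz$ and $X = \{xyz = 0\}$; correspondingly a neighbourhood of $\mathbb{T}_t$ in $X^{\mathbb{R}}$ is diffeomorphic to a neighbourhood of the ``core torus'' $\mathbb{T}_t = \{\,|x|=|y|=|z|,\ \arg x+\arg y+\arg z\equiv\arg c\,\}$ inside the smooth surface $\{xyz = c\}\cong(\mathbb{C}^*)^2$ for $0\neq|c|\ll1$, and in this chart the three codimension-$1$ strata $T_1$ abutting $\mathbb{T}_t$ — those over $C_1,C_2,C_3$ — are the three ``walls'' $\{|x_i|=|x_j|<|x_k|\}$. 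First I would fix the transverse real slice $\Sigma=\{\arg x=\arg y=\arg z=\tfrac13\arg c\}$: it is a smooth embedded $2$-disk (hence contractible) meeting $\mathbb{T}_t$ in the single point $p_0$ where $|x|=|y|=|z|=|c|^{1/3}$, and it is transverse to $\mathbb{T}_t$ there because at $p_0$ the tangent space $T_{p_0}\{xyz=c\}$ splits as the ``angle'' directions ($\sum d\arg x_i=0$), tangent to $\mathbb{T}_t$, plus the ``modulus'' directions ($\sum d\log|x_i|=0$), tangent to $\Sigma$. Then I would let $\sigma^{0}$ be the boundary of a small disk $D\subset\Sigma$ centred at $p_0$: this is a loop around $\mathbb{T}_t$, and it is null-homotopic in $X^{\mathbb{R}}$ for the trivial reason that it bounds $D$, and $D$ lies in $\Sigma\subset X^{\mathbb{R}}$.

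As a byproduct — and this is the form in which the lemma feeds into the argument via Lemma \ref{lem:cancel} and Lemma \ref{lem:lasso} — the disk $D$ crosses the three loci $\{|x|=|y|\}$, $\{|y|=|z|\}$, $\{|z|=|x|\}$ along three radii from $p_0$, so $\sigma^{0}$ meets the $T_1$-walls over $C_1,C_2,C_3$ once each in cyclic order; that is, the combinatorial image of $\sigma^{0}$ is exactly the boundary edge-loop of the $2$-simplex $m$, so $\sigma^{0}$ is genuinely a contractible representative of the elementary ``lasso around the triangle $m$''.

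The only real work, and the main obstacle, is pinning down the local normal form of $X^{\mathbb{R}}$ near $\mathbb{T}_t$ — i.e. checking carefully, from the real oriented blowup construction at a triple point, that $X^{\mathbb{R}}$ is a manifold there, that $\mathbb{T}_t$ is an embedded codimension-$2$ torus, and that the three $T_1$-strata appear as the walls of its normal cone arranged triangle-wise. Once that is in hand everything is immediate: a meridian of any embedded codimension-$2$ submanifold of a manifold bounds a small transverse $2$-disk and is therefore contractible. Alternatively one can bypass the detailed normal form altogether and argue purely topologically — $X^{\mathbb{R}}$ is a manifold and $\mathbb{T}_t\subset X^{\mathbb{R}}$ is an embedded codimension-$2$ submanifold, so a meridian bounds a normal $2$-disk — treating the combinatorial-image statement above as a separate, routine computation in the coordinates $x,y,z$.
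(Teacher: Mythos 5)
Your proof is correct, and it takes a genuinely different route from the paper's. The paper argues on the level of the fundamental group of the local model: the preimage of the triple point is a $2$-torus onto which the local model deformation retracts, each of the three wall-crossing moves is well defined only up to adding a generator of $H_1(T^2)\cong\mathbb{Z}^2$ (namely $(1,0)$, $(0,1)$ and $(-1,-1)$), and since these span $\mathbb{Z}^2$ the class of a triangle-boundary loop can be adjusted to zero by re-choosing the crossings. You instead exhibit the contractible representative directly, as the boundary of the explicit transverse disk $D\subset\Sigma=\{\arg x=\arg y=\arg z=\tfrac13\arg c\}$ inside the local fiber $\{xyz=c\}$, and verify separately that its combinatorial image is the boundary of $m$ (the three walls $\{|x_i|=|x_j|<|x_k|\}$ meet $\Sigma$ in three rays from $p_0$, crossed once each in cyclic order). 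Your version is more constructive and needs no computation of $\pi_1$ of the local model, only that $D$ lies in $X^{\mathbb{R}}$; the paper's version implicitly shows a bit more, namely that the homotopy classes of loops with combinatorial image $\partial m$ form a full coset of $\mathbb{Z}^2$, so the contractible one is reachable from any given lift by modifying the moves --- though only bare existence is used downstream in Theorem \ref{thm:dxcontractible-pi1}. Both arguments rest on the same local normal form of $X^{\mathbb{R}}$ near the torus over a triple point (the real oriented blowup of the coordinate cross, identified with a neighborhood of the core torus in $\{xyz=c\}$), which the paper also asserts without proof and which you correctly flag as the only substantive verification.
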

\begin{proof}
It is a local check, and the local model is a real oriented blowup of $\mathbb{C}^3$ in the coordinate cross $xyz = 0$. It is easy to see explicitly: the preimage of the point $0$ is a $2$-dimensional torus, and moves from the components $x=0$ to component $y=0$ are unique up to adding the generator $(1;0)$ of its first homology. Similarly, moves from $y =0$ to $z=0$ are unique up to adding another generator, and third move is unique up to adding $(-1;-1)$. That freedom allows to kill any element of the first homology, even in multiple ways.
\end{proof}

Now, we are ready to deduce the theorem.

\begin{proof}[Proof of the Theorem \ref{thm:dxcontractible-pi1}]
By Lemma \ref{lem:elementary-move} we can pick such $\sigma_i$'s in the of the proof of the Lemma \ref{lem:global-lift} that they are contractible.
\end{proof}

\begin{corollary}[of Thm.\ref{thm:dxcontractible-pi1} and Lem.\ref{lem:pi1=0}]

\[\pi_1(Y^{\varepsilon}) = 0\]

\end{corollary}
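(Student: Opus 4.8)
The plan is to obtain the statement as an immediate consequence of Theorem~\ref{thm:dxcontractible-pi1}, applied to the family $\tilde{Y}$ constructed in the previous sections; the only work is to check that the two hypotheses of that theorem hold for $\tilde{Y}$. First, one needs $\pi_1(\Delta_Y) = 0$. By construction (the Example section, with the gluing $\varphi$ fixed as in \ref{eq:phi-def}, and Corollary~\ref{cor:epicwin}) the dual complex $\Delta_Y$ is homeomorphic to the topological duncehat, which is contractible; in particular its fundamental group vanishes. Here it is worth pausing to confirm that we are genuinely in the ``right case'' and not in the ``wrong case'', whose triangulated space has $\pi_1 = \mathbb{Z}/3\mathbb{Z}$ — but this is exactly what the choice of $\varphi$ in equation \ref{eq:phi-def} and the subsequent branch-tracing discussion guarantee. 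Second, one needs every connected component of $Y \setminus \Sing(Y)$ to be simply connected. Since the special fiber $Y$ of the Main Construct is irreducible, $Y \setminus \Sing(Y)$ is connected, and $\pi_1(Y \setminus \Sing(Y)) = 0$ by Lemma~\ref{lem:pi1=0}.

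With both hypotheses verified, Theorem~\ref{thm:dxcontractible-pi1} gives at once $\pi_1(Y^{\mathbb{R}}) = \pi_1(Y^{\varepsilon}) = 0$, where $Y^{\varepsilon}$ is a generic fiber and $Y^{\mathbb{R}}$ the combinatorial model furnished by the real-oriented-blowup description. So the corollary itself presents no genuine obstacle: all the substance is upstream. The two nontrivial inputs are Lemma~\ref{lem:pi1=0} — which combines the abelianity of the fundamental group of the complement of a nodal plane curve (Zariski--Deligne--Fulton, invoked here via the Severi-problem reference) with the facts that blow-ups only impose relations and that the extra blow-up at $b$ trivializes the meridian of $P$ while a blow-up at a point of $P\cap Q$ identifies the meridians of $P$ and $Q$ — and Theorem~\ref{thm:dxcontractible-pi1} itself, whose proof decomposes an arbitrary transverse loop in $X^{\mathbb{R}}$ into lassos around individual triangles (Lemmas~\ref{lem:cancel}--\ref{lem:global-lift}) and then contracts each such lasso using the local torus computation of Lemma~\ref{lem:elementary-move}. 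Thus the only thing one should double-check while writing the corollary is the identification of $\Delta_Y$ with the duncehat; everything else is a direct citation.
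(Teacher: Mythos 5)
Your proposal is correct and follows exactly the route the paper intends: the corollary is an immediate application of Theorem \ref{thm:dxcontractible-pi1} with its two hypotheses supplied by the identification of $\Delta_Y$ with the (contractible, hence simply connected) duncehat and by Lemma \ref{lem:pi1=0} for the unique component $Y \setminus \Sing(Y)$. The paper gives no further argument beyond this citation, so your explicit verification of the hypotheses matches, and indeed slightly elaborates, the paper's own reasoning.
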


\section{Questions}

As we have already mentioned, these results are mostly proof of concept - they show that such beasts as families with non-collapsible, contractible dual complex exist, and probably abundant. Yet, the methods and our understanding of how to construct these is still lacking. Identifying example constructed in this way with some known surfaces also doesn't seem too easy.

However, we take the liberty to propose the following 

\begin{conjecture} $Y^{\varepsilon}$ lies in the deformation class of Barlow's surface.
\end{conjecture}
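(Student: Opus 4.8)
The plan is to situate $Y^{\varepsilon}$ inside the moduli of numerical Godeaux surfaces and then to connect it to Barlow's surface, either through structure theory or through a common degeneration. By Lemma~\ref{lem:general} and Theorem~\ref{thm:minimal}, together with the preceding computation $\chi(Y^{\varepsilon}) = c_2 = 11$ and the vanishing of $\pi_1(Y^{\varepsilon})$, the surface $Y^{\varepsilon}$ is a minimal surface of general type with $\chi(\mathcal{O}_{Y^{\varepsilon}}) = 1$, hence $K_{Y^{\varepsilon}}^2 = 12 - c_2 = 1$, with $p_g = q = 0$ and $\pi_1 = 0$; that is, $Y^{\varepsilon}$ is a simply connected numerical Godeaux surface. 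Both $Y^{\varepsilon}$ and Barlow's surface \cite{B} therefore define points of the Gieseker moduli space $\mathfrak{M}_{1,1}$ of canonical models of surfaces with $K^2 = \chi = 1$, and the conjecture is precisely that these two points lie in the same connected component. Since it is not currently known that the simply connected locus of $\mathfrak{M}_{1,1}$ is connected, this is genuinely stronger than what has already been established.

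First route: structure theory. Since $p_g = 0$, Riemann--Roch and Mumford vanishing give the canonical Hilbert function $h^0(nK_{Y^{\varepsilon}}) = 1, 0, 2, 4, 7, 11, \dots$ for $n = 0, 1, 2, \dots$, so the first nonconstant pluricanonical map is the bicanonical pencil $\varphi_{|2K|} \colon Y^{\varepsilon} \dashrightarrow \mathbb{P}^1$, while $|3K|$ maps to $\mathbb{P}^3$. The plan is to describe the bicanonical pencil of $Y^{\varepsilon}$ explicitly --- its base locus, the genus of a general fibre, and the number and type of multiple and reducible fibres --- together with the low-degree part of the canonical ring, and to compare this data with the corresponding (known) data for Barlow's surface. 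The degeneration $\tilde{Y} \to D$ makes this tractable: $K_{\tilde{Y}/D}$ restricts to $K_X$ on the central fibre and, as computed in the proof of Theorem~\ref{thm:minimal}, $\nu_Y^{*}K_X = K_{\hat{Y}}(P+Q) = \pi^{*}\mathcal{O}(3) - E_1 - \dots - E_8$, so each $|nK_{Y^{\varepsilon}}|$ degenerates to an explicit linear system of plane curves through the eight base points, from which the invariants above can be read off. To promote a matching of invariants to a proof one would invoke the Catanese--Pignatelli description of the moduli of numerical Godeaux surfaces via the relative canonical algebra of the bicanonical fibration: if the stratum of $\mathfrak{M}_{1,1}$ carrying this bicanonical data is irreducible and contains Barlow's surface, the conjecture follows.

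Second route: a common degeneration. The family $\tilde{Y}$ realises $Y^{\varepsilon}$ as a smoothing of a point of $O^{-1}(\mathcal{O}_{\Sing(X)}) \subset M_X$, a fibre which the previous section shows is smooth, hence locally irreducible, so that smoothings coming from a connected piece of it are deformation equivalent. One would then try to show that Barlow's surface --- or one of the simply connected numerical Godeaux surfaces produced by $\mathbb{Q}$-Gorenstein smoothing in \cite{LYP}, which the introduction already flags as the likely bridge --- admits a degeneration whose semistable (or normal crossing) model has dual complex again the duncehat and corresponds to a point of $M_X$ lying in the same connected component as ours. Concretely: take the $\mathbb{Q}$-Gorenstein degeneration of such a Lee--Park surface, resolve it to a normal crossing family, compute the dual complex, and --- if it is the duncehat --- identify the resulting construct with one in the connected component of the construct built here; the conjecture would then reduce to the (plausibly more accessible) identification of some explicit Lee--Park surface with a Barlow surface.

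The main obstacle, shared by both routes, is that no independent degeneration of Barlow's surface, nor a direct deformation link to one of the $\mathbb{Q}$-Gorenstein constructions, is presently available. The structure-theoretic route therefore needs a connectedness statement for a stratum of the Godeaux moduli space that is not known in the required generality, while the geometric route requires building such a degeneration from scratch and matching its combinatorics with $M_X$ --- in effect, supplying a degeneration-theoretic description of Barlow's surface that the literature does not yet contain. The latter appears the more promising, but it is also where essentially all the work lies.
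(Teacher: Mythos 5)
The statement you are addressing is labelled a conjecture in the paper, and the paper offers no proof of it: the author's entire stated justification is that Barlow's surface is the only known simply connected minimal surface of general type with $p_g=q=0$ and $K^2=1$, so the identification is a guess by elimination. Your proposal is therefore not being measured against a hidden argument --- there is none --- and the honest conclusion is that what you have written is a research program, not a proof. The part of your text that is actually established (that $Y^{\varepsilon}$ is a simply connected numerical Godeaux surface, via $c_2=11$, Noether's formula, Lemma~\ref{lem:general}, Theorem~\ref{thm:minimal}, and the vanishing of $\pi_1$) is precisely what the paper itself establishes and uses as motivation; everything beyond that in both of your routes terminates, as you yourself say, in an open problem. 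In the first route the missing ingredient is a connectedness or irreducibility statement for the relevant stratum of the moduli space of numerical Godeaux surfaces, which is not known; in the second route the missing ingredient is an explicit normal-crossing degeneration of Barlow's surface (or of a Lee--Park surface identified with it) landing in the same component of $M_X$, which does not exist in the literature. Neither gap is filled, so the conjecture remains a conjecture.

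One smaller caution about the second route: smoothness and connectedness of the fibre $O^{-1}(\mathcal{O}_{\Sing(X)})$ only controls deformations of the \emph{construct}; to conclude that the resulting smooth fibres are deformation equivalent you still need the smoothing families over a connected base of constructs to fit into a single connected family of smooth surfaces, which requires an argument (e.g.\ openness of the smoothing locus in a joint moduli problem) that you have not supplied. This is likely fixable, but as written it is an additional gap rather than a routine remark.
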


The motivation for this is simply that this is the only known simply-connected surface of general type with such numerical invariants.

The two main roads of development are possible from the current point.

One could be an attempt to construct more examples - and figure out what are the possible constraints of the construction. On this path one will probably encounter some kind of ''tropical'' structure on the dual complex, analogous to the integral flat connections on the sphere arising in the mirror symmetry (see \cite{KS}). After the combinatorial part is settled, one will then need to understand the map $O$ much better than we currently do. This, however, will most likely be less of an issue compared to the first one.

The straightforward way of constructing some examples of interesting dual complexes would be taking a look on the constructions of Korean school \cite{LYP} and then resolving the families obtained by them. We didn't investigate in detail but are planning to.

The second road would be attempting to understand if the dual complex $\Delta_X$ of a geometrically maximal degeneration says anything useful about the deformation class of $X^{\varepsilon}$. Following this road, one would try to perform some elementary equivalences on $\Delta_X$ which would correspond to changes in topology while moving along the boundary of the moduli space of surfaces. Then, this ''simple type'' of the complex will be an invariant of the deformation class of $X^{\varepsilon}$.

The chances of this second program succeeding are rather slim - not only the boundary of the moduli space might not be connected, we can not currently prove the existence of even one geometrically maximal degeneration. We, however, would like to put it as a conjecture - not supported by much but a wishful thinking, but here it is:

\begin{conjecture}
Suppose we are given a \textbf{simply-connected} surface with $h^{1,0} = h^{2,0}$. Then, it admits a geometrically maximal degeneration, moreover, space of such degenerate surfaces is connected.
\end{conjecture}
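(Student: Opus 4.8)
The plan is to split the conjecture into an \emph{existence} part --- that a simply-connected surface $S$ with $h^{1,0}(S)=h^{2,0}(S)=0$ admits \emph{some} geometrically maximal degeneration --- and a \emph{connectedness} part, and to attack each through the boundary of a moduli space containing $[S]$. Replacing $S$ by its minimal model and invoking the classification of surfaces, $S$ is either rational or minimal of general type, and in either case $\chi(\mathcal{O}_S)=1$. One works inside a connected component $\mathcal{M}$ of the relevant compactified moduli space containing $[S]$ --- a Koll\'ar--Shepherd-Barron--Alexeev space of stable surfaces in the general-type case, a component of a suitable Hilbert scheme in the rational case --- so that $\partial\mathcal{M}$ parametrises the degenerations. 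The task is then to locate a point of $\partial\mathcal{M}$ whose semistable normal-crossing model $X$ has all strata with rational normalisation, and afterwards to show the locus of such $X$ is connected.

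\textbf{Existence.} Starting from $[S]$ I would run a ``degeneration MMP'': repeatedly pass to a one-parameter family with central fibre deeper inside $\partial\mathcal{M}$ and total space as mild as possible, perform semistable reduction (invoking \cite{dFKX} to keep the homeomorphism type of the dual complex under control), and inspect the resulting normal-crossing model. Progress is measured by a complexity invariant such as $\sum_i g(\hat{X}_i) + \sum_j g(\hat{C}_j)$, the total geometric genus of the surface components and double curves of $X$, which vanishes precisely when $X$ is geometrically maximal. The input that should force termination is that $\chi(\mathcal{O})$ is locally constant in flat families and is assembled from the components of $X$ (as in the resolution used to show that trivial rational cohomology of $\Delta_X$ forces $h^{*,0}(X^\varepsilon)=0$), which a priori bounds the genera that can occur among the $\hat{X}_i$ and among the double curves; together with properness of $\mathcal{M}$ this should drive the complexity down until every stratum is rational. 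Once such an $X$ is found, one invokes the machinery of Section 3 --- Corollary \ref{cor:moduli-smooth}, or its improvement Theorem \ref{thm:cool-smooth} --- to confirm that $S$ is recovered as a generic fibre.

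\textbf{Connectedness.} Given two geometrically maximal boundary points, join them by a path in $\partial\mathcal{M}$ and track $\Delta_X$ along it: generically $\Delta_X$ is constant, and across walls it changes by bistellar-type elementary moves (the ``elementary equivalences'' discussed in Section 7). One must check that geometric maximality is preserved across such a move, and that any two geometrically maximal points are linked by a chain of them --- equivalently, that the simple-homotopy type of $\Delta_X$ is the only discrete invariant carried along $\partial\mathcal{M}$. This is, for degenerations rather than for birational maps, the analogue of a Sarkisov-type factorisation statement.

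\textbf{Main obstacle.} The hard step is Existence. Nothing currently available --- neither this paper nor the literature --- produces even a single geometrically maximal degeneration of an arbitrary surface; the duncehat example here is found by hand. A genuine proof would require either a classification-free boundedness statement ensuring that every point of $\partial\mathcal{M}$ can be degenerated further to a geometrically maximal one, or an explicit combinatorial recipe realising every admissible dual complex --- the conjectural ``tropical'' structure alluded to in Section 7 (cf.\ \cite{KS}). I expect the realistically provable theorem to be considerably weaker: that every minimal surface of general type with $p_g=q=0$ lying in one of the known deformation classes (cf.\ \cite{LYP}, \cite{B}) admits a geometrically maximal degeneration, proved class by class with the explicit smoothing machinery of this paper exactly as the duncehat is produced here.
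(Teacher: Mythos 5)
The statement you were given is Conjecture 2 of the paper's final section, which the author explicitly labels as ``not supported by much but a wishful thinking''; the paper contains no proof of it, and in fact states outright that the existence of even one geometrically maximal degeneration of a given surface cannot currently be established. (The hypothesis as printed, $h^{1,0}=h^{2,0}$, is evidently a typo for $h^{1,0}=h^{2,0}=0$; you read it that way, which is correct.) So there is no ``paper's proof'' to compare against, and your proposal should be judged on its own terms: it is a research program, not a proof, and you say so yourself. Your diagnosis of where the difficulty lies matches the author's own discussion in Section 7 almost exactly.

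The concrete gap is in the Existence step. The ``degeneration MMP'' driven by the complexity invariant $\sum_i g(\hat{X}_i)+\sum_j g(\hat{C}_j)$ has no termination mechanism: constancy of $\chi(\mathcal{O})$ in flat families constrains only an alternating sum over the strata (via the resolution used in Section 2 to compute $h^{*,0}(X^{\varepsilon})$), not the individual genera of components or double curves, so it does not bound your invariant from above, let alone force it to decrease under further degeneration. No operation on a boundary point of a KSBA space or Hilbert scheme is known to produce a deeper degeneration with strictly smaller total stratum genus, so the induction never gets off the ground; this is precisely the obstruction the author acknowledges. The Connectedness step inherits the same problem and additionally relies on wall-crossing ``elementary equivalences'' of dual complexes that the paper explicitly presents as unestablished speculation (it even notes the boundary of moduli need not be connected). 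Your closing paragraph, proposing a class-by-class weaker theorem via the explicit smoothing machinery of Sections 3--5, is the realistic path and is consistent with how the paper's single example is actually produced.
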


\newpage

\end{document}